\documentclass{amsart}
\usepackage[margin=3cm]{geometry}
\usepackage{amsmath,amssymb,amsfonts}
\usepackage{amsthm}
\usepackage{mathrsfs}
\usepackage{mathtools}
\usepackage{xcolor}
\usepackage{soul}
\usepackage{cite}
\usepackage{lineno}
\usepackage{graphicx, epsfig}
\usepackage{subcaption}

\usepackage{booktabs}
\usepackage[hidelinks]{hyperref}
\usepackage{setspace}
\theoremstyle{plain}
\newtheorem{theorem}{Theorem}[section]
\newtheorem{proposition}[theorem]{Proposition}
\newtheorem{lemma}[theorem]{Lemma}
\newtheorem{corollary}[theorem]{Corollary}

\theoremstyle{definition}

\theoremstyle{remark}
\newtheorem{remark}[theorem]{Remark}

\graphicspath{{figures/}}
\makeatletter
\def\input@path{{sections/}}
\makeatother

\title{Shape Optimization for the Principal Eigenvalue of the Pucci Operator in Three Dimensions}

\author[Mohan Mallick]{Mohan Mallick}
\address{Department of Mathematics, Visvesvaraya National Institute of Technology, Nagpur, India}
\email{mohanmallick@mth.vnit.ac.in}
\author[Ram Baran Verma]{*Ram Baran Verma}
\address{Department of Mathematics, SRM University AP, Amaravati, India}
\email{rambaran.v@srmap.edu.in}

\date{Version 2.0 — \today}
\begin{document}

\begin{abstract}
We investigate shape optimization for the principal eigenvalue of the Pucci extremal operator
\[
\left\{
\begin{aligned}
-\mathcal{M}^+_{\lambda,\Lambda}(D^{2}u)&=\mu^{+}_{1}(\Omega)u &&\text{in }\Omega,\\
u &=0 &&\text{on }\partial\Omega,
\end{aligned}
\right.
\]
in dimension three. Since $\mathcal{M}^+_{\lambda,\Lambda}$ is fully nonlinear, in non-divergence form, and non-variational, classical symmetrization and rearrangement methods do not apply.

We introduce a three-dimensional family of double--pyramidal domains $\{\Omega^\omega_{\gamma,a}\}$ parametrized by an anisotropy factor
\(
\gamma\in
\left[
\frac{2}{\sqrt\omega},
\frac{\sqrt\omega}{2}
\right]
\)
and an affine shear parameter \(a\in(-\pi,\pi)\), under the ellipticity
condition
\(
\omega=\frac{\Lambda}{\lambda}\geq 4.
\) Within this family and under a fixed-volume constraint, we prove that the scale-invariant principal eigenvalue is uniquely minimized at the symmetric unsheared configuration $(\gamma,a)=(1,0)$ among domains in the family $\{\Omega^\omega_{\gamma,a}\}$.

The proof combines an explicit construction of positive eigenfunctions on seven patches with a lower bound under affine shear deformations. Using the homogeneity and orthogonal invariance of the Pucci operator, we identify an involutive symmetry $\gamma\mapsto \gamma^{-1}$ in the associated volume functional and establish strict monotonicity away from the self-dual point $\gamma=1$. In particular, for $\omega\geq 4$, any nontrivial anisotropy or shear strictly increases the normalized principal eigenvalue.

This reveals a genuinely three-dimensional rigidity mechanism for a fully nonlinear spectral problem and extends to dimension three the symmetry-minimization phenomenon previously known in the planar case.
\end{abstract}

\maketitle


\noindent\textbf{Keywords:}
Shape optimization; Pucci extremal operator; principal half-eigenvalue; fully nonlinear elliptic equations; Pólya--Szegő conjecture

\vspace{0.2cm}
\noindent\textbf{MSC (2020):}
35J60, 35P30, 49Q10

\section{Introduction}

The interaction between the geometry of a domain and the principal eigenvalue of elliptic operators lies at the heart of spectral geometry. A central problem in this direction is the shape optimization question:

\medskip
\emph{Which domains of fixed volume minimize the first Dirichlet eigenvalue?}
\medskip

For the Laplace operator,
\[
\left\{
\begin{aligned}
-\Delta u &= \mu_1(\Omega) u &&\text{in } \Omega,\\
u &> 0 &&\text{in } \Omega,\\
u &= 0 &&\text{on } \partial \Omega,
\end{aligned}
\right.
\]
the celebrated Rayleigh–Faber–Krahn inequality asserts that among all bounded open sets $\Omega \subset \mathbb{R}^n$ of prescribed volume, the Euclidean ball uniquely minimizes $\mu_1(\Omega)$ \cite{Rayleigh1877,Henrot2006,Faber1923,Krahn1925}.  

A related conjecture of P\'olya and Szeg\H{o} states that among all $n$--gons of fixed area, the regular $n$--gon minimizes the first eigenvalue \cite{PolyaSzego1951}. This is known for $n=3,4$ via Steiner symmetrization; see \cite{PolyaSzego1951,BogoselBucur2022}. The case $n\ge 5$ remains open in full generality, although substantial progress has recently been made; see \cite{BogoselBucur2022}.

Two structural features underlie these classical results:
\begin{enumerate}
\item the variational characterization of $\mu_1(\Omega)$ through the Rayleigh quotient;
\item the compatibility of the Laplacian with symmetrization and rearrangement techniques.
\end{enumerate}
In this setting, symmetry-minimization emerges naturally from energy considerations.

\medskip

For fully nonlinear elliptic operators in non-divergence form, however, neither variational structure nor symmetrization methods are available. In this work, we study the principal eigenvalue associated with the Pucci extremal operator
\begin{equation}\label{eigen}
\left\{
\begin{aligned}
-\mathcal{M}^+_{\lambda,\Lambda}(D^2 u)
&= \mu_1^+(\Omega) u &&\text{in } \Omega,\\
u &> 0 &&\text{in } \Omega,\\
u &= 0 &&\text{on } \partial \Omega,
\end{aligned}
\right.
\end{equation}
where $0<\lambda\le\Lambda$ and $\omega=\Lambda/\lambda\ge1$.
The operator $\mathcal{M}^+_{\lambda,\Lambda}$ is fully nonlinear, positively homogeneous of degree one, and non-variational. The principal half–eigenvalues $\mu_1^+(\Omega)$ and $\mu_1^-(\Omega)$ are defined in a viscosity sense \cite{QuaasSirakov2008} via the maximum principle framework of Berestycki–Nirenberg–Varadhan \cite{BerestyckiNirenbergVaradhan1994}; in general $\mu_1^+(\Omega) < \mu_1^-(\Omega)$ when $\lambda\neq\Lambda$.

The absence of a Rayleigh quotient makes symmetry-minimization for $\mu_1^+$ a genuinely structural question. Even the basic separation-of-variables mechanism fails: eigenfunctions of $\mathcal{M}^+_{\lambda,\Lambda}$ on cubes are not separable unless $\lambda=\Lambda$ (see Theorem~\ref{thm:nonsep}). Thus geometry must replace variational structure.

\medskip

The first symmetry-minimization result in this non-variational framework was obtained by Birindelli--Leoni \cite{BirindelliLeoni2014} in dimension two. For a family of planar domains parametrized by an asymmetry factor $\gamma \in \left[\frac{1}{\sqrt{\omega}},\sqrt{\omega}\right]$, they constructed explicit positive eigenfunctions and proved that, at fixed area, the principal eigenvalue is minimized in the symmetric configuration $\gamma=1$. 


\medskip

Their construction uses a geometric feature specific to the planar setting. In the isotropic case (\(\omega=1\)), the admissible planar domain is a rotated square, and hence one recovers a separable Laplacian configuration after rotation.

In dimension three, this simplification is no longer available. The symmetric geometry has a genuinely double-pyramidal character and cannot be reduced, by an orthogonal change of variables, to a cubical separable configuration. Moreover, since the Pucci operator is invariant under orthogonal transformations but not under general linear changes of variables, an arbitrary affine reduction is not compatible with the operator. Thus the three-dimensional extension is not a routine dimensional analogue of the planar construction. The geometry must be designed so that the sign structure of the Hessian eigenvalues can be controlled patch by patch while preserving \(C^1\) compatibility across interfaces.

\medskip


Even for the Laplacian, symmetry-minimization phenomena in dimension three are much less understood in polyhedral classes. While the ball minimizes the first Dirichlet eigenvalue among all domains of fixed volume \cite{Faber1923,Krahn1925,Henrot2006}, analogues of the P\'0lya--Szeg\H{o} conjecture for restricted polygonal or polyhedral families require separate arguments. Symmetry is immediate only in special classes, such as axis-aligned boxes, where the cube minimizes the first Dirichlet eigenvalue by separation of variables and the arithmetic--geometric mean inequality.

Against this backdrop, establishing symmetry-minimization for a fully nonlinear operator in three dimensions requires a different rigidity mechanism.

\medskip

\noindent\textbf{Main Result.}
We introduce a three-dimensional family of double--pyramidal domains $\Omega^\omega_{\gamma,a}\subset\mathbb{R}^3$, parametrized by:
\begin{enumerate}
\item an anisotropy parameter $\gamma\in\left[\frac{2}{\sqrt{\omega}},\frac{\sqrt{\omega}}{2}\right]$ controlling stretching,
\item a shear parameter $a\in(-\pi,\pi)$ arising from affine deformation,
\item the ellipticity ratio $\omega=\Lambda/\lambda\geq 4$.
\end{enumerate}

We emphasize that the optimization problem considered here is within the natural two-parameter family $\{\Omega^\omega_{\gamma,a}\}$, rather than over all domains in $\mathbb{R}^3$.

\begin{theorem}[Symmetry and shear rigidity]
Fix \(\omega\geq 4\) and a target volume \(V>0\). For each admissible pair
\[
\gamma\in
\left[
\frac{2}{\sqrt{\omega}},
\frac{\sqrt{\omega}}{2}
\right],
\qquad
a\in(-\pi,\pi),
\]
let \(\widetilde{\Omega}^{\omega}_{\gamma,a}\) denote the scaling of
\(\Omega^{\omega}_{\gamma,a}\) to volume \(V\). Then the principal positive
half-eigenvalue
\[
\mu_1^+(\widetilde{\Omega}^{\omega}_{\gamma,a})
\]
is uniquely minimized at the symmetric unsheared configuration
\((\gamma,a)=(1,0)\).

Equivalently, the scale-invariant quantity
\[
|\Omega^{\omega}_{\gamma,a}|^{2/3}
\mu_1^+(\Omega^{\omega}_{\gamma,a})
\]
is uniquely minimized at \((\gamma,a)=(1,0)\).
\end{theorem}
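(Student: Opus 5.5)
The proof splits along the two parameters: first the anisotropy $\gamma$ in the unsheared family $a=0$, then the shear. Both rest on two facts. First, $\mathcal{M}^+_{\lambda,\Lambda}$ is positively $1$-homogeneous and invariant under orthogonal changes of variables. Second, the principal half-eigenvalue $\mu_1^+$ is characterized in the Berestycki--Nirenberg--Varadhan/Quaas--Sirakov framework \cite{QuaasSirakov2008}. In particular, any positive viscosity solution vanishing on $\partial\Omega$ is the principal eigenfunction and determines $\mu_1^+$ uniquely. Also, $\mu_1^+(t\Omega)=t^{-2}\mu_1^+(\Omega)$, so the quantity $|\Omega|^{2/3}\mu_1^+(\Omega)$ is indeed scale invariant and the two formulations in the statement are equivalent.

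\emph{Step 1 (explicit eigenfunctions, $a=0$).} On $\Omega^\omega_{\gamma,0}$ I would build a positive function $u_\gamma$ from seven patches. Each patch is chosen so that the Hessian eigenvalues have a fixed sign pattern there. Then $-\mathcal{M}^+(D^2u)$ reduces on that patch to a linear constant-coefficient operator $-\Lambda\sum_{e_i>0}e_i-\lambda\sum_{e_i<0}e_i$, which can be solved by trigonometric/product ansatz with a common eigenvalue $\mu$. The admissibility window $\gamma\in[2/\sqrt\omega,\sqrt\omega/2]$ together with $\omega\ge 4$ is what keeps these sign patterns consistent. Next I would check $C^1$ matching across the interfaces. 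Since the operator is continuous and the pieces are $C^2$ up to the interfaces with continuous gradients, the glued function is a viscosity solution; at kinks a test-function argument handles the one-sided second derivatives. This gives $\mu_1^+(\Omega^\omega_{\gamma,0})$ in closed form, say $\mu(\gamma)=c_\omega\,\phi(\gamma)$, together with an explicit volume $|\Omega^\omega_{\gamma,0}|=v(\gamma)$.

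\emph{Step 2 (anisotropy).} Set $F(\gamma)=v(\gamma)^{2/3}\mu(\gamma)$. Interchanging the roles of the stretched axes and rescaling by an orthogonal map plus a dilation gives the identity $F(\gamma)=F(\gamma^{-1})$. This is the involutive symmetry: $\Omega^\omega_{\gamma^{-1},0}$ is a rotated dilate of $\Omega^\omega_{\gamma,0}$. It is therefore enough to prove that $F$ is strictly increasing on $[1,\sqrt\omega/2]$. I would pass to $s=\log\gamma$, so that $F$ becomes even in $s$, and show $\frac{d}{ds}\log F>0$ for $s>0$ by a direct computation, using $\omega\ge4$ to control the signs of the terms.

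\emph{Step 3 (shear).} The sheared domain $\Omega^\omega_{\gamma,a}=A_a\Omega^\omega_{\gamma,0}$ is an affine, volume-preserving image, but the operator is not invariant under $A_a$. Here I would prove a lower bound $\mu_1^+(\Omega^\omega_{\gamma,a})\ge\mu_1^+(\Omega^\omega_{\gamma,0})$, with strict inequality for $a\neq0$. The natural route is the characterization
\[
\mu_1^+(\Omega)=\sup\{\mu:\ \exists\, \phi>0 \text{ in } \Omega,\ -\mathcal{M}^+(D^2\phi)\ge \mu\phi\}.
\]
Using it, I would transport the sheared eigenfunction back by $A_a^{-1}$. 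Since $D^2(v\circ A_a)=A_a^TD^2v\,A_a$, I would then compare $\mathcal{M}^+(A_a^TXA_a)$ with $\mathcal{M}^+(X)$ via the eigenvalue interlacing and the sign structure established in Step 1. The goal is to produce a strict positive supersolution on the unsheared domain, which yields the inequality. Strictness should come from the shear mixing a positive and a negative Hessian direction, which costs a factor $(\Lambda-\lambda)>0$.

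Combining Steps 2 and 3 gives uniqueness of the minimizer at $(1,0)$. I expect Step 3 to be the main obstacle, since it requires a quantitative matrix inequality for $\mathcal{M}^+$ under nonorthogonal congruence. The monotonicity computation in Step 2 is the secondary difficulty.
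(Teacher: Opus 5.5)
There are genuine gaps in both Step 2 and Step 3.

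\textbf{Step 2: the involution does not exist for this family.} On every patch the paper's eigenfunction has the form $\gamma g(x)+\gamma g(y)+g(z)$ for one fixed one-variable profile $g$; on the cube this is $\gamma\cos x+\gamma\cos y+\cos z$. The domain $\Omega^\omega_\gamma$ is its positivity set. If you replace $\gamma$ by $\gamma^{-1}$ and multiply by $\gamma$, the weights become $(1,1,\gamma)$. This is not a permutation of $(\gamma,\gamma,1)$ unless $\gamma=1$, so the planar swap $(\gamma,1)\leftrightarrow(1,\gamma)$ of Birindelli--Leoni has no analogue here.

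Worse, the construction gives $\mu_1^+(\Omega^\omega_\gamma)=\lambda$ for every $\gamma\in\Gamma_\omega$, a fact your outline misses. A rotated dilate would therefore have to be congruent, forcing $V(\gamma)=V(\gamma^{-1})$. That is false: for example, as $\omega\to\infty$ the bridge volume $2\mathcal{B}_\gamma+\mathcal{E}_\gamma$ tends to $\frac{\pi}{8}(2\gamma+\gamma^{-2})$. Hence $F(\gamma)=\lambda V(\gamma)^{2/3}$ is not even in $\log\gamma$, and monotonicity on $[1,\sqrt\omega/2]$ says nothing about $[2/\sqrt\omega,1]$.

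You must prove the sign of $V'$ separately on both sides of $1$, and this is exactly what you leave as ``a direct computation''. The paper differentiates the cap and bridge volumes under the integral sign and compares integrands pointwise. It compares $Z'_\gamma$ with $-2X'_\gamma$ using $c+\gamma^{-1}d\le c+d$, and $2\mathcal{B}'_\gamma$ with $-\mathcal{E}'_\gamma$ using $P_\gamma\le A\le Q_\gamma$. All these inequalities reverse for $\gamma<1$.

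\textbf{Step 3: the shear is not volume-preserving.} In the paper $\Omega^\omega_{\gamma,a}=C_a(\Omega^\omega_\gamma)$ (your $A_a$), and $\det C_a=s(a)^2=1-a^2/\pi^2$: $C_a$ is a shear composed with a compression by $s(a)$ in the $x$- and $z$-directions. So your target $\mu_1^+(\Omega^\omega_{\gamma,a})>\mu_1^+(\Omega^\omega_\gamma)=\lambda$ is too weak. After normalization it only gives the lower bound $(1-a^2/\pi^2)^{2/3}\lambda|\Omega^\omega_\gamma|^{2/3}$, which lies below the unsheared value. What you need is the quantitative bound $\mu_1^+(\Omega^\omega_{\gamma,a})\ge\kappa(a)^2\lambda=\lambda\pi^2/(\pi^2-a^2)$. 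This overcompensates the volume loss and leaves the factor $(1-a^2/\pi^2)^{-1/3}>1$, so strictness in $a$ comes from this factor, not from the mixing of Hessian directions.

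Your mechanism also points the wrong way. A positive supersolution on the unsheared domain only bounds $\mu_1^+(\Omega^\omega_\gamma)$ from below. Moreover, the unknown eigenfunction of the sheared domain has no known Hessian sign pattern, so the sign structure from Step 1 cannot be applied to it.

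The paper instead pushes the explicit eigenfunction forward, $u^\omega_{\gamma,a}=u^\omega_\gamma\circ C_a^{-1}$, and shows it is a positive supersolution on the sheared domain. The matrix $C_a^{-T}\operatorname{diag}(p,q,r)C_a^{-1}$ has third eigenvalue $\kappa^2 r$, and its upper $2\times2$ block has trace $\kappa^2(p+q)$ and determinant $\kappa^2pq$. Where $p,q$ have the same sign this gives $-\mathcal{M}^+_{\lambda,\Lambda}(D^2u^\omega_{\gamma,a})=\kappa^2\lambda u^\omega_{\gamma,a}$ exactly. Where they have opposite signs, $\sqrt{p^2+q^2+2\beta(a)pq}\le|p-q|$ gives the inequality $\ge$. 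The supersolution characterization of $\mu_1^+$ then yields the bound.
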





\medskip

The proof rests on four structural ingredients:
\begin{enumerate}
\item an explicit construction of positive eigenfunctions on a seven–patch partition of a fundamental octant;
\item a $C^1$ gluing mechanism ensuring global viscosity solutions;
\item a sharp spectral lower bound under affine shear deformations;
\item the symmetry $\gamma\mapsto\gamma^{-1}$ in the associated volume functional, yielding strict monotonicity away from $\gamma=1$.
\end{enumerate}

Together, these mechanisms show a three-dimensional rigidity phenomenon:
within the admissible range \(\omega\geq4\), any nontrivial anisotropic
stretching or shear strictly increases the normalized principal eigenvalue.

\medskip
\noindent\textbf{Organization of the paper.}
Section 2 recalls preliminaries on Pucci operators and principal half-eigenvalues.
Section 3 introduces the admissible double--pyramidal domains.
Section 4 constructs the explicit eigenfunctions and proves the identification of the principal eigenpair on the unsheared domains.
Section 5 establishes the spectral lower bound under shear and proves the minimization result within the family.
Technical Hessian computations and the volume monotonicity argument are collected in Appendix~A.

\section{Preliminaries}\label{sec:preliminaries}

In this section we recall the structural properties of the Pucci extremal operator and the principal half-eigenvalues that will be used throughout the paper.

\subsection{Pucci Extremal Operator}

Let $0<\lambda\le\Lambda$ and denote by $e_1,\dots,e_n$ the eigenvalues of a symmetric matrix $X\in\mathcal{S}^n$. The Pucci extremal operators are defined by
\begin{equation}\label{pucci}
\mathcal{M}^+_{\lambda,\Lambda}(X)
= \Lambda \sum_{e_i>0} e_i + \lambda \sum_{e_i<0} e_i,
\qquad
\mathcal{M}^-_{\lambda,\Lambda}(X)
= \lambda \sum_{e_i>0} e_i + \Lambda \sum_{e_i<0} e_i.
\end{equation}

The operator $\mathcal{M}^+_{\lambda,\Lambda}$ is fully nonlinear, uniformly elliptic, and positively homogeneous of degree one. We denote the ellipticity ratio by
\[
\omega = \frac{\Lambda}{\lambda} \ge 1.
\]
If $\lambda=\Lambda$, then $\mathcal{M}^+_{\lambda,\Lambda}(X) = \lambda tr(X)$, and hence, when $X=D^2u$, $\mathcal{M}^+_{\lambda,\Lambda}(D^2u)=\lambda\Delta u$.

\subsection{Structural Obstruction: Non-separability}

A fundamental difference with the Laplacian arises when $\Lambda>\lambda$: separation of variables fails even on Cartesian domains. 
\begin{theorem}[Non-separability on cubes]\label{thm:nonsep}
Let $Q=\left(-l,l\right)^n\subset\mathbb{R}^n$ be a cube of side length $2l$ and assume $\Lambda>\lambda>0$. Any positive eigenfunction of $\mathcal{M}^+_{\lambda,\Lambda}$ associated with $\mu_1^+(Q)$ cannot be written as a product of one-variable functions $u(x)=\displaystyle{\prod_{j=1}^{n}f(x_j)}$.
\end{theorem}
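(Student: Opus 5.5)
Suppose, for contradiction, that $u(x)=\prod_{j=1}^n f(x_j)$ is a positive eigenfunction for $\mu:=\mu_1^+(Q)$. By the standard regularity theory for the Pucci operator (and the $C^{2}$ smoothness we may assume for a classical separated solution), each $f$ is positive on $(-l,l)$, $C^2$ there, with $f(\pm l)=0$. Moreover, $u$ is invariant under the symmetries of $Q$, because the positive principal eigenfunction is unique up to scaling and $\mathcal{M}^+_{\lambda,\Lambda}$ is invariant under orthogonal transformations. Hence $f$ is even, so $f'(0)=0$, and positivity forces $f(0)>0$.

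The Hessian of $u$ is $D^2u = u\,(g_ig_j + \delta_{ij}h_i)$, where $g=f'/f$ and $h=(f''/f)-g^2=(\log f)''$, both evaluated at $x_i$. The plan is to evaluate the equation on two special sets where the eigenvalues of $D^2u$ are explicit.

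At the center $x=0$, all $g_i=0$, so $D^2u(0)=f(0)^{n-1}f''(0)\,I$. Since $f$ has an interior maximum at $0$, we have $f''(0)\le 0$, and the equation gives $-n\lambda f''(0)/f(0)=\mu$.

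Along a coordinate axis, $x=(t,0,\dots,0)$, the Hessian is diagonal with entries $f''(t)/f(t)$ and $f''(0)/f(0)$ (repeated $n-1$ times), all multiplied by $u$. Write $c:=-f''(0)/f(0)=\mu/(n\lambda)>0$. The equation becomes
\[
-\mathcal{M}^+\big(\mathrm{diag}(f''(t)/f(t),-c,\dots,-c)\big)=\mu ,
\]
so $(n-1)\lambda c - \mathcal{M}^+_1(f''/f)=n\lambda c$, where $\mathcal{M}^+_1(s)=\Lambda s^+-\lambda s^-$. This gives $\mathcal{M}^+_1(f''(t)/f(t))=-\lambda c<0$. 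Therefore $f''<0$ and $f''=-cf$ on $(-l,l)$. Together with evenness and $f(\pm l)=0$, this yields $f=\cos(\pi x/(2l))$ and $c=\pi^2/(4l^2)$.

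For the contradiction, take an off-axis point, for example $x=(t,t,0,\dots,0)$ with $0<t<l$. There $g=-\sqrt{c}\tan(\sqrt{c}\,t)=:-\tau$ and $h=-c\sec^2(\sqrt c\,t)$. In the $(x_1,x_2)$ block, $D^2u/u=hI+\tau^2\mathbf 1\mathbf 1^T$, whose eigenvalues are $h$ and $h+2\tau^2=-c+\tau^2$. The remaining eigenvalues equal $-c$. When $\tau^2>c$, i.e.\ $\sqrt c\,t>\pi/4$, there is a positive eigenvalue. Then
\[
-\mathcal{M}^+/u=-\lambda h-\Lambda(\tau^2-c)+(n-2)\lambda c .
\]
Using $h=-c-\tau^2$, this equals $n\lambda c+(\lambda-\Lambda)(\tau^2-c)$. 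Since $\Lambda>\lambda$ and $\tau^2>c$, this is strictly less than $n\lambda c=\mu$. The equation therefore fails on an open set, which is a contradiction.

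The main obstacle is the regularity and symmetry justification needed to evaluate the equation pointwise. If $f$ were only a viscosity-sense factor, one would first argue that $u$ is $C^2$, either via interior $C^{2,\alpha}$ regularity for concave/convex Pucci equations (Evans–Krylov), or by observing that the product structure, together with the equation restricted to the axis, forces a one-dimensional ODE for $f$. Evenness could be avoided: without it, the axis computation at the maximum point of $f$ still gives $f''=-cf$, and $f(\pm l)=0$ then again forces the cosine.
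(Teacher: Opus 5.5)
Your proof is correct and follows the same route as the paper's: the equation at the center fixes $f''(0)/f(0)=-\mu/(n\lambda)$, the equation along a coordinate axis forces $f''=-cf$ and hence a cosine profile, and at an off-axis diagonal point the Hessian acquires a positive eigenvalue, weighted by $\Lambda$ rather than $\lambda$, which breaks the equation. The only differences are that you use the point $(t,t,0,\dots,0)$ with $\sqrt{c}\,t>\pi/4$ where the paper uses $(x,\dots,x)$ near the corner, and one small repair applies to both your argument and the paper's: evenness only makes $0$ a critical point of $f$, not a maximum, but $f''(0)<0$ follows directly from the equation at the origin, since $f''(0)\ge 0$ would give $-\mathcal{M}^+_{\lambda,\Lambda}(D^2u(0))\le 0<\mu u(0)$.
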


\noindent
The proof is deferred to Appendix~\ref{appendix:nonsep}. This obstruction forces us to abandon product structures and instead design a geometry compatible with piecewise sign control of Hessian eigenvalues.

\subsection{Principal Half-Eigenvalues}

Since $\mathcal{M}^+_{\lambda,\Lambda}$ is non-variational, the Rayleigh quotient characterization is not available. Following Berestycki–Nirenberg–Varadhan \cite{BerestyckiNirenbergVaradhan1994}, the principal positive half-eigenvalue of $\mathcal{M}^+_{\lambda,\Lambda}$ in a bounded domain $\Omega$ is defined (in the viscosity sense) by
\begin{equation}\label{eq:principal-positive}
\mu_1^+(\Omega)
=
\sup\left\{
\mu\in\mathbb{R} :
\exists\, \phi>0 \text{ in } \Omega,
\ \mathcal{M}^+_{\lambda,\Lambda}(D^2\phi)+\mu\phi\le0
\text{ in } \Omega
\right\}.
\end{equation}

Similarly, the principal negative half-eigenvalue is
\begin{equation}\label{eq:principal-negative}
\mu_1^-(\Omega)
=
\sup\left\{
\mu\in\mathbb{R} :
\exists\, \phi<0 \text{ in } \Omega,
\ \mathcal{M}^+_{\lambda,\Lambda}(D^2\phi)+\mu\phi\ge0
\text{ in } \Omega
\right\}.
\end{equation}

It is known that $\mu_1^+(\Omega)>0$, that a corresponding positive eigenfunction exists and is unique up to positive scalar multiplication, and that $\mu_1^+(\Omega)<\mu_1^-(\Omega)$ when $\lambda\neq\Lambda$; see \cite{QuaasSirakov2008}.

The positive homogeneity of $\mathcal{M}^+_{\lambda,\Lambda}$ implies the scaling law
\begin{equation}\label{eq:scaling}
\mu_1^\pm(t\Omega)=t^{-2}\mu_1^\pm(\Omega),
\qquad t>0,
\end{equation}
which will be used repeatedly in the normalization arguments.

\subsection{A Gluing Principle}

The construction of eigenfunctions relies on patching local viscosity solutions across smooth interfaces.

\begin{lemma}[Gluing principle]\label{lem:gluing}
Let \(\Omega_1,\Omega_2\subset\mathbb R^n\) be open sets with a common
\(C^2\) interface
\[
\Gamma:=\partial\Omega_1\cap\partial\Omega_2,
\]
and assume that
\[
\Omega:=\Omega_1\cup\Omega_2\cup\Gamma
\]
is open. Suppose that
\[
u_i\in C^2(\Omega_i)\cap C^1(\Omega_i\cup\Gamma),
\qquad i=1,2,
\]
satisfy
\[
-\mathcal M^+_{\lambda,\Lambda}(D^2u_i)=\mu u_i
\quad\text{in }\Omega_i,
\]
and assume the compatibility conditions
\[
u_1=u_2,\qquad Du_1=Du_2
\quad\text{on }\Gamma.
\]
Define
\[
u(x)=
\begin{cases}
u_1(x), & x\in\Omega_1,\\
u_2(x), & x\in\Omega_2,\\
u_1(x)=u_2(x), & x\in\Gamma.
\end{cases}
\]
Then \(u\in C^1(\Omega)\) and \(u\) is a viscosity solution of
\[
-\mathcal M^+_{\lambda,\Lambda}(D^2u)=\mu u
\quad\text{in }\Omega.
\]
\end{lemma}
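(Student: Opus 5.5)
The plan is to check the viscosity inequalities directly and to deal with interface points by a perturbation that pushes contact points off \(\Gamma\). This needs only \(u\in C^1(\Omega)\) and the fact that the equation holds classically in each \(\Omega_i\). Nothing about \(D^2u_i\) near \(\Gamma\) is used, since it may be unbounded there.

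\emph{Step 1 (regularity and points away from the interface).} Since \(u_1=u_2\) and \(Du_1=Du_2\) on \(\Gamma\), and each \(u_i\in C^1(\Omega_i\cup\Gamma)\), the glued function \(u\) and its gradient are continuous across \(\Gamma\). Hence \(u\in C^1(\Omega)\). At a point \(x_0\in\Omega_i\), \(u=u_i\) is \(C^2\) near \(x_0\) and solves the equation classically. So \(u\) is both a viscosity sub- and supersolution there, by the usual comparison of second derivatives at a touching point and the degenerate ellipticity of \(\mathcal M^+_{\lambda,\Lambda}\).

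\emph{Step 2 (subsolution test on \(\Gamma\)).} Fix \(x_0\in\Gamma\) and a \(C^2\) function \(\varphi\) with \(u-\varphi\) having a local maximum at \(x_0\). We must show
\[
-\mathcal M^+_{\lambda,\Lambda}(D^2\varphi(x_0))\le \mu u(x_0).
\]
Replacing \(\varphi\) by \(\varphi+|x-x_0|^4\) does not change \(D^2\varphi(x_0)\), so we may assume the maximum is strict, with \(u-\varphi\le -|x-x_0|^4\) on a small ball \(B_r(x_0)\).

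Let \(\rho\) be the signed distance to \(\Gamma\), positive in \(\Omega_1\). Because \(\Gamma\) is \(C^2\), \(\rho\) is \(C^2\) in a neighbourhood of \(x_0\) (Gilbarg--Trudinger, Lemma 14.16), and \(|D\rho|=1\). For \(\delta>0\) set
\[
\psi_\delta=\varphi-\delta|\rho|.
\]
The function \(u-\psi_\delta=(u-\varphi)+\delta|\rho|\) is \(C^1\) plus a convex kink along \(\Gamma\). Its one-sided normal derivatives at any point of \(\Gamma\) jump by \(+2\delta\), so it cannot attain a local maximum on \(\Gamma\).

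Let \(x_\delta\) be a maximum point of \(u-\psi_\delta\) over \(\overline{B_r(x_0)}\). We have \(u-\psi_\delta\le -|x-x_0|^4+\delta r\) and \((u-\psi_\delta)(x_0)=0\). Hence \(|x_\delta-x_0|\le(\delta r)^{1/4}\to0\), so \(x_\delta\) is interior to the ball, and by the kink property \(x_\delta\notin\Gamma\). Thus \(x_\delta\in\Omega_1\) or \(x_\delta\in\Omega_2\). Near \(x_\delta\), \(\psi_\delta=\varphi\mp\delta\rho\) is \(C^2\) and touches \(u_i\) from above. Step 1 then gives
\[
-\mathcal M^+_{\lambda,\Lambda}\bigl(D^2\varphi(x_\delta)\mp\delta D^2\rho(x_\delta)\bigr)\le\mu u(x_\delta).
\]

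\emph{Step 3 (passage to the limit and the supersolution case).} \(D^2\rho\) is bounded near \(x_0\), \(x_\delta\to x_0\), and \(\mathcal M^+_{\lambda,\Lambda}\) is continuous (indeed Lipschitz) on \(\mathcal S^n\). Letting \(\delta\to0\) gives the required inequality at \(x_0\).

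The supersolution test is symmetric. For \(\varphi\) touching \(u\) from below, we use \(\psi_\delta=\varphi+\delta|\rho|\). Then \(u-\psi_\delta\) has a concave kink along \(\Gamma\), so its minima avoid \(\Gamma\), and the same limit argument applies.

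The main obstacle is Step 2: one must make sure the perturbed contact point leaves \(\Gamma\) while the perturbation remains \(C^2\) on each side with bounded Hessian. The kink term \(\mp\delta|\rho|\) handles this using only \(C^1\) matching. This is exactly where the \(C^2\) regularity of \(\Gamma\), through the \(C^2\) regularity of \(\rho\), enters.
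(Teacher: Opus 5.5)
Your proof is correct and follows essentially the same strategy as the paper. Both perturb the test function by a small multiple of the \(C^2\) signed distance to \(\Gamma\) so that the contact point leaves the interface, then apply the classical equation at the new contact point and pass to the limit using the boundedness of the distance function's Hessian near \(x_0\). The only real difference is how the contact point is pushed off \(\Gamma\): the paper uses \(\varphi-\varepsilon d+\sqrt{\varepsilon}|x-x_0|^2\) and forces the maximum into the prescribed side \(\Omega_1\) by exhibiting a point \(x_0+t_\varepsilon\nu\) where \(u_1-\varphi_\varepsilon>0\), whereas your kink \(-\delta|\rho|\) excludes every point of \(\Gamma\) at once through the jump in one-sided normal derivatives, and your quartic term gives the explicit localization \(|x_\delta-x_0|\le(\delta r)^{1/4}\) that the paper only asserts.
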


\begin{proof}
The $C^1$ matching assumptions imply that the piecewise definition
\[
u(x)=
\begin{cases}
u_1(x), & x\in \Omega_1,\\
u_2(x), & x\in \Omega_2,\\
u_1(x)=u_2(x), & x\in \Gamma,
\end{cases}
\]
is well posed and yields a function $u\in C^1(\Omega)$.

Set
\[
F(X,r):=-\mathcal{M}^+_{\lambda,\Lambda}(X)-\mu r.
\]
We show that the equation
\(
F(D^2u,u)=0
\)
holds in the viscosity sense. Away from the interface, $u$ agrees locally with either $u_1$ or $u_2$, and since each $u_i$ is a classical solution in $\Omega_i$, the viscosity property is automatic there. It therefore remains to check the equation at points $x_0\in \Gamma$.



We prove the subsolution property; the supersolution argument is analogous.
Let $\varphi\in C^2(\Omega)$ be such that $u-\varphi$ attains a local maximum
at $x_0\in\Gamma$. We must show that
\[
-\mathcal{M}^+_{\lambda,\Lambda}(D^2\varphi(x_0))
\le \mu\,u(x_0).
\]
Since $u\in C^1(\Omega)$, necessarily
\[
D\varphi(x_0)=Du(x_0).
\]
By replacing $\varphi$ with $\varphi+|x-x_0|^4$, we may assume that
$x_0$ is a strict local maximum of $u-\varphi$.

Since $\Gamma$ is a $C^2$ hypersurface, there exists a signed distance
function $d\in C^2(U)$ in a neighborhood $U$ of $\Gamma$ such that
\[
d=0 \text{ on }\Gamma,\qquad d>0 \text{ in }\Omega_1,\qquad d<0 \text{ in }\Omega_2.
\]
For $\varepsilon>0$, define
\[
\varphi_\varepsilon(x)
:=\varphi(x)-\varepsilon d(x)+\sqrt{\varepsilon}\,|x-x_0|^2.
\]

We claim that, for all sufficiently small \(\varepsilon>0\), the function
\(u_1-\varphi_\varepsilon\) attains a local maximum at some point
\(x_\varepsilon\in\Omega_1\), with \(x_\varepsilon\to x_0\).

Indeed, on \(\Gamma\) we have \(d=0\) and \(u_1=u\), hence
\[
u_1-\varphi_\varepsilon
=
u-\varphi-\sqrt{\varepsilon}|x-x_0|^2\leq 0,
\]
with equality only at \(x_0\). On the other hand, since
\(D(u_1-\varphi)(x_0)=0\) and \(d(x_0+t\nu)=t\) for the unit normal
\(\nu\) pointing into \(\Omega_1\), we may choose \(t_\varepsilon>0\),
with \(t_\varepsilon\to0\), such that, for
\(y_\varepsilon=x_0+t_\varepsilon\nu\),
\[
(u_1-\varphi_\varepsilon)(y_\varepsilon)>0.
\]
Therefore the local maximum of \(u_1-\varphi_\varepsilon\) near \(x_0\)
cannot occur on \(\Gamma\). Hence it is attained at some interior point
\(x_\varepsilon\in\Omega_1\). Since \(x_0\) is a strict local maximum of
\(u-\varphi\), we also have \(x_\varepsilon\to x_0\).

Now $u_1-\varphi_\varepsilon$ has a local maximum at the interior point
$x_\varepsilon$, and $u_1$ is a classical solution in $\Omega_1$. Hence
\[
-\mathcal{M}^+_{\lambda,\Lambda}(D^2\varphi_\varepsilon(x_\varepsilon))
\le \mu\,u_1(x_\varepsilon).
\]
Since
\[
D^2\varphi_\varepsilon
=
D^2\varphi-\varepsilon D^2d+2\sqrt{\varepsilon}\,I,
\]
we have
\[
D^2\varphi_\varepsilon(x_\varepsilon)\to D^2\varphi(x_0),
\qquad
u_1(x_\varepsilon)\to u_1(x_0)=u(x_0).
\]
Passing to the limit and using continuity of $\mathcal{M}^+_{\lambda,\Lambda}$,
we obtain
\[
-\mathcal{M}^+_{\lambda,\Lambda}(D^2\varphi(x_0))
\le \mu\,u(x_0).
\]
Thus $u$ is a viscosity subsolution at $x_0$.

The supersolution property is proved in the same way, starting from a test
function $\varphi$ such that $u-\varphi$ has a local minimum at $x_0$, and
using the perturbation
\[
\psi_\varepsilon(x)
:=\varphi(x)+\varepsilon d(x)-\sqrt{\varepsilon}\,|x-x_0|^2.
\]
This yields
\[
-\mathcal{M}^+_{\lambda,\Lambda}(D^2\varphi(x_0))
\ge \mu\,u(x_0).
\]

Therefore $u$ is both a viscosity subsolution and a viscosity supersolution
at every point of $\Gamma$, and hence a viscosity solution of
\[
-\mathcal{M}^+_{\lambda,\Lambda}(D^2u)=\mu\,u
\qquad\text{in }\Omega.
\]
\end{proof}

\begin{remark}
Although Lemma~\ref{lem:gluing} is stated for two subdomains separated by one smooth interface, we shall use it below for a finite patchwise decomposition. This causes no additional difficulty. Indeed, the lemma can be applied successively across the finitely many interfaces, since the local pieces are \(C^2\) inside each patch and agree up to first order on every common interface. At lower-dimensional junctions, the same viscosity test-function argument applies locally, because the globally defined function is \(C^1\) and satisfies the equation classically on each adjacent patch. Thus a finite \(C^1\)-compatible patchwise construction gives a global viscosity solution.
\end{remark}
\section{Admissible Domains: A Double--Pyramidal Geometry in $\mathbb{R}^3$}
\label{sec:construction}

Motivated by the failure of separation of variables for
\(\mathcal M^+_{\lambda,\Lambda}\) on cubes (Theorem~\ref{thm:nonsep}),
we introduce a three-dimensional family of domains adapted to the
sign structure of the Hessian.

The construction is inspired by the two-dimensional rhombus geometry
of Birindelli--Leoni~\cite{BirindelliLeoni2014}, but the
three-dimensional case requires a different construction. The domains
are double-pyramidal-type domains obtained by attaching curved caps
and bridge regions to a central cube.

A key feature is that the portion of the domain in the first octant
decomposes into seven regions. This seven-patch structure allows us to
separate the different Hessian inertia patterns that appear in the
piecewise eigenfunction constructed in Section~\ref{sec:eigenfunctions},
while keeping the interfaces explicit.

Let \(0<\lambda\leq\Lambda\), and assume that
\[
\omega:=\frac{\Lambda}{\lambda}\geq4.
\]
We set
\[
\Gamma_\omega:=
\left[
\frac{2}{\sqrt\omega},
\frac{\sqrt\omega}{2}
\right],
\qquad
\alpha:=\frac{\pi}{2}.
\]
For the remainder of this section, fix \(\gamma\in\Gamma_\omega\). Notice that
\[
\gamma\in\Gamma_\omega
\quad\Longleftrightarrow\quad
\gamma^{-1}\in\Gamma_\omega,
\]
which gives the symmetry \(\gamma\mapsto\gamma^{-1}\) used later in the
volume monotonicity argument.

The restriction \(\omega\geq4\) and the above choice of
\(\Gamma_\omega\) ensure that the face caps attach to the whole
corresponding faces of the central cube. Indeed, for the \(Z\)-cap one has
\[
0\leq
\frac{\gamma}{\sqrt\omega}(\cos x+\cos y)
\leq
\frac{2\gamma}{\sqrt\omega}
\leq1,
\]
while for the \(X\)- and \(Y\)-caps,
\[
0\leq
\frac{\gamma\cos y+\cos z}{\gamma\sqrt\omega}
\leq
\frac{1+\gamma^{-1}}{\sqrt\omega}
\leq1.
\]
Thus the inverse trigonometric expressions defining the face caps are
well defined on the full cube faces.

We stress that the definitions below are purely geometric. The associated
piecewise eigenfunction will be introduced only in
Section~\ref{sec:eigenfunctions}. All inverse trigonometric functions are
taken on their principal branches. In the bridge regions, the displayed
conditions of the form
\[
0\leq \cdots \leq 1
\]
ensure that the corresponding \(\arccos\) expressions are well defined.

\subsection{Construction in the first octant}

Let
\[
\mathbb{R}^3_{\ge 0}:=\{(x,y,z)\in\mathbb{R}^3:x,y,z\ge 0\}.
\]
The portion of the domain in the first octant is obtained by adjoining three face caps and three edge bridges to a central cube.

\paragraph{Central region (P1).}
\[
\Omega^{\omega,+}_{\gamma}[\mathrm{C}]
:=
\big\{(x,y,z)\in\mathbb{R}^3_{\ge 0}:0\le x,y,z\le \alpha\big\}.
\]

\paragraph{Face caps (P2--P4).}
\begin{align*}
\Omega^{\omega,+}_{\gamma}[\mathrm{Z}]
&:=
\Big\{(x,y,z)\in\mathbb{R}^3_{\ge 0}:
0\le x,y\le \alpha,\;
\alpha\le z\le \alpha+\sqrt{\omega}\,\arcsin\!\Big(\frac{\gamma}{\sqrt{\omega}}(\cos x+\cos y)\Big)\Big\},\\[4pt]
\Omega^{\omega,+}_{\gamma}[\mathrm{X}]
&:=
\Big\{(x,y,z)\in\mathbb{R}^3_{\ge 0}:
0\le y,z\le \alpha,\;
\alpha\le x\le \alpha+\sqrt{\omega}\,\arcsin\!\Big(\frac{\gamma\cos y+\cos z}{\gamma\sqrt{\omega}}\Big)\Big\},\\[4pt]
\Omega^{\omega,+}_{\gamma}[\mathrm{Y}]
&:=
\Big\{(x,y,z)\in\mathbb{R}^3_{\ge 0}:
0\le x,z\le \alpha,\;
\alpha\le y\le \alpha+\sqrt{\omega}\,\arcsin\!\Big(\frac{\gamma\cos x+\cos z}{\gamma\sqrt{\omega}}\Big)\Big\}.
\end{align*}

\paragraph{Edge bridges (P5--P7).}
The sharp coordinate extents of the bridge pieces are determined by the traces of the
adjacent cap regions. Thus
\begin{align*}
\Omega^{\omega,+}_{\gamma}&[\mathrm{ZX}]
:=
\Big\{(x,y,z)\in\mathbb{R}^3_{\ge 0}:
\alpha\le x\le \alpha+\sqrt{\omega}\arcsin\!\Big(\frac{1}{\sqrt{\omega}}\Big),\;
\alpha\le z\le \alpha+\sqrt{\omega}\arcsin\!\Big(\frac{\gamma}{\sqrt{\omega}}\Big),\\
&0\le \sqrt{\omega}\sin\!\Big(\frac{x-\alpha}{\sqrt{\omega}}\Big)
+\frac{\sqrt{\omega}}{\gamma}\sin\!\Big(\frac{z-\alpha}{\sqrt{\omega}}\Big)\le 1,\;
0\le y\le \arccos\!\Big(\sqrt{\omega}\sin\!\Big(\frac{x-\alpha}{\sqrt{\omega}}\Big)
+\frac{\sqrt{\omega}}{\gamma}\sin\!\Big(\frac{z-\alpha}{\sqrt{\omega}}\Big)\Big)\Big\},\\[8pt]
\Omega^{\omega,+}_{\gamma}&[\mathrm{XY}]
:=
\Big\{(x,y,z)\in\mathbb{R}^3_{\ge 0}:
\alpha\le x\le \alpha+\sqrt{\omega}\arcsin\!\Big(\frac{1}{\gamma\sqrt{\omega}}\Big),\;
\alpha\le y\le \alpha+\sqrt{\omega}\arcsin\!\Big(\frac{1}{\gamma\sqrt{\omega}}\Big),\\
&0\le \gamma\sqrt{\omega}\sin\!\Big(\frac{x-\alpha}{\sqrt{\omega}}\Big)
+\gamma\sqrt{\omega}\sin\!\Big(\frac{y-\alpha}{\sqrt{\omega}}\Big)\le 1,\;
0\le z\le \arccos\!\Big(\gamma\sqrt{\omega}\sin\!\Big(\frac{x-\alpha}{\sqrt{\omega}}\Big)
+\gamma\sqrt{\omega}\sin\!\Big(\frac{y-\alpha}{\sqrt{\omega}}\Big)\Big)\Big\},\\[8pt]
\Omega^{\omega,+}_{\gamma}&[\mathrm{YZ}]
:=
\Big\{(x,y,z)\in\mathbb{R}^3_{\ge 0}:
\alpha\le y\le \alpha+\sqrt{\omega}\arcsin\!\Big(\frac{1}{\sqrt{\omega}}\Big),\;
\alpha\le z\le \alpha+\sqrt{\omega}\arcsin\!\Big(\frac{\gamma}{\sqrt{\omega}}\Big),\\
&0\le \sqrt{\omega}\sin\!\Big(\frac{y-\alpha}{\sqrt{\omega}}\Big)
+\frac{\sqrt{\omega}}{\gamma}\sin\!\Big(\frac{z-\alpha}{\sqrt{\omega}}\Big)\le 1,\;
0\le x\le \arccos\!\Big(\sqrt{\omega}\sin\!\Big(\frac{y-\alpha}{\sqrt{\omega}}\Big)
+\frac{\sqrt{\omega}}{\gamma}\sin\!\Big(\frac{z-\alpha}{\sqrt{\omega}}\Big)\Big)\Big\}.
\end{align*}

We define the first-octant domain by
\[
\Omega^{\omega,+}_{\gamma}
:=
\Omega^{\omega,+}_{\gamma}[\mathrm{C}]
\cup \Omega^{\omega,+}_{\gamma}[\mathrm{Z}]
\cup \Omega^{\omega,+}_{\gamma}[\mathrm{X}]
\cup \Omega^{\omega,+}_{\gamma}[\mathrm{Y}]
\cup \Omega^{\omega,+}_{\gamma}[\mathrm{ZX}]
\cup \Omega^{\omega,+}_{\gamma}[\mathrm{XY}]
\cup \Omega^{\omega,+}_{\gamma}[\mathrm{YZ}].
\]
Here and below, the displayed non-strict inequalities describe the closures of the patch pieces; the domain itself is understood as the interior of the resulting union.

\subsection{Full domain by reflection}

The full domain is obtained by symmetry across the coordinate planes:
\[
\Omega^{\omega}_{\gamma}
:=
\big\{(\sigma_1x,\sigma_2y,\sigma_3z):(x,y,z)\in\Omega^{\omega,+}_{\gamma},\; \sigma_1,\sigma_2,\sigma_3\in\{-1,1\}\big\}.
\]


Geometrically, \(\Omega^\omega_\gamma\) is a bounded double--pyramidal-type domain with a central cubical core, curved caps, and bridge regions arranged symmetrically with respect to the coordinate planes. The explicit piecewise eigenfunction associated with this geometry will be introduced in the next section, where we also verify the \(C^1\) compatibility across the patch interfaces.

For illustration, Figure~\ref{fig:three_domains} shows three representative
domains corresponding to \(\lambda=1\) and \(\Lambda=16\), so that
\(\omega=16\). The left panel shows the first-octant domain
\(\Omega^{\omega,+}*{\gamma}\) in the symmetric case \(\gamma=1\), together
with the seven-patch construction. The middle panel shows the corresponding
full domain \(\Omega^\omega*{\gamma}\) obtained by reflection across the
coordinate planes. The right panel shows a nonsymmetric example, with
\(\gamma=1.5\), illustrating the effect of anisotropy on the geometry.

\begin{figure}[htbp]
\centering

\begin{subfigure}[b]{0.32\textwidth}
\centering
\includegraphics[width=\textwidth]{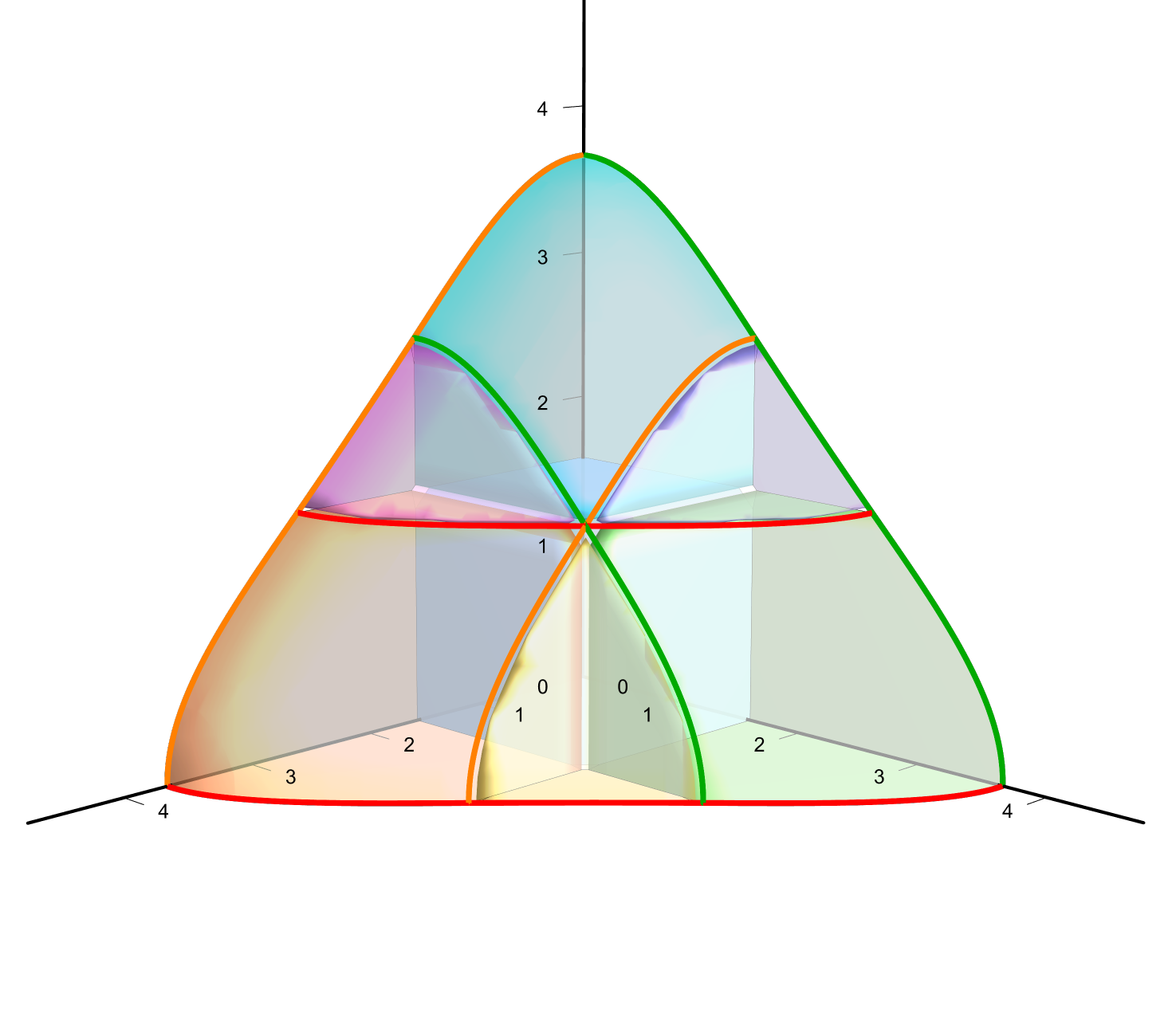}
\caption{\(\Omega^{\omega,+}_{\gamma}\), \(\gamma=1\).}
\label{fig:first_octant_gamma1}
\end{subfigure}
\hfill
\begin{subfigure}[b]{0.32\textwidth}
\centering
\includegraphics[width=\textwidth]{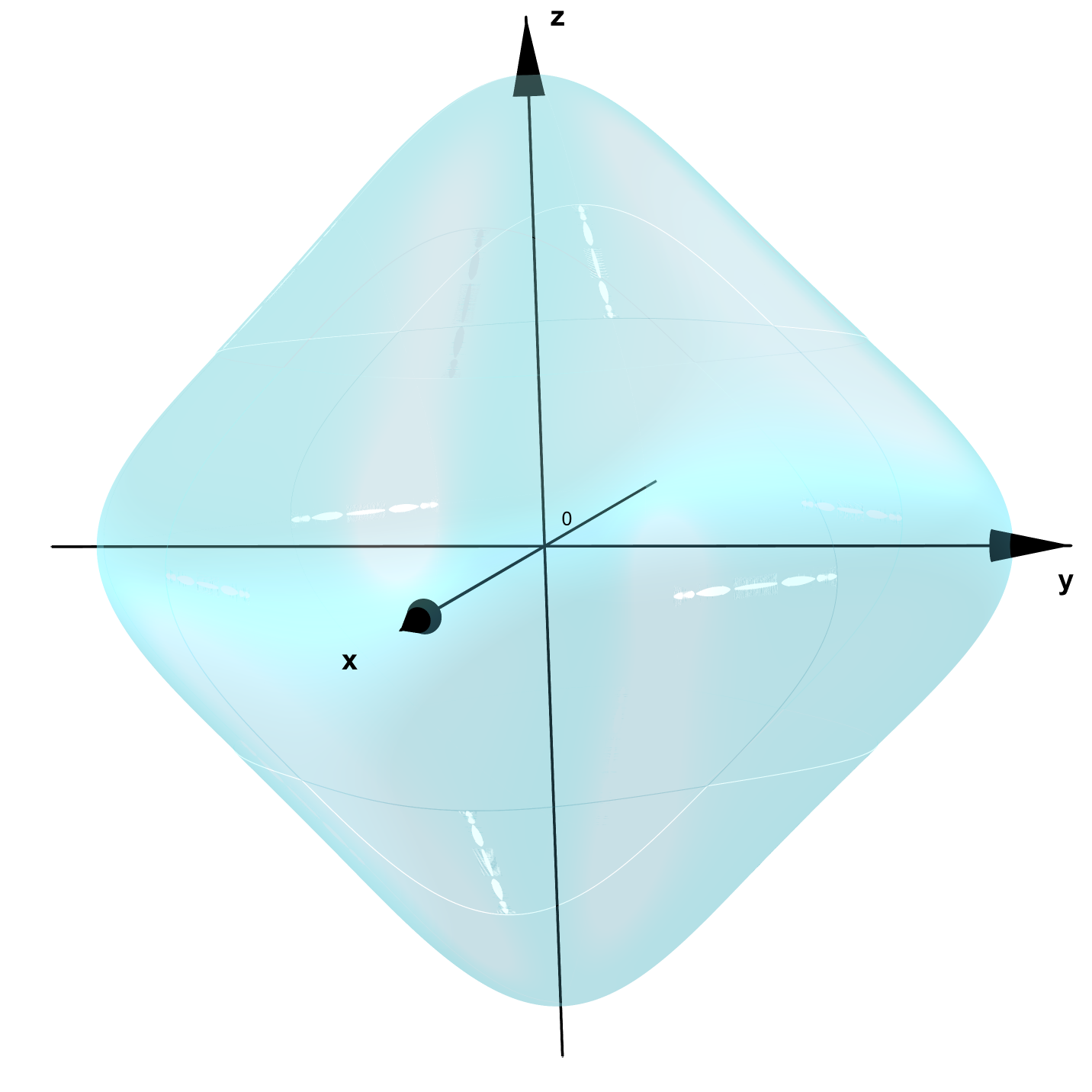}
\caption{\(\Omega^\omega_{\gamma}\), \(\gamma=1\).}
\label{fig:full_gamma1}
\end{subfigure}
\hfill
\begin{subfigure}[b]{0.32\textwidth}
\centering
\includegraphics[width=\textwidth]{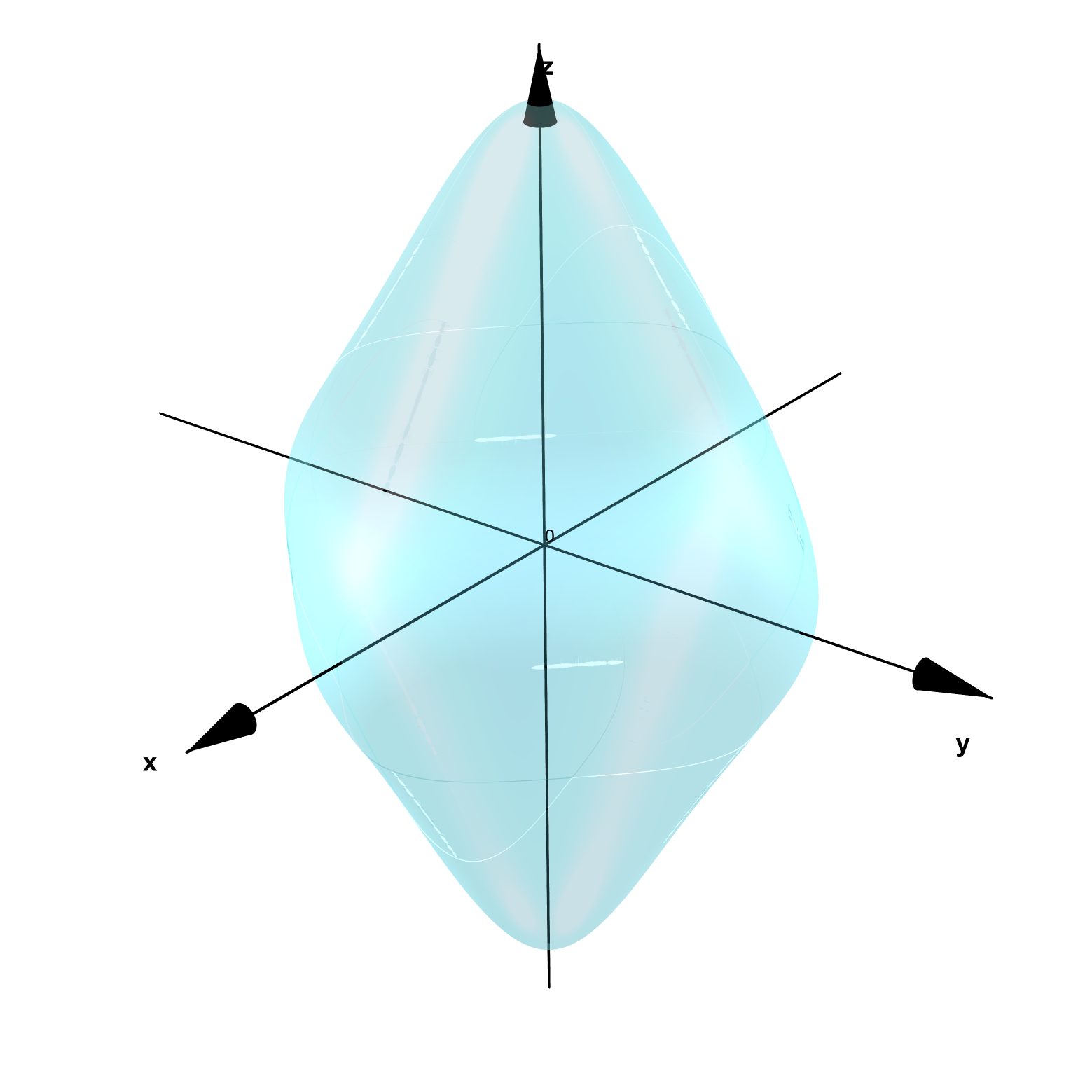}
\caption{\(\Omega^\omega_{\gamma}\), \(\gamma=1.5\).}
\label{fig:full_gamma15}
\end{subfigure}

\caption{Representative domains for \(\lambda=1\) and \(\Lambda=16\)
\((\omega=16)\). Left: the first-octant domain in the symmetric case
\(\gamma=1\). Middle: the full reflected domain in the symmetric case
\(\gamma=1\). Right: the full reflected domain in a nonsymmetric case,
\(\gamma=1.5\).}
\label{fig:three_domains}
\end{figure}

\begin{remark}[Comparison with the planar construction]
The present geometry is motivated by the two-dimensional construction of Birindelli--Leoni \cite{BirindelliLeoni2014}, but it no longer reduces to a one-profile deformation of a separable domain. In dimension two, the isotropic configuration can be rotated into a square, thereby recovering a Laplacian model with separated variables. In dimension three, no analogous simplification is available: the geometry retains its genuinely patchwise character, and the rigidity mechanism must come from the control of the Hessian inertia on the cube, the caps, and the bridges.

This is consistent with the discussion in the introduction. Although the family $\Omega^{\omega}_{\gamma}$ is built from a central cube, the surrounding attachments are intrinsically three-dimensional and are not designed to be reduced, by an orthogonal or affine change of variables, to a separable cubical model compatible with the Pucci operator.
\end{remark}

\section{Construction of the principal eigenfunction}
\label{sec:eigenfunctions}

In this section we construct explicitly a positive eigenfunction of
$-\mathcal M^+_{\lambda,\Lambda}$ in the reference domain $\Omega^\omega_\gamma$
introduced in Section~\ref{sec:construction}. Throughout this section, we fix
\[
\omega=\frac{\Lambda}{\lambda}\geq4,
\qquad
\gamma\in\Gamma_\omega,
\qquad
\alpha=\frac{\pi}{2}.
\]

We work first in the first octant. The function \(u^{\omega,+}_\gamma\),
defined patchwise on the seven regions
\[
\Omega_{\gamma}^{\omega,+}[C],\ \Omega_{\gamma}^{\omega,+}[X],\  \Omega_{\gamma}^{\omega,+}[Y] , \ \Omega_{\gamma}^{\omega,+}[Z], \ \Omega_{\gamma}^{\omega,+}[ZX],\ \Omega_{\gamma}^{\omega,+}[YZ], \ \Omega_{\gamma}^{\omega,+}[XY]
\]
will be shown to satisfy
\[
-\mathcal M^+_{\lambda,\Lambda}(D^2u^{\omega,+}_\gamma)=\lambda u^{\omega,+}_\gamma
\]
in each patch, to be positive in the interior, to vanish on the outer boundary,
and to glue at the \(C^1\) level across the common interfaces.

\[
u^{\omega,+}_\gamma(x,y,z)
=
\begin{cases}
\gamma\cos x+\gamma\cos y+\cos z,
& (x,y,z)\in\Omega_{\gamma}^{\omega,+}[C],\\[6pt]
\gamma\cos x+\gamma\cos y
+\sqrt{\omega}\cos\!\left(\dfrac{z-\alpha}{\sqrt{\omega}}+\dfrac{\pi}{2}\right),
& (x,y,z)\in\Omega_{\gamma}^{\omega,+}[Z],\\[8pt]
\gamma\sqrt{\omega}\cos\!\left(\dfrac{x-\alpha}{\sqrt{\omega}}+\dfrac{\pi}{2}\right)
+\gamma\cos y+\cos z,
& (x,y,z)\in \Omega_{\gamma}^{\omega,+}[X],\\[8pt]
\gamma\cos x
+\gamma\sqrt{\omega}\cos\!\left(\dfrac{y-\alpha}{\sqrt{\omega}}+\dfrac{\pi}{2}\right)
+\cos z,
& (x,y,z)\in\Omega_{\gamma}^{\omega,+}[Y],\\[8pt]
\gamma\sqrt{\omega}\cos\!\left(\dfrac{x-\alpha}{\sqrt{\omega}}+\dfrac{\pi}{2}\right)
+\gamma\cos y
+\sqrt{\omega}\cos\!\left(\dfrac{z-\alpha}{\sqrt{\omega}}+\dfrac{\pi}{2}\right),
& (x,y,z)\in\Omega_{\gamma}^{\omega,+}[ZX],\\[8pt]
\gamma\sqrt{\omega}\cos\!\left(\dfrac{x-\alpha}{\sqrt{\omega}}+\dfrac{\pi}{2}\right)
+\gamma\sqrt{\omega}\cos\!\left(\dfrac{y-\alpha}{\sqrt{\omega}}+\dfrac{\pi}{2}\right)
+\cos z,
& (x,y,z)\in\Omega_{\gamma}^{\omega,+}[XY],\\[8pt]
\gamma\cos x
+\gamma\sqrt{\omega}\cos\!\left(\dfrac{y-\alpha}{\sqrt{\omega}}+\dfrac{\pi}{2}\right)
+\sqrt{\omega}\cos\!\left(\dfrac{z-\alpha}{\sqrt{\omega}}+\dfrac{\pi}{2}\right),
& (x,y,z)\in\Omega_{\gamma}^{\omega,+}[YZ].
\end{cases}
\]
We shall show that $u^{\omega,+}_\gamma$ satisfies
\[
-\mathcal M^+_{\lambda,\Lambda}(D^2u^{\omega,+}_\gamma)=\lambda u^{\omega,+}_\gamma
\]
in each patch, is positive in the interior, vanishes on the outer boundary,
and matches at the $C^1$ level across the common interfaces.

\subsection{Patchwise verification of the equation}
We prove these properties in the next two subsections. For convenience, we denote by
\[
u_C,\quad u_Z,\quad u_X,\quad u_Y,\quad u_{ZX},\quad u_{XY},\quad u_{YZ}
\]
the restrictions of \(u^{\omega,+}_\gamma\) to
\[
\Omega_{\gamma}^{\omega,+}[C],\quad
\Omega_{\gamma}^{\omega,+}[Z],\quad
\Omega_{\gamma}^{\omega,+}[X],\quad
\Omega_{\gamma}^{\omega,+}[Y],\quad
\Omega_{\gamma}^{\omega,+}[ZX],\quad
\Omega_{\gamma}^{\omega,+}[XY],\quad
\Omega_{\gamma}^{\omega,+}[YZ],
\]
respectively.

\begin{proposition}[Central cube]
In \(\Omega_{\gamma}^{\omega,+}[C]\) one has
\[
-\mathcal M^+_{\lambda,\Lambda}(D^2u_C)=\lambda u_C.
\]
\end{proposition}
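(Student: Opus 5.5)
In the central cube \(\Omega_{\gamma}^{\omega,+}[C]=[0,\alpha]^3\), the function \(u_C(x,y,z)=\gamma\cos x+\gamma\cos y+\cos z\) is a sum of one-variable functions. The plan is therefore to compute its Hessian explicitly, read off the signs of its eigenvalues on the cube, and evaluate the Pucci operator directly from definition \eqref{pucci}. No gluing or viscosity argument is needed: \(u_C\) is smooth (indeed real-analytic) on a neighbourhood of the closed cube, so the identity is to be checked pointwise in the classical sense.

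\textbf{Step 1 (Hessian).} Since the variables separate, the mixed second derivatives of \(u_C\) vanish, and
\[
D^2u_C(x,y,z)=\operatorname{diag}\bigl(-\gamma\cos x,\;-\gamma\cos y,\;-\cos z\bigr).
\]
The eigenvalues of \(D^2u_C\) are therefore exactly the three diagonal entries.

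\textbf{Step 2 (sign control).} For \(0\le x,y,z\le \alpha=\pi/2\) we have \(\cos x,\cos y,\cos z\ge 0\). Since \(\gamma>0\), every eigenvalue of \(D^2u_C\) is \(\le 0\). Eigenvalues equal to zero, which occur only on the faces \(x=\alpha\), \(y=\alpha\) or \(z=\alpha\), contribute nothing to either sum in \eqref{pucci}. Hence only the coefficient \(\lambda\) is active, and
\[
\mathcal M^+_{\lambda,\Lambda}(D^2u_C)=\lambda\bigl(-\gamma\cos x-\gamma\cos y-\cos z\bigr)=-\lambda\,u_C .
\]

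\textbf{Step 3 (conclusion).} Step 2 gives \(-\mathcal M^+_{\lambda,\Lambda}(D^2u_C)=\lambda u_C\) on the whole closed cube, which is the assertion of the proposition.

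There is no genuine obstacle here. The only point requiring care is the sign condition in Step 2, and it holds precisely because the cube has been chosen with side \(\alpha=\pi/2\), the first zero of the cosine. This is the patch in which the Hessian is negative semidefinite, so the operator acts linearly with the smaller ellipticity constant \(\lambda\). The anisotropy parameter \(\gamma\) and the ratio \(\omega\) play no role beyond \(\gamma>0\). In particular, the eigenvalue \(\lambda\) is independent of \(\gamma\), which is what the later patches must reproduce.
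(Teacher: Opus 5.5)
Your proposal is correct and follows the paper's proof: the Hessian $D^2u_C=\operatorname{diag}(-\gamma\cos x,-\gamma\cos y,-\cos z)$ is negative semidefinite on $[0,\pi/2]^3$, so $\mathcal M^+_{\lambda,\Lambda}$ reduces to $\lambda\Delta$, and $-\lambda\Delta u_C=\lambda u_C$. Your remark that zero eigenvalues on the faces contribute nothing is a small clarification the paper leaves implicit.
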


\begin{proof}
On \(\Omega_{\gamma}^{\omega,+}[C]\),
\[
u_C(x,y,z)=\gamma\cos x+\gamma\cos y+\cos z.
\]
Hence
\[
D^2u_C=\operatorname{diag}(-\gamma\cos x,\,-\gamma\cos y,\,-\cos z).
\]
Since \(0\le x,y,z\le \alpha=\pi/2\), all three eigenvalues of \(D^2u_C\) are nonpositive. Therefore
\[
-\mathcal M^+_{\lambda,\Lambda}(D^2u_C)
=
-\lambda\Delta u_C
=
\lambda u_C.
\]
\end{proof}

\begin{proposition}[The \(Z\)--cap]
In \(\Omega_{\gamma}^{\omega,+}[Z]\) one has
\[
-\mathcal M^+_{\lambda,\Lambda}(D^2u_Z)=\lambda u_Z.
\]
Moreover, \(u_Z>0\) in \(\Omega_{\gamma}^{\omega,+}[Z]^\circ\) and \(u_Z=0\) on the outer boundary portion of \(\Omega_{\gamma}^{\omega,+}[Z]\).
\end{proposition}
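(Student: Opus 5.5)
The plan is a direct patchwise computation, in the same way as for the central cube, but now with one Hessian eigenvalue of the opposite sign. Write \(t:=(z-\alpha)/\sqrt{\omega}\). Since \(\cos(t+\pi/2)=-\sin t\), we have
\[
u_Z(x,y,z)=\gamma\cos x+\gamma\cos y-\sqrt{\omega}\,\sin t .
\]
Because \(u_Z\) is a sum of one-variable functions, its Hessian is diagonal:
\[
D^2u_Z=\operatorname{diag}\Big(-\gamma\cos x,\,-\gamma\cos y,\,\tfrac{1}{\sqrt{\omega}}\sin t\Big).
\]

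The first step is to fix the inertia of \(D^2u_Z\) on the cap. We have \(0\le x,y\le\pi/2\), so the first two eigenvalues are \(\le 0\). In the cap, \(0\le z-\alpha\le \sqrt{\omega}\arcsin(\cdot)\), where the argument of \(\arcsin\) lies in \([0,1]\) by the bound \(\frac{2\gamma}{\sqrt\omega}\le1\) recorded in Section~\ref{sec:construction} (this is where \(\omega\ge4\) and \(\gamma\in\Gamma_\omega\) enter). Hence \(t\in[0,\pi/2]\), so \(\sin t\ge0\) and the third eigenvalue is \(\ge0\). This sign bookkeeping is the only genuinely delicate point; once it is settled, the definition \eqref{pucci} gives
\[
-\mathcal M^+_{\lambda,\Lambda}(D^2u_Z)=\lambda\gamma\cos x+\lambda\gamma\cos y-\frac{\Lambda}{\sqrt{\omega}}\sin t .
\]
Using \(\Lambda/\sqrt{\omega}=\lambda\omega/\sqrt\omega=\lambda\sqrt{\omega}\), the right-hand side equals \(\lambda u_Z\). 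This is precisely why the vertical profile is rescaled by \(\sqrt\omega\): the factor \(\Lambda\) on the positive eigenvalue is compensated by the frequency \(1/\sqrt\omega\) and the amplitude \(\sqrt\omega\).

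For positivity and the boundary values, note that
\[
u_Z>0\iff \sin t<\frac{\gamma}{\sqrt{\omega}}(\cos x+\cos y).
\]
Both \(t\) and \(\arcsin\big(\frac{\gamma}{\sqrt\omega}(\cos x+\cos y)\big)\) lie in \([0,\pi/2]\), and \(\sin\) is strictly increasing there. Therefore the condition is equivalent to
\[
z<\alpha+\sqrt{\omega}\arcsin\Big(\frac{\gamma}{\sqrt{\omega}}(\cos x+\cos y)\Big).
\]
This inequality holds strictly at interior points of the cap, giving \(u_Z>0\) there. Equality holds exactly on the upper graph, giving \(u_Z=0\) on the outer boundary portion.

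The degenerate point \(x=y=\pi/2\), where the cap has zero height, lies on the outer boundary and is harmless: there \(u_Z=0\) as well. The lateral faces \(x=\pi/2\) or \(y=\pi/2\) and the bottom face \(z=\alpha\) are interfaces with other patches rather than outer boundary, so they are not part of this statement; they are handled by the \(C^1\) matching later.
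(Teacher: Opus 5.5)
Your proposal is correct and follows essentially the same route as the paper. Both arguments use the same ingredients: a diagonal Hessian with two nonpositive entries and one nonnegative $z$-entry (the sign is fixed by $t\in[0,\pi/2]$), the identity $\Lambda/\sqrt{\omega}=\lambda\sqrt{\omega}$ to recover $\lambda u_Z$, and strict monotonicity of $\sin$ on $[0,\pi/2]$ for interior positivity and for vanishing on the upper graph. Your explicit check that the arcsine argument lies in $[0,1]$, because $2\gamma/\sqrt{\omega}\le 1$, spells out a point the paper's proof uses only implicitly; the paper records that bound in Section 3.
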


\begin{proof}
On \(\Omega_{\gamma}^{\omega,+}[Z]\),
\[
u_Z(x,y,z)=\gamma(\cos x+\cos y)-\sqrt{\omega}\sin\!\left(\frac{z-\alpha}{\sqrt{\omega}}\right).
\]
We compute
\[
(u_Z)_{xx}=-\gamma\cos x\le 0,\qquad
(u_Z)_{yy}=-\gamma\cos y\le 0,\qquad
(u_Z)_{zz}=\frac{1}{\sqrt{\omega}}\sin\!\left(\frac{z-\alpha}{\sqrt{\omega}}\right)\ge 0.
\]
Thus the Hessian has exactly one nonnegative eigenvalue, namely in the \(z\)-direction. Therefore
\[
-\mathcal M^+_{\lambda,\Lambda}(D^2u_Z)
=
-\Lambda (u_Z)_{zz}
-\lambda\big((u_Z)_{xx}+(u_Z)_{yy}\big).
\]
Using \(\Lambda=\omega\lambda\), we obtain
\[
-\Lambda (u_Z)_{zz}
=
-\lambda\sqrt{\omega}\sin\!\left(\frac{z-\alpha}{\sqrt{\omega}}\right),
\]
and
\[
-\lambda\big((u_Z)_{xx}+(u_Z)_{yy}\big)
=
\lambda\gamma(\cos x+\cos y).
\]
Hence
\[
-\mathcal M^+_{\lambda,\Lambda}(D^2u_Z)
=
\lambda\left[
\gamma(\cos x+\cos y)-\sqrt{\omega}\sin\!\left(\frac{z-\alpha}{\sqrt{\omega}}\right)
\right]
=
\lambda u_Z.
\]

on the outer boundary portion of \(\Omega_{\gamma}^{\omega,+}[Z]\),
\[
\sin\!\left(\frac{z-\alpha}{\sqrt{\omega}}\right)
=
\frac{\gamma}{\sqrt{\omega}}(\cos x+\cos y),
\]
and therefore \(u_Z=0\).

If \((x,y,z)\in \Omega_{\gamma}^{\omega,+}[Z]^\circ\), then
\[
0\le \frac{z-\alpha}{\sqrt{\omega}}
<
\arcsin\!\left(\frac{\gamma}{\sqrt{\omega}}(\cos x+\cos y)\right)
\le \frac{\pi}{2}.
\]
Since \(\sin\) is increasing on \([0,\pi/2]\), it follows that
\[
\sin\!\left(\frac{z-\alpha}{\sqrt{\omega}}\right)
<
\frac{\gamma}{\sqrt{\omega}}(\cos x+\cos y),
\]
hence \(u_Z>0\) in the interior.
\end{proof}

\begin{proposition}[The \(X\)--cap]
In \(\Omega_{\gamma}^{\omega,+}[X]\) one has
\[
-\mathcal M^+_{\lambda,\Lambda}(D^2u_X)=\lambda u_X.
\]
Moreover, \(u_X>0\) in \(\Omega_{\gamma}^{\omega,+}[X]^\circ\) and \(u_X=0\) on the outer boundary portion of \(\Omega_{\gamma}^{\omega,+}[X]\).
\end{proposition}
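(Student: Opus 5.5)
The plan is to repeat the argument used for the \(Z\)-cap, with the roles of \(x\) and \(z\) exchanged and the weight \(\gamma\) now attached to the distinguished variable. First I will rewrite the restriction using \(\cos(\theta+\pi/2)=-\sin\theta\):
\[
u_X(x,y,z)=-\gamma\sqrt{\omega}\,\sin\!\Big(\frac{x-\alpha}{\sqrt\omega}\Big)+\gamma\cos y+\cos z .
\]
The Hessian is diagonal, since \(u_X\) is a sum of one-variable functions. Its entries are
\[
(u_X)_{xx}=\frac{\gamma}{\sqrt\omega}\sin\!\Big(\frac{x-\alpha}{\sqrt\omega}\Big),\qquad (u_X)_{yy}=-\gamma\cos y,\qquad (u_X)_{zz}=-\cos z .
\]

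The key step is the sign bookkeeping. On \(\Omega_\gamma^{\omega,+}[X]\) we have \(0\le y,z\le\alpha=\pi/2\), so \((u_X)_{yy},(u_X)_{zz}\le 0\). For the \(x\)-direction we have
\[
0\le \frac{x-\alpha}{\sqrt\omega}\le \arcsin\!\Big(\frac{\gamma\cos y+\cos z}{\gamma\sqrt\omega}\Big)\le\frac{\pi}{2}.
\]
Here I use the bound from Section~\ref{sec:construction}: \(\omega\ge4\) and \(\gamma\in\Gamma_\omega\) give \(0\le(\gamma\cos y+\cos z)/(\gamma\sqrt\omega)\le(1+\gamma^{-1})/\sqrt\omega\le1\). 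This makes the \(\arcsin\) well defined and keeps the angle in \([0,\pi/2]\), so \((u_X)_{xx}\ge0\).

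By the definition \eqref{pucci}, the single nonnegative eigenvalue is weighted by \(\Lambda=\omega\lambda\) and the other two by \(\lambda\). This gives
\[
-\mathcal M^+_{\lambda,\Lambda}(D^2u_X)=-\omega\lambda\,\frac{\gamma}{\sqrt\omega}\sin\!\Big(\frac{x-\alpha}{\sqrt\omega}\Big)+\lambda(\gamma\cos y+\cos z)=\lambda u_X .
\]
On the boundary portion where some entry vanishes, the two weightings agree, so the formula holds there too.

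For the boundary and positivity claims, consider first the outer boundary portion, where \(x=\alpha+\sqrt\omega\arcsin\big((\gamma\cos y+\cos z)/(\gamma\sqrt\omega)\big)\). There \(\gamma\sqrt\omega\sin((x-\alpha)/\sqrt\omega)=\gamma\cos y+\cos z\), hence \(u_X=0\). In the interior the angle \((x-\alpha)/\sqrt\omega\) lies strictly below the \(\arcsin\) value, and both lie in \([0,\pi/2]\). Since \(\sin\) is strictly increasing there, we get \(\gamma\sqrt\omega\sin((x-\alpha)/\sqrt\omega)<\gamma\cos y+\cos z\), i.e. \(u_X>0\).

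The only point needing care is keeping the angle in \([0,\pi/2]\), since this controls both the sign of \((u_X)_{xx}\) and the monotonicity argument. It is exactly where \(\omega\ge4\) and the choice of \(\Gamma_\omega\) enter, through the cap-attachment estimate already established. Everything else is a direct computation.
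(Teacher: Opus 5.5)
Your proposal is correct and follows the paper's proof essentially step for step. You rewrite $u_X$ via $\cos(\theta+\pi/2)=-\sin\theta$ and note that the diagonal Hessian has a single nonnegative entry, in the $x$-direction, so that $-\mathcal M^+_{\lambda,\Lambda}(D^2u_X)=-\Lambda (u_X)_{xx}-\lambda\big((u_X)_{yy}+(u_X)_{zz}\big)=\lambda u_X$ using $\Lambda=\omega\lambda$; you then use the strict monotonicity of $\sin$ on $[0,\pi/2]$ to get vanishing on the outer boundary and positivity in the interior, with the cap-attachment bound of Section~\ref{sec:construction} keeping the angle in that interval, as the paper also does.
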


\begin{proof}
On \(\Omega_{\gamma}^{\omega,+}[X]\),
\[
u_X(x,y,z)
=
-\gamma\sqrt{\omega}\sin\!\left(\frac{x-\alpha}{\sqrt{\omega}}\right)
+\gamma\cos y+\cos z.
\]
We compute
\[
(u_X)_{xx}=\frac{\gamma}{\sqrt{\omega}}\sin\!\left(\frac{x-\alpha}{\sqrt{\omega}}\right)\ge 0,\qquad
(u_X)_{yy}=-\gamma\cos y\le 0,\qquad
(u_X)_{zz}=-\cos z\le 0.
\]
Thus the Hessian has exactly one nonnegative eigenvalue, namely in the \(x\)-direction. Hence
\[
-\mathcal M^+_{\lambda,\Lambda}(D^2u_X)
=
-\Lambda (u_X)_{xx}
-\lambda\big((u_X)_{yy}+(u_X)_{zz}\big).
\]
Using \(\Lambda=\omega\lambda\), we obtain
\[
-\Lambda (u_X)_{xx}
=
-\lambda\gamma\sqrt{\omega}\sin\!\left(\frac{x-\alpha}{\sqrt{\omega}}\right),
\]
and
\[
-\lambda\big((u_X)_{yy}+(u_X)_{zz}\big)
=
\lambda(\gamma\cos y+\cos z).
\]
Therefore
\[
-\mathcal M^+_{\lambda,\Lambda}(D^2u_X)
=
\lambda\left[
-\gamma\sqrt{\omega}\sin\!\left(\frac{x-\alpha}{\sqrt{\omega}}\right)+\gamma\cos y+\cos z
\right]
=
\lambda u_X.
\]

on the outer boundary portion of \(\Omega_{\gamma}^{\omega,+}[X]\),
\[
\sin\!\left(\frac{x-\alpha}{\sqrt{\omega}}\right)
=
\frac{\gamma\cos y+\cos z}{\gamma\sqrt{\omega}},
\]
and hence \(u_X=0\).

If \((x,y,z)\in \Omega_{\gamma}^{\omega,+}[X]^\circ\), then
\[
0\le \frac{x-\alpha}{\sqrt{\omega}}
<
\arcsin\!\left(\frac{\gamma\cos y+\cos z}{\gamma\sqrt{\omega}}\right)
\le \frac{\pi}{2},
\]
so
\[
\sin\!\left(\frac{x-\alpha}{\sqrt{\omega}}\right)
<
\frac{\gamma\cos y+\cos z}{\gamma\sqrt{\omega}}.
\]
Therefore \(u_X>0\) in the interior.
\end{proof}

\begin{proposition}[The \(Y\)--cap]
In \(\Omega_{\gamma}^{\omega,+}[Y]\) one has
\[
-\mathcal M^+_{\lambda,\Lambda}(D^2u_Y)=\lambda u_Y.
\]
Moreover, \(u_Y>0\) in \(\Omega_{\gamma}^{\omega,+}[Y]^\circ\) and \(u_Y=0\) on the outer boundary portion of \(\Omega_{\gamma}^{\omega,+}[Y]\).
\end{proposition}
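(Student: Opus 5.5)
The argument mirrors the one just given for the \(X\)--cap, with the roles of \(x\) and \(y\) interchanged. On \(\Omega_{\gamma}^{\omega,+}[Y]\) we first rewrite the patch function using \(\cos(\theta+\pi/2)=-\sin\theta\):
\[
u_Y(x,y,z)=\gamma\cos x-\gamma\sqrt{\omega}\sin\!\left(\frac{y-\alpha}{\sqrt{\omega}}\right)+\cos z .
\]
Since \(u_Y\) is a sum of one-variable functions, \(D^2u_Y\) is diagonal, with entries
\[
(u_Y)_{xx}=-\gamma\cos x,\qquad
(u_Y)_{yy}=\frac{\gamma}{\sqrt{\omega}}\sin\!\left(\frac{y-\alpha}{\sqrt{\omega}}\right),\qquad
(u_Y)_{zz}=-\cos z .
\]
The sign pattern follows from the geometry of the patch. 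We have \(0\le x,z\le\alpha\), so the \(x\)- and \(z\)-entries are nonpositive. We also have
\[
0\le (y-\alpha)/\sqrt{\omega}\le \arcsin\!\big((\gamma\cos x+\cos z)/(\gamma\sqrt\omega)\big)\le\pi/2,
\]
so the \(y\)-entry is nonnegative. By the definition \eqref{pucci}, the Pucci operator then weights the \(y\)-direction by \(\Lambda\) and the other two by \(\lambda\):
\[
-\mathcal M^+_{\lambda,\Lambda}(D^2u_Y)=-\Lambda(u_Y)_{yy}-\lambda\big((u_Y)_{xx}+(u_Y)_{zz}\big).
\]
Substituting \(\Lambda=\omega\lambda\), the first term becomes \(-\lambda\gamma\sqrt\omega\sin((y-\alpha)/\sqrt\omega)\) and the second becomes \(\lambda(\gamma\cos x+\cos z)\). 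Their sum is exactly \(\lambda u_Y\). Zero entries cause no trouble, since they contribute nothing under either weight.

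For the boundary behaviour, the outer boundary portion of the \(Y\)-cap is the graph
\[
y=\alpha+\sqrt\omega\arcsin\!\big((\gamma\cos x+\cos z)/(\gamma\sqrt\omega)\big).
\]
This is well defined on the whole face because \(0\le(\gamma\cos x+\cos z)/(\gamma\sqrt\omega)\le(1+\gamma^{-1})/\sqrt\omega\le1\), the bound verified in Section~\ref{sec:construction} for \(\gamma\in\Gamma_\omega\) and \(\omega\ge4\). On this graph \(\sin((y-\alpha)/\sqrt\omega)=(\gamma\cos x+\cos z)/(\gamma\sqrt\omega)\), and substituting into \(u_Y\) gives \(u_Y=0\).

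For positivity at an interior point, the strict inequality
\[
0\le (y-\alpha)/\sqrt\omega<\arcsin\!\big((\gamma\cos x+\cos z)/(\gamma\sqrt\omega)\big)\le\pi/2,
\]
together with the strict monotonicity of \(\sin\) on \([0,\pi/2]\), gives \(\gamma\sqrt\omega\sin((y-\alpha)/\sqrt\omega)<\gamma\cos x+\cos z\), hence \(u_Y>0\). There is no real obstacle in this proof. The only point needing care is that every ingredient, namely the arcsin bound and the sign of \((u_Y)_{yy}\), is the \(x\leftrightarrow y\) mirror of the \(X\)--cap case.
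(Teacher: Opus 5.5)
Your proof is correct and follows the paper's argument essentially step for step. Like the paper, you rewrite $u_Y$ using $\cos(\theta+\pi/2)=-\sin\theta$, read off the diagonal Hessian with sign pattern $(-,+,-)$, apply the Pucci weights with $\Lambda=\omega\lambda$, and use the arcsin graph together with the monotonicity of $\sin$ on $[0,\pi/2]$ for the boundary and positivity claims. Your explicit check that the arcsin argument lies in $[0,1]$ for $\gamma\in\Gamma_\omega$ and $\omega\ge 4$ is a small welcome addition, which the paper's proof leaves to Section~3.
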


\begin{proof}
On \(\Omega_{\gamma}^{\omega,+}[Y]\),
\[
u_Y(x,y,z)
=
\gamma\cos x
-\gamma\sqrt{\omega}\sin\!\left(\frac{y-\alpha}{\sqrt{\omega}}\right)
+\cos z.
\]
We compute
\[
(u_Y)_{xx}=-\gamma\cos x\le 0,\qquad
(u_Y)_{yy}=\frac{\gamma}{\sqrt{\omega}}\sin\!\left(\frac{y-\alpha}{\sqrt{\omega}}\right)\ge 0,\qquad
(u_Y)_{zz}=-\cos z\le 0.
\]
Thus the Hessian has exactly one nonnegative eigenvalue, namely in the \(y\)-direction. Therefore
\[
-\mathcal M^+_{\lambda,\Lambda}(D^2u_Y)
=
-\Lambda (u_Y)_{yy}
-\lambda\big((u_Y)_{xx}+(u_Y)_{zz}\big).
\]
Using \(\Lambda=\omega\lambda\), we obtain
\[
-\Lambda (u_Y)_{yy}
=
-\lambda\gamma\sqrt{\omega}\sin\!\left(\frac{y-\alpha}{\sqrt{\omega}}\right),
\]
and
\[
-\lambda\big((u_Y)_{xx}+(u_Y)_{zz}\big)
=
\lambda(\gamma\cos x+\cos z).
\]
Hence
\[
-\mathcal M^+_{\lambda,\Lambda}(D^2u_Y)
=
\lambda\left[
\gamma\cos x-\gamma\sqrt{\omega}\sin\!\left(\frac{y-\alpha}{\sqrt{\omega}}\right)+\cos z
\right]
=
\lambda u_Y.
\]

on the outer boundary portion of \(\Omega_{\gamma}^{\omega,+}[Y]\),
\[
\sin\!\left(\frac{y-\alpha}{\sqrt{\omega}}\right)
=
\frac{\gamma\cos x+\cos z}{\gamma\sqrt{\omega}},
\]
and hence \(u_Y=0\).

If \((x,y,z)\in \Omega_{\gamma}^{\omega,+}[Y]^\circ\), then
\[
0\le \frac{y-\alpha}{\sqrt{\omega}}
<
\arcsin\!\left(\frac{\gamma\cos x+\cos z}{\gamma\sqrt{\omega}}\right)
\le \frac{\pi}{2},
\]
so
\[
\sin\!\left(\frac{y-\alpha}{\sqrt{\omega}}\right)
<
\frac{\gamma\cos x+\cos z}{\gamma\sqrt{\omega}}.
\]
Therefore \(u_Y>0\) in the interior.
\end{proof}

\begin{proposition}[The \(ZX\)--bridge]
In \(\Omega_{\gamma}^{\omega,+}[ZX]\) one has
\[
-\mathcal M^+_{\lambda,\Lambda}(D^2u_{ZX})=\lambda u_{ZX}.
\]
Moreover, \(u_{ZX}>0\) in \(\Omega_{\gamma}^{\omega,+}[ZX]^\circ\) and \(u_{ZX}=0\) on the outer boundary portion of \(\Omega_{\gamma}^{\omega,+}[ZX]\).
\end{proposition}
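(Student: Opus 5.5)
On \(\Omega_{\gamma}^{\omega,+}[ZX]\) we first rewrite the patch function as
\[
u_{ZX}(x,y,z)=-\gamma\sqrt{\omega}\sin\!\Big(\frac{x-\alpha}{\sqrt\omega}\Big)+\gamma\cos y-\sqrt{\omega}\sin\!\Big(\frac{z-\alpha}{\sqrt\omega}\Big),
\]
using \(\cos(t+\pi/2)=-\sin t\). We then follow the scheme of the cap propositions. The Hessian is diagonal, with entries
\[
(u_{ZX})_{xx}=\frac{\gamma}{\sqrt\omega}\sin\!\Big(\frac{x-\alpha}{\sqrt\omega}\Big),\qquad
(u_{ZX})_{yy}=-\gamma\cos y,\qquad
(u_{ZX})_{zz}=\frac{1}{\sqrt\omega}\sin\!\Big(\frac{z-\alpha}{\sqrt\omega}\Big).
\]
We next check their signs on the patch. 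We have \(x,z\ge\alpha\), and \((x-\alpha)/\sqrt\omega\le\arcsin(1/\sqrt\omega)\le\pi/2\) (similarly for \(z\), with \(\gamma/\sqrt\omega\le 1/2\)). Hence the \(xx\) and \(zz\) entries are nonnegative. Also \(0\le y\le\arccos(\cdot)\le\pi/2\), so the \(yy\) entry is nonpositive. The Hessian therefore has two nonnegative eigenvalues (in the \(x\)- and \(z\)-directions) and one nonpositive eigenvalue, and
\[
-\mathcal M^+_{\lambda,\Lambda}(D^2u_{ZX})=-\Lambda\big((u_{ZX})_{xx}+(u_{ZX})_{zz}\big)-\lambda(u_{ZX})_{yy}.
\]
Substituting \(\Lambda=\omega\lambda\), the factor \(\omega/\sqrt\omega=\sqrt\omega\) turns the first two terms into \(-\lambda\gamma\sqrt\omega\sin(\cdot)-\lambda\sqrt\omega\sin(\cdot)\). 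The third term gives \(\lambda\gamma\cos y\). Together these are exactly \(\lambda u_{ZX}\). A zero eigenvalue causes no trouble because \(\mathcal M^+\) is continuous and both formulas agree at zero.

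For the boundary and positivity claims, set
\[
S(x,z):=\sqrt\omega\sin\!\Big(\frac{x-\alpha}{\sqrt\omega}\Big)+\frac{\sqrt\omega}{\gamma}\sin\!\Big(\frac{z-\alpha}{\sqrt\omega}\Big).
\]
This quantity is constrained to lie in \([0,1]\), and we note the identity
\[
u_{ZX}=\gamma\big(\cos y-S(x,z)\big).
\]
The outer boundary portion of the bridge is the graph \(y=\arccos S(x,z)\), and on it \(u_{ZX}=0\). The other bounding faces (\(x=\alpha\), \(z=\alpha\), \(y=0\)) are interfaces with the adjacent patches or the symmetry plane, not outer boundary. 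At interior points we have \(0<y<\arccos S\le\pi/2\). Since \(\cos\) is strictly decreasing on \([0,\pi/2]\), this gives \(\cos y> S\), so \(u_{ZX}>0\).

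The main obstacle is the geometric bookkeeping rather than the calculus. Specifically, we must confirm that the description of the bridge really forces \(0\le y\le\pi/2\) and \(0\le (x-\alpha)/\sqrt\omega,\ (z-\alpha)/\sqrt\omega\le\pi/2\), since the inertia pattern depends on these ranges. We must also confirm which faces count as the outer boundary. The \(y\)-range follows from the \(\arccos\) bound, and the other two ranges from the explicit coordinate caps together with \(\omega\ge4\) and \(\gamma\le\sqrt\omega/2\), as in the admissibility discussion of Section~\ref{sec:construction}.
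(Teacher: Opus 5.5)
Your proposal is correct and follows the paper's proof essentially step for step. It uses the same diagonal Hessian with inertia pattern $(+,-,+)$, the same evaluation of $\mathcal M^+_{\lambda,\Lambda}$ via $\Lambda=\omega\lambda$, and the same boundary and positivity argument comparing $\cos y$ with $\sqrt{\omega}\sin\frac{x-\alpha}{\sqrt{\omega}}+\frac{\sqrt{\omega}}{\gamma}\sin\frac{z-\alpha}{\sqrt{\omega}}$; your explicit check that this quantity lies in $[0,1]$, which forces $0\le y\le\pi/2$ and hence $(u_{ZX})_{yy}\le 0$, fills in a small detail the paper leaves implicit.
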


\begin{proof}
On \(\Omega_{\gamma}^{\omega,+}[ZX]\), write
\[
u_{ZX}(x,y,z)
=
-\gamma\sqrt{\omega}\sin s+\gamma\cos y-\sqrt{\omega}\sin t,
\qquad
s:=\frac{x-\alpha}{\sqrt{\omega}},\quad
t:=\frac{z-\alpha}{\sqrt{\omega}}.
\]
By the definition of \(\Omega_{\gamma}^{\omega,+}[ZX]\),
\[
0\le s\le \arcsin\!\Big(\frac{1}{\sqrt{\omega}}\Big),
\qquad
0\le t\le \arcsin\!\Big(\frac{\gamma}{\sqrt{\omega}}\Big),
\]
hence \(\sin s,\sin t\ge 0\). We compute
\[
(u_{ZX})_{xx}=\frac{\gamma}{\sqrt{\omega}}\sin s\ge 0,\qquad
(u_{ZX})_{yy}=-\gamma\cos y\le 0,\qquad
(u_{ZX})_{zz}=\frac{1}{\sqrt{\omega}}\sin t\ge 0.
\]
Thus the Hessian has two nonnegative eigenvalues, in the \(x\)- and \(z\)-directions,
and one nonpositive eigenvalue, in the \(y\)-direction. Therefore
\[
-\mathcal M^+_{\lambda,\Lambda}(D^2u_{ZX})
=
-\Lambda\big((u_{ZX})_{xx}+(u_{ZX})_{zz}\big)-\lambda (u_{ZX})_{yy}.
\]
Using \(\Lambda=\omega\lambda\), we obtain
\[
-\Lambda (u_{ZX})_{xx}=-\lambda\gamma\sqrt{\omega}\sin s,\qquad
-\Lambda (u_{ZX})_{zz}=-\lambda\sqrt{\omega}\sin t,
\]
and
\[
-\lambda (u_{ZX})_{yy}=\lambda\gamma\cos y.
\]
Hence
\[
-\mathcal M^+_{\lambda,\Lambda}(D^2u_{ZX})
=
\lambda\big(-\gamma\sqrt{\omega}\sin s+\gamma\cos y-\sqrt{\omega}\sin t\big)
=
\lambda u_{ZX}.
\]

on the outer boundary portion of \(\Omega_{\gamma}^{\omega,+}[ZX]\),
\[
\cos y=\sqrt{\omega}\sin s+\frac{\sqrt{\omega}}{\gamma}\sin t,
\]
so that
\[
\gamma\cos y=\gamma\sqrt{\omega}\sin s+\sqrt{\omega}\sin t,
\]
and therefore \(u_{ZX}=0\).

If \((x,y,z)\in \Omega_{\gamma}^{\omega,+}[ZX]^\circ\), then
\[
y<\arccos\!\left(\sqrt{\omega}\sin s+\frac{\sqrt{\omega}}{\gamma}\sin t\right),
\]
hence
\[
\cos y>
\sqrt{\omega}\sin s+\frac{\sqrt{\omega}}{\gamma}\sin t.
\]
Multiplying by \(\gamma\), we obtain
\[
\gamma\cos y>\gamma\sqrt{\omega}\sin s+\sqrt{\omega}\sin t,
\]
and therefore \(u_{ZX}>0\) in the interior.
\end{proof}

\begin{proposition}[The \(XY\)--bridge]
In \(\Omega_{\gamma}^{\omega,+}[XY]\) one has
\[
-\mathcal M^+_{\lambda,\Lambda}(D^2u_{XY})=\lambda u_{XY}.
\]
Moreover, \(u_{XY}>0\) in \(\Omega_{\gamma}^{\omega,+}[XY]^\circ\) and \(u_{XY}=0\) on the outer boundary portion of \(\Omega_{\gamma}^{\omega,+}[XY]\).
\end{proposition}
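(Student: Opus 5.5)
The argument runs parallel to the $ZX$--bridge proposition, with the roles of the variables permuted. On $\Omega_{\gamma}^{\omega,+}[XY]$ we set
\[
s:=\frac{x-\alpha}{\sqrt{\omega}},\qquad r:=\frac{y-\alpha}{\sqrt{\omega}},
\]
and use $\cos(\theta+\pi/2)=-\sin\theta$ to rewrite
\[
u_{XY}(x,y,z)=-\gamma\sqrt{\omega}\sin s-\gamma\sqrt{\omega}\sin r+\cos z .
\]
The defining inequalities of the patch give
\[
0\le s,\;r\le \arcsin\!\Big(\frac{1}{\gamma\sqrt\omega}\Big)\le\frac{\pi}{2},\qquad 0\le z\le\arccos(\cdots)\le\frac{\pi}{2}.
\]
The first range needs $\gamma\sqrt\omega\ge1$, which holds because $\gamma\ge 2/\sqrt\omega$. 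Hence $\sin s,\sin r\ge0$ and $\cos z\ge0$ throughout the patch.

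\textbf{Hessian and the equation.} The function is a sum of one-variable terms, so its Hessian is diagonal:
\[
D^2u_{XY}=\operatorname{diag}\Big(\tfrac{\gamma}{\sqrt\omega}\sin s,\ \tfrac{\gamma}{\sqrt\omega}\sin r,\ -\cos z\Big).
\]
This has two nonnegative eigenvalues, in the $x$- and $y$-directions, and one nonpositive eigenvalue, in the $z$-direction. By \eqref{pucci},
\[
-\mathcal M^+_{\lambda,\Lambda}(D^2u_{XY})=-\Lambda\big((u_{XY})_{xx}+(u_{XY})_{yy}\big)-\lambda (u_{XY})_{zz}.
\]
Substituting $\Lambda=\omega\lambda$, the factor $\omega/\sqrt\omega=\sqrt\omega$ gives
\[
-\mathcal M^+_{\lambda,\Lambda}(D^2u_{XY})=\lambda\big(-\gamma\sqrt\omega\sin s-\gamma\sqrt\omega\sin r+\cos z\big)=\lambda u_{XY}.
\]
Eigenvalues that vanish cause no ambiguity, since they contribute zero under either weight.

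\textbf{Boundary values and positivity.} The outer boundary portion of the patch is the graph
\[
z=\arccos\big(\gamma\sqrt\omega\sin s+\gamma\sqrt\omega\sin r\big),
\]
on which $\cos z=\gamma\sqrt\omega(\sin s+\sin r)$, so $u_{XY}=0$ there. In the interior we have the strict inequality $z<\arccos(\gamma\sqrt\omega(\sin s+\sin r))$. The argument of $\arccos$ lies in $[0,1]$ by the patch condition, and $\arccos$ takes values in $[0,\pi/2]$ on that interval. Since $\cos$ is strictly decreasing on $[0,\pi/2]$, this gives $\cos z>\gamma\sqrt\omega(\sin s+\sin r)$, that is, $u_{XY}>0$.

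\textbf{Main point of care.} No step is a genuine obstacle. The only thing to check carefully is that the sign conditions are consistent with the patch definition. Specifically, $s,r\ge0$ together with the upper bound $\pi/2$ ensures the $x$- and $y$-eigenvalues are nonnegative. The $\arccos$ constraint on $z$ keeps $z\in[0,\pi/2]$, so the $z$-eigenvalue is nonpositive. Together these fix the inertia pattern that selects the weights $\Lambda,\Lambda,\lambda$ used above.
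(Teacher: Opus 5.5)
Your proposal is correct and follows the paper's proof essentially line by line. It uses the same rewriting in terms of $s$ and $r$, the same diagonal Hessian whose inertia pattern $(+,+,-)$ selects the weights $\Lambda,\Lambda,\lambda$, and the same boundary and interior comparison of $\cos z$ with $\gamma\sqrt{\omega}(\sin s+\sin r)$. Your extra remarks, that the range of $\arccos$ forces $z\in[0,\pi/2]$ (so $-\cos z\le 0$) and that zero eigenvalues are harmless, make explicit points the paper leaves implicit.
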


\begin{proof}
On \(\Omega_{\gamma}^{\omega,+}[XY]\), write
\[
u_{XY}(x,y,z)
=
-\gamma\sqrt{\omega}\sin s-\gamma\sqrt{\omega}\sin r+\cos z,
\qquad
s:=\frac{x-\alpha}{\sqrt{\omega}},\quad
r:=\frac{y-\alpha}{\sqrt{\omega}}.
\]
By the definition of \(\Omega_{\gamma}^{\omega,+}[XY]\),
\[
0\le s,r\le \arcsin\!\Big(\frac{1}{\gamma\sqrt{\omega}}\Big),
\]
hence \(\sin s,\sin r\ge 0\). We compute
\[
(u_{XY})_{xx}=\frac{\gamma}{\sqrt{\omega}}\sin s\ge 0,\qquad
(u_{XY})_{yy}=\frac{\gamma}{\sqrt{\omega}}\sin r\ge 0,\qquad
(u_{XY})_{zz}=-\cos z\le 0.
\]
Thus the Hessian has two nonnegative eigenvalues, in the \(x\)- and \(y\)-directions,
and one nonpositive eigenvalue, in the \(z\)-direction. Therefore
\[
-\mathcal M^+_{\lambda,\Lambda}(D^2u_{XY})
=
-\Lambda\big((u_{XY})_{xx}+(u_{XY})_{yy}\big)-\lambda (u_{XY})_{zz}.
\]
Using \(\Lambda=\omega\lambda\), we obtain
\[
-\Lambda (u_{XY})_{xx}=-\lambda\gamma\sqrt{\omega}\sin s,\qquad
-\Lambda (u_{XY})_{yy}=-\lambda\gamma\sqrt{\omega}\sin r,
\]
and
\[
-\lambda (u_{XY})_{zz}=\lambda\cos z.
\]
Hence
\[
-\mathcal M^+_{\lambda,\Lambda}(D^2u_{XY})
=
\lambda\big(-\gamma\sqrt{\omega}\sin s-\gamma\sqrt{\omega}\sin r+\cos z\big)
=
\lambda u_{XY}.
\]

on the outer boundary portion of \(\Omega_{\gamma}^{\omega,+}[XY]\),
\[
\cos z=\gamma\sqrt{\omega}\sin s+\gamma\sqrt{\omega}\sin r,
\]
and therefore \(u_{XY}=0\).

If \((x,y,z)\in \Omega_{\gamma}^{\omega,+}[XY]^\circ\), then
\[
z<\arccos\!\left(\gamma\sqrt{\omega}\sin s+\gamma\sqrt{\omega}\sin r\right),
\]
so
\[
\cos z>\gamma\sqrt{\omega}\sin s+\gamma\sqrt{\omega}\sin r.
\]
Therefore \(u_{XY}>0\) in the interior.
\end{proof}

\begin{proposition}[The \(YZ\)--bridge]
In \(\Omega_{\gamma}^{\omega,+}[YZ]\) one has
\[
-\mathcal M^+_{\lambda,\Lambda}(D^2u_{YZ})=\lambda u_{YZ}.
\]
Moreover, \(u_{YZ}>0\) in \(\Omega_{\gamma}^{\omega,+}[YZ]^\circ\) and \(u_{YZ}=0\) on the outer boundary portion of \(\Omega_{\gamma}^{\omega,+}[YZ]\).
\end{proposition}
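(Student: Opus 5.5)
The argument follows the \(ZX\)-bridge case with the roles of \(x\) and \(y\) interchanged. On \(\Omega_{\gamma}^{\omega,+}[YZ]\) we introduce
\[
r:=\frac{y-\alpha}{\sqrt{\omega}},\qquad t:=\frac{z-\alpha}{\sqrt{\omega}},
\]
and use \(\cos(\theta+\pi/2)=-\sin\theta\) to rewrite
\[
u_{YZ}(x,y,z)=\gamma\cos x-\gamma\sqrt{\omega}\sin r-\sqrt{\omega}\sin t .
\]
The defining inequalities of \(\Omega_{\gamma}^{\omega,+}[YZ]\) give
\[
0\le r\le \arcsin\!\Big(\frac{1}{\sqrt\omega}\Big),\qquad 0\le t\le \arcsin\!\Big(\frac{\gamma}{\sqrt\omega}\Big).
\]
Both upper bounds lie in \([0,\pi/2]\), because \(1/\sqrt\omega\le 1/2\) and \(\gamma/\sqrt\omega\le 1/2\) for \(\gamma\in\Gamma_\omega\). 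Hence \(\sin r\ge0\) and \(\sin t\ge0\). The constraint \(0\le x\le\arccos(\cdots)\le\pi/2\) also gives \(\cos x\ge 0\).

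Since \(u_{YZ}\) is a sum of functions of one variable each, its Hessian is diagonal:
\[
D^2u_{YZ}=\operatorname{diag}\Big(-\gamma\cos x,\ \tfrac{\gamma}{\sqrt\omega}\sin r,\ \tfrac{1}{\sqrt\omega}\sin t\Big).
\]
The eigenvalue in the \(x\)-direction is nonpositive and the other two are nonnegative. The Pucci operator therefore acts as
\[
-\mathcal M^+_{\lambda,\Lambda}(D^2u_{YZ})=-\Lambda\big((u_{YZ})_{yy}+(u_{YZ})_{zz}\big)-\lambda (u_{YZ})_{xx}.
\]
Substituting \(\Lambda=\omega\lambda\), the factor \(\omega\cdot\frac{1}{\sqrt\omega}=\sqrt\omega\) turns the \(y\)- and \(z\)-terms into \(-\lambda\gamma\sqrt\omega\sin r\) and \(-\lambda\sqrt\omega\sin t\). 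The \(x\)-term is \(\lambda\gamma\cos x\). The three terms sum to \(\lambda u_{YZ}\).

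The sign assignment needs one check: any eigenvalue equal to zero contributes nothing under either weight \(\lambda\) or \(\Lambda\). So the formula remains valid at points where \(\sin r\), \(\sin t\) or \(\cos x\) vanishes.

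For the boundary and positivity statements, the outer boundary portion is where
\[
x=\arccos\!\Big(\sqrt\omega\sin r+\tfrac{\sqrt\omega}{\gamma}\sin t\Big).
\]
This arccos is well defined because the bridge definition imposes \(0\le\sqrt\omega\sin r+\frac{\sqrt\omega}{\gamma}\sin t\le1\). On that portion, multiplying by \(\gamma\) gives \(\gamma\cos x=\gamma\sqrt\omega\sin r+\sqrt\omega\sin t\), so \(u_{YZ}=0\).

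In the interior, \(x\) lies strictly below that arccos. Since \(\cos\) is strictly decreasing on \([0,\pi/2]\), this gives the strict inequality \(\cos x>\sqrt\omega\sin r+\frac{\sqrt\omega}{\gamma}\sin t\). Multiplying by \(\gamma>0\) yields \(u_{YZ}>0\).

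No step is genuinely hard; the computation is identical to the \(ZX\)-bridge. The only point needing care is making sure the ranges of \(r\) and \(t\) stay inside \([0,\pi/2]\) under the hypotheses \(\omega\ge4\) and \(\gamma\in\Gamma_\omega\), since that is what fixes the sign pattern of the Hessian.
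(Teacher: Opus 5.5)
Your proof is correct and follows the paper's argument essentially line by line: the same substitution $r,t$, the same diagonal Hessian with sign pattern $(-,+,+)$, the same Pucci evaluation using $\Lambda=\omega\lambda$, and the same boundary and positivity argument via the arccos constraint. Your extra checks ($\cos x\ge 0$, that zero eigenvalues are harmless, and strict monotonicity of $\cos$ on $[0,\pi/2]$) make explicit points the paper leaves tacit.
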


\begin{proof}
On \(\Omega_{\gamma}^{\omega,+}[YZ]\), write
\[
u_{YZ}(x,y,z)
=
\gamma\cos x-\gamma\sqrt{\omega}\sin r-\sqrt{\omega}\sin t,
\qquad
r:=\frac{y-\alpha}{\sqrt{\omega}},\quad
t:=\frac{z-\alpha}{\sqrt{\omega}}.
\]
By the definition of \(\Omega_{\gamma}^{\omega,+}[YZ]\),
\[
0\le r\le \arcsin\!\Big(\frac{1}{\sqrt{\omega}}\Big),
\qquad
0\le t\le \arcsin\!\Big(\frac{\gamma}{\sqrt{\omega}}\Big),
\]
hence \(\sin r,\sin t\ge 0\). We compute
\[
(u_{YZ})_{xx}=-\gamma\cos x\le 0,\qquad
(u_{YZ})_{yy}=\frac{\gamma}{\sqrt{\omega}}\sin r\ge 0,\qquad
(u_{YZ})_{zz}=\frac{1}{\sqrt{\omega}}\sin t\ge 0.
\]
Thus the Hessian has two nonnegative eigenvalues, in the \(y\)- and \(z\)-directions,
and one nonpositive eigenvalue, in the \(x\)-direction. Therefore
\[
-\mathcal M^+_{\lambda,\Lambda}(D^2u_{YZ})
=
-\Lambda\big((u_{YZ})_{yy}+(u_{YZ})_{zz}\big)-\lambda (u_{YZ})_{xx}.
\]
Using \(\Lambda=\omega\lambda\), we obtain
\[
-\Lambda (u_{YZ})_{yy}=-\lambda\gamma\sqrt{\omega}\sin r,\qquad
-\Lambda (u_{YZ})_{zz}=-\lambda\sqrt{\omega}\sin t,
\]
and
\[
-\lambda (u_{YZ})_{xx}=\lambda\gamma\cos x.
\]
Hence
\[
-\mathcal M^+_{\lambda,\Lambda}(D^2u_{YZ})
=
\lambda\big(\gamma\cos x-\gamma\sqrt{\omega}\sin r-\sqrt{\omega}\sin t\big)
=
\lambda u_{YZ}.
\]

on the outer boundary portion of \(\Omega_{\gamma}^{\omega,+}[YZ]\),
\[
\cos x=\sqrt{\omega}\sin r+\frac{\sqrt{\omega}}{\gamma}\sin t,
\]
so that
\[
\gamma\cos x=\gamma\sqrt{\omega}\sin r+\sqrt{\omega}\sin t,
\]
and therefore \(u_{YZ}=0\).

If \((x,y,z)\in \Omega_{\gamma}^{\omega,+}[YZ]^\circ\), then
\[
x<\arccos\!\left(\sqrt{\omega}\sin r+\frac{\sqrt{\omega}}{\gamma}\sin t\right),
\]
hence
\[
\cos x>
\sqrt{\omega}\sin r+\frac{\sqrt{\omega}}{\gamma}\sin t.
\]
Multiplying by \(\gamma\), we obtain
\[
\gamma\cos x>\gamma\sqrt{\omega}\sin r+\sqrt{\omega}\sin t,
\]
and therefore \(u_{YZ}>0\) in the interior.
\end{proof}

\subsection{\texorpdfstring{$C^1$}{C1} matching across interfaces}

We next verify that the patchwise definition of \(u^{\omega,+}_\gamma\) is globally of class
\(C^1\) in the first octant.

\begin{proposition}[\texorpdfstring{$C^1$}{C1} compatibility]
The local restrictions
\[
u_C,\quad u_Z,\quad u_X,\quad u_Y,\quad u_{ZX},\quad u_{XY},\quad u_{YZ}
\]
agree together with their first derivatives on all common interfaces of the seven--patch
decomposition of \(\Omega_{\gamma}^{\omega,+}\). Consequently,
\[
u^{\omega,+}_\gamma\in C^1(\overline{\Omega_{\gamma}^{\omega,+}}).
\]
\end{proposition}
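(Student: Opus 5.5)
The plan is to make the $C^1$ compatibility a statement about a single one-variable profile, so that no interface needs to be checked by hand. For $c>0$ I would set
\[
f_c(s):=
\begin{cases}
c\cos s, & 0\le s\le \alpha,\\[2pt]
-c\sqrt{\omega}\,\sin\!\Big(\dfrac{s-\alpha}{\sqrt{\omega}}\Big), & s\ge \alpha,
\end{cases}
\]
and use the identity $\sqrt{\omega}\cos\!\big(\frac{s-\alpha}{\sqrt\omega}+\frac{\pi}{2}\big)=-\sqrt{\omega}\sin\!\big(\frac{s-\alpha}{\sqrt\omega}\big)$. The first step is to read off from the seven formulas that on every patch
\[
u^{\omega,+}_\gamma(x,y,z)=f_\gamma(x)+f_\gamma(y)+f_1(z).
\]
In each patch, the branch used in a given coordinate is the inner (cosine) branch exactly when that coordinate is at most $\alpha$. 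For instance, on $\Omega^{\omega,+}_\gamma[ZX]$ we have $x,z\ge\alpha$ and $y\le\alpha$. The formula there is the outer branch in $x$ and $z$ and the inner branch in $y$.

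The second step is to check that $f_c\in C^1([0,\infty))$. Both branches are smooth on their own intervals, so only the junction $s=\alpha=\pi/2$ matters.
\begin{itemize}
\item Values: $c\cos\alpha=0$ and $-c\sqrt\omega\sin 0=0$.
\item Derivatives: $-c\sin\alpha=-c$ and $-c\cos 0=-c$.
\end{itemize}
Hence $f_c$ is $C^1$, and in fact $C^{1,1}$. Consequently $F(x,y,z):=f_\gamma(x)+f_\gamma(y)+f_1(z)$ is $C^1$ on the whole closed octant $\mathbb{R}^3_{\ge0}$. Since $u^{\omega,+}_\gamma=F$ on each patch, the restrictions automatically agree with their gradients on every common interface. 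The regularity then passes to $\overline{\Omega^{\omega,+}_\gamma}$ by restriction, so no one-sided limits need separate treatment.

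The third step, which is the only real obstacle, is geometric. I must confirm that the seven closed patches are exactly the pieces of $\overline{\Omega^{\omega,+}_\gamma}$ cut out by the sign pattern of $(x-\alpha,y-\alpha,z-\alpha)$. Then every common interface lies in one of the planes $\{x=\alpha\}$, $\{y=\alpha\}$, $\{z=\alpha\}$, and the patch formulas are consistent with $F$ on overlaps.
\begin{itemize}
\item The definitions give $C=\{x,y,z\le\alpha\}$.
\item The cap $Z$ has $x,y\le\alpha\le z$, and $X$, $Y$ are analogous.
\item The bridge $ZX$ has $y\le\alpha\le x,z$, and $XY$, $YZ$ are analogous.
\item No patch has all three coordinates $\ge\alpha$, and none is needed.
\end{itemize}
Where two patches share a face, say $Z$ and $ZX$ on $\{x=\alpha\}$, I would verify that their traces coincide. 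At $x=\alpha$ the bridge constraint reduces to $\cos y\le \frac{\sqrt\omega}{\gamma}\sin\frac{z-\alpha}{\sqrt\omega}$, which is exactly the boundary relation of the $Z$-cap at $x=\alpha$. The $X\cap ZX$, $XY$ and $YZ$ interfaces are checked the same way. The point of these checks is only that the decomposition is a genuine partition along the three planes; the $C^1$ matching itself is already supplied by $F$.

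Lower-dimensional junctions, such as edges where three patches meet, need no extra argument, because $F$ is globally $C^1$ there too. Combined with the Remark after Lemma~\ref{lem:gluing}, this is what is used afterwards.
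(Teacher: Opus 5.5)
Your proof is correct, and it takes a genuinely different route from the paper.

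The paper checks the matching interface by interface. It goes through the three cube--cap interfaces and the six bridge--cap interfaces, and on each computes the values and all three partial derivatives of the two adjacent local formulas. It then handles the edges where several patches meet only by remarking that ``the same identities'' apply there. You instead observe that every local formula is one branch choice of the single additively separated function $F(x,y,z)=f_\gamma(x)+f_\gamma(y)+f_1(z)$. All nine interface checks then collapse to one scalar junction condition, $f_c(\alpha^\pm)=0$ and $f_c'(\alpha^\pm)=-c$.

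This gains several things at once:
\begin{itemize}
\item The lower-dimensional junctions need no separate argument, whereas the paper only asserts them.
\item It explains why $D^2u^{\omega}_{\gamma}$ is diagonal on every patch, which the shear section uses.
\item The even reflection across the coordinate planes is immediately $C^1$, since $f_c'(0)=0$ and the global function is $f_\gamma(|x|)+f_\gamma(|y|)+f_1(|z|)$.
\end{itemize}

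In fact your profile is better than $C^{1,1}$. One has $f_c''(\alpha^-)=-c\cos\alpha=0$ and $f_c''(\alpha^+)=\frac{c}{\sqrt\omega}\sin 0=0$, so $f_c\in C^2$. Hence $u^{\omega,+}_\gamma$ is a classical $C^2$ solution, and the viscosity gluing lemma is not really needed for this construction. The paper's computation is more explicit, but it essentially repeats the same scalar check nine times.

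Your third step is more than the $C^1$ claim requires. It suffices that each patch's coordinate constraints match its branch choice, e.g.\ $y\le\arccos(\cdot)\le\alpha$ on the bridges, and this is immediate from the definitions. That step also contains a small sign slip. At $x=\alpha$ the $ZX$-bridge constraint $y\le\arccos\big(\frac{\sqrt\omega}{\gamma}\sin\frac{z-\alpha}{\sqrt\omega}\big)$ means $\cos y\ge\frac{\sqrt\omega}{\gamma}\sin\frac{z-\alpha}{\sqrt\omega}$, not $\le$. This corrected inequality is exactly the $Z$-cap's defining inequality at $x=\alpha$, so your conclusion that the traces coincide is unaffected.
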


\begin{proof}
We verify the matching patch by patch.

\smallskip
\noindent
\emph{Cube--cap interfaces.}

On \(z=\alpha\), one has
\[
u_Z(x,y,\alpha)=\gamma\cos x+\gamma\cos y=u_C(x,y,\alpha).
\]
Moreover,
\[
\partial_xu_Z=-\gamma\sin x=\partial_xu_C,\qquad
\partial_yu_Z=-\gamma\sin y=\partial_yu_C,
\]
and
\[
\partial_zu_Z(x,y,\alpha)
=
-\cos\!\left(\frac{z-\alpha}{\sqrt{\omega}}\right)\Big|_{z=\alpha}
=
-1
=
\partial_zu_C(x,y,\alpha).
\]

On \(x=\alpha\), one has
\[
u_X(\alpha,y,z)=\gamma\cos y+\cos z=u_C(\alpha,y,z).
\]
Furthermore,
\[
\partial_xu_X(\alpha,y,z)
=
-\gamma\cos\!\left(\frac{x-\alpha}{\sqrt{\omega}}\right)\Big|_{x=\alpha}
=
-\gamma
=
\partial_xu_C(\alpha,y,z),
\]
while
\[
\partial_yu_X=-\gamma\sin y=\partial_yu_C,\qquad
\partial_zu_X=-\sin z=\partial_zu_C.
\]

On \(y=\alpha\), one has
\[
u_Y(x,\alpha,z)=\gamma\cos x+\cos z=u_C(x,\alpha,z).
\]
Moreover,
\[
\partial_xu_Y=-\gamma\sin x=\partial_xu_C,\qquad
\partial_zu_Y=-\sin z=\partial_zu_C,
\]
and
\[
\partial_yu_Y(x,\alpha,z)
=
-\gamma\cos\!\left(\frac{y-\alpha}{\sqrt{\omega}}\right)\Big|_{y=\alpha}
=
-\gamma
=
\partial_yu_C(x,\alpha,z).
\]

\smallskip
\noindent
\emph{Bridge--cap interfaces.}

On \(z=\alpha\), the \(ZX\)-bridge meets the \(X\)-cap. Indeed,
\[
u_{ZX}(x,y,\alpha)
=
-\gamma\sqrt{\omega}\sin\!\left(\frac{x-\alpha}{\sqrt{\omega}}\right)+\gamma\cos y
=
u_X(x,y,\alpha).
\]
Also,
\[
\partial_xu_{ZX}(x,y,\alpha)
=
-\gamma\cos\!\left(\frac{x-\alpha}{\sqrt{\omega}}\right)
=
\partial_xu_X(x,y,\alpha),
\]
\[
\partial_yu_{ZX}(x,y,\alpha)=-\gamma\sin y=\partial_yu_X(x,y,\alpha),
\]
and
\[
\partial_zu_{ZX}(x,y,\alpha)
=
-\cos\!\left(\frac{z-\alpha}{\sqrt{\omega}}\right)\Big|_{z=\alpha}
=
-1
=
\partial_zu_X(x,y,\alpha).
\]

On \(x=\alpha\), the \(ZX\)-bridge meets the \(Z\)-cap. Indeed,
\[
u_{ZX}(\alpha,y,z)
=
\gamma\cos y-\sqrt{\omega}\sin\!\left(\frac{z-\alpha}{\sqrt{\omega}}\right)
=
u_Z(\alpha,y,z).
\]
Moreover,
\[
\partial_xu_{ZX}(\alpha,y,z)
=
-\gamma\cos\!\left(\frac{x-\alpha}{\sqrt{\omega}}\right)\Big|_{x=\alpha}
=
-\gamma
=
\partial_xu_Z(\alpha,y,z),
\]
\[
\partial_yu_{ZX}(\alpha,y,z)=-\gamma\sin y=\partial_yu_Z(\alpha,y,z),
\]
and
\[
\partial_zu_{ZX}(\alpha,y,z)
=
-\cos\!\left(\frac{z-\alpha}{\sqrt{\omega}}\right)
=
\partial_zu_Z(\alpha,y,z).
\]

On \(y=\alpha\), the \(XY\)-bridge meets the \(X\)-cap. Indeed,
\[
u_{XY}(x,\alpha,z)
=
-\gamma\sqrt{\omega}\sin\!\left(\frac{x-\alpha}{\sqrt{\omega}}\right)+\cos z
=
u_X(x,\alpha,z).
\]
Also,
\[
\partial_xu_{XY}(x,\alpha,z)
=
-\gamma\cos\!\left(\frac{x-\alpha}{\sqrt{\omega}}\right)
=
\partial_xu_X(x,\alpha,z),
\]
\[
\partial_zu_{XY}(x,\alpha,z)=-\sin z=\partial_zu_X(x,\alpha,z),
\]
and
\[
\partial_yu_{XY}(x,\alpha,z)
=
-\gamma\cos\!\left(\frac{y-\alpha}{\sqrt{\omega}}\right)\Big|_{y=\alpha}
=
-\gamma
=
\partial_yu_X(x,\alpha,z).
\]

On \(x=\alpha\), the \(XY\)-bridge meets the \(Y\)-cap. Indeed,
\[
u_{XY}(\alpha,y,z)
=
-\gamma\sqrt{\omega}\sin\!\left(\frac{y-\alpha}{\sqrt{\omega}}\right)+\cos z
=
u_Y(\alpha,y,z).
\]
Moreover,
\[
\partial_yu_{XY}(\alpha,y,z)
=
-\gamma\cos\!\left(\frac{y-\alpha}{\sqrt{\omega}}\right)
=
\partial_yu_Y(\alpha,y,z),
\]
\[
\partial_zu_{XY}(\alpha,y,z)=-\sin z=\partial_zu_Y(\alpha,y,z),
\]
and
\[
\partial_xu_{XY}(\alpha,y,z)
=
-\gamma\cos\!\left(\frac{x-\alpha}{\sqrt{\omega}}\right)\Big|_{x=\alpha}
=
-\gamma
=
\partial_xu_Y(\alpha,y,z).
\]

On \(z=\alpha\), the \(YZ\)-bridge meets the \(Y\)-cap. Indeed,
\[
u_{YZ}(x,y,\alpha)
=
\gamma\cos x-\gamma\sqrt{\omega}\sin\!\left(\frac{y-\alpha}{\sqrt{\omega}}\right)
=
u_Y(x,y,\alpha).
\]
Also,
\[
\partial_xu_{YZ}(x,y,\alpha)=-\gamma\sin x=\partial_xu_Y(x,y,\alpha),
\]
\[
\partial_yu_{YZ}(x,y,\alpha)
=
-\gamma\cos\!\left(\frac{y-\alpha}{\sqrt{\omega}}\right)
=
\partial_yu_Y(x,y,\alpha),
\]
and
\[
\partial_zu_{YZ}(x,y,\alpha)
=
-\cos\!\left(\frac{z-\alpha}{\sqrt{\omega}}\right)\Big|_{z=\alpha}
=
-1
=
\partial_zu_Y(x,y,\alpha).
\]

On \(y=\alpha\), the \(YZ\)-bridge meets the \(Z\)-cap. Indeed,
\[
u_{YZ}(x,\alpha,z)
=
\gamma\cos x-\sqrt{\omega}\sin\!\left(\frac{z-\alpha}{\sqrt{\omega}}\right)
=
u_Z(x,\alpha,z).
\]
Moreover,
\[
\partial_xu_{YZ}(x,\alpha,z)=-\gamma\sin x=\partial_xu_Z(x,\alpha,z),
\]
\[
\partial_zu_{YZ}(x,\alpha,z)
=
-\cos\!\left(\frac{z-\alpha}{\sqrt{\omega}}\right)
=
\partial_zu_Z(x,\alpha,z),
\]
and
\[
\partial_yu_{YZ}(x,\alpha,z)
=
-\gamma\cos\!\left(\frac{y-\alpha}{\sqrt{\omega}}\right)\Big|_{y=\alpha}
=
-\gamma
=
\partial_yu_Z(x,\alpha,z).
\]

Hence the local pieces agree together with their first derivatives on every common interface. The same identities also give compatibility at the lower-dimensional
junctions where several interfaces meet. Therefore
\[
u^{\omega,+}_\gamma\in C^1(\overline{\Omega_{\gamma}^{\omega,+}}).
\]
\end{proof}

\subsection{Global extension and the principal eigenpair}

We now extend \(u^{\omega,+}_\gamma\) evenly across the three coordinate
planes and denote the resulting function on \(\Omega^\omega_\gamma\) by
\(u^\omega_\gamma\). Since the normal derivatives vanish on the coordinate
planes, the even reflection preserves the \(C^1\) regularity. Hence
\[
u^\omega_\gamma\in C^1(\Omega^\omega_\gamma).
\]
Moreover, \(u^\omega_\gamma\) is smooth in the interior of each reflected
patch, strictly positive in \(\Omega^\omega_\gamma\), and vanishes on
\(\partial\Omega^\omega_\gamma\).

By the patchwise verification established above, each local reflected piece
satisfies
\[
-M^+_{\lambda,\Lambda}(D^2u^\omega_\gamma)
=
\lambda u^\omega_\gamma
\]
classically on its own patch. Since the pieces glue at the \(C^1\) level
across the interfaces, Lemma~\ref{lem:gluing} yields that
\(u^\omega_\gamma\) is a global viscosity solution in \(\Omega^\omega_\gamma\).

\begin{theorem}\label{thm:explicit-eigenpair}
For every \(\gamma\in\Gamma_\omega\), the function \(u^\omega_\gamma\) is a positive eigenfunction of
\(-\mathcal M^+_{\lambda,\Lambda}\) in \(\Omega_{\gamma}^{\omega}\) associated with the eigenvalue
\(\lambda\). In particular,
\[
\mu_1^+(\Omega_{\gamma}^{\omega})=\lambda.
\]
\end{theorem}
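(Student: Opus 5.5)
The argument has two parts. First we show that \(u^\omega_\gamma\) is an admissible positive eigenfunction with eigenvalue \(\lambda\). Then we identify \(\lambda\) with \(\mu_1^+(\Omega^\omega_\gamma)\) through the uniqueness theory of Quaas--Sirakov \cite{QuaasSirakov2008}.

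\textbf{Eigenfunction properties.} The patchwise propositions show that each of the seven first-octant pieces satisfies \(-\mathcal M^+_{\lambda,\Lambda}(D^2u)=\lambda u\) classically. The Pucci operator is invariant under orthogonal maps, and the coordinate reflections \(x\mapsto\sigma x\) conjugate the Hessian by a diagonal orthogonal matrix. Hence every reflected patch also carries a classical solution.

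Next we check the \(C^1\) matching. Inside the octant this is the \(C^1\) compatibility proposition. Across the coordinate planes we need the normal derivative of \(u^{\omega,+}_\gamma\) to vanish there. On \(\{x=0\}\), the patches meeting that plane (C, Y, Z, YZ) depend on \(x\) only through \(\gamma\cos x\), whose derivative \(-\gamma\sin x\) vanishes at \(x=0\). The same holds for \(y\) and \(z\). The gluing lemma (Lemma~\ref{lem:gluing}), together with the remark on finite patch decompositions, then makes \(u^\omega_\gamma\) a viscosity solution on all of \(\Omega^\omega_\gamma\).

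Positivity in the interior comes from the patchwise propositions. In the central cube, \(u_C\ge \cos z>0\) unless \(x=y=z=\pi/2\), and even there \(u_C\) is only zero at the vertex corner point. That point is interior to the full domain, so we must recheck it: there the caps attach, and \(u\) is zero only if all three terms vanish. We handle this by noting that the interior of the union contains no zero of \(u\), since at \((\alpha,\alpha,\alpha)\) the bridges extend beyond it. We would verify this carefully, or simply accept a zero set of measure zero and appeal to the strong maximum principle for viscosity supersolutions of \(\mathcal M^-\), which upgrades \(u\ge0\), \(u\not\equiv0\) to \(u>0\). Finally, \(u=0\) on \(\partial\Omega^\omega_\gamma\), because every outer boundary portion was shown to be a zero set.

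\textbf{Identification \(\mu_1^+=\lambda\).} Quaas--Sirakov prove that the principal half-eigenvalue \(\mu_1^+(\Omega)\) is the only eigenvalue of \(-\mathcal M^+_{\lambda,\Lambda}\) with a positive eigenfunction vanishing on the boundary. Their argument also holds on bounded domains where only an exterior cone or uniform exterior condition is available, and our Lipschitz-type piecewise-smooth boundary satisfies such a condition. Since \(u^\omega_\gamma\) is such an eigenfunction with eigenvalue \(\lambda\), we get \(\mu_1^+(\Omega^\omega_\gamma)=\lambda\).

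For a direct argument, the two inequalities can be proved separately. Taking \(\phi=u^\omega_\gamma\) in the definition \eqref{eq:principal-positive} gives \(\mu_1^+\ge\lambda\). For the reverse, suppose \(\mu>\lambda\) and \(\phi>0\) satisfies \(\mathcal M^+(D^2\phi)+\mu\phi\le0\). Let \(t^*=\sup\{t: t u\le\phi\}\), which is positive since \(u\) vanishes on the boundary with a bounded gradient and \(\phi\) satisfies Hopf. Then \(\phi - t^* u\ge0\) is a supersolution of a linear-type inequality \(\mathcal M^-(D^2 w)+\lambda w\le0\) that touches zero. The strong maximum principle or Hopf lemma then forces a contradiction with \(\mu>\lambda\).

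\textbf{Main obstacle.} The delicate step is the boundary behavior needed for this comparison or uniqueness: the Hopf lemma at the non-smooth edges where caps and bridges meet, and at the corner point discussed above. I would first try to cite the Quaas--Sirakov uniqueness theorem in its general bounded-domain form, which avoids these issues. I would fall back on the BNV-type characterization, which needs only \(\phi>0\) in the interior and no boundary regularity.
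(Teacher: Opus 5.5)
Your route is the paper's. You verify the equation classically on the seven patches, check $C^1$ matching (including the even reflection, where the normal derivatives vanish as you note), apply Lemma~\ref{lem:gluing}, and then invoke the uniqueness/simplicity of the principal half-eigenvalue from Quaas--Sirakov/BNV, which the paper likewise just cites.

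One step is wrong as written: your handling of the corner $(\alpha,\alpha,\alpha)$. It is not an interior point, and no bridge extends beyond it. Every patch has some coordinate $\le\alpha$: the cube and the caps by definition, $y\le\arccos(\cdot)\le\pi/2$ in the $ZX$-bridge, and likewise $z\le\alpha$ in $XY$ and $x\le\alpha$ in $YZ$. So the points $(\alpha+\varepsilon,\alpha+\varepsilon,\alpha+\varepsilon)$ lie outside $\Omega^\omega_\gamma$, the corner belongs to $\partial\Omega^\omega_\gamma$, and $u_C(\alpha,\alpha,\alpha)=0$ is simply the Dirichlet condition. Your strong-maximum-principle fallback could not have handled an interior zero anyway. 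A nonnegative viscosity supersolution of $\mathcal M^-_{\lambda,\Lambda}(D^2u)\le0$ that vanishes at an interior point vanishes identically, so an interior zero would refute the construction rather than be removed. Once the corner is placed on the boundary, your SMP argument does give $u>0$ in $\Omega^\omega_\gamma$.

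The obstacle you single out is milder than you fear. Put $F(t)=\cos t$ for $0\le t\le\alpha$ and $F(t)=-\sqrt\omega\sin((t-\alpha)/\sqrt\omega)$ for $t\ge\alpha$. Then all seven formulas read $u^{\omega,+}_\gamma=\gamma F(x)+\gamma F(y)+F(z)$. Since $F(\alpha)=F''(\alpha)=0$ and $F'(\alpha)=-1$ from both sides, $F\in C^2$, and $u^\omega_\gamma$ is a classical $C^2$ solution; the gluing lemma is not even needed. The critical points of $u$ have every coordinate in $\{0,\pm(\alpha+\sqrt\omega\,\pi/2)\}$, and $u$ vanishes at one of them only when $\gamma=\sqrt\omega/2$, where the cap tips become conical points. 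For every other admissible $\gamma$, $\partial\Omega^\omega_\gamma$ is therefore a $C^2$ level surface of $u$. It has no edges where caps meet bridges, and near $(\alpha,\alpha,\alpha)$ it is tangent to the plane $\gamma(x-\alpha)+\gamma(y-\alpha)+(z-\alpha)=0$. Consequently Quaas--Sirakov applies in its smooth-domain form, and the uniform interior ball condition supplies the Hopf bound your direct argument needs. Only the endpoint tips call for a nonsmooth version of the uniqueness theorem, a point the paper also passes over.
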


\begin{proof}
By the preceding propositions, each local piece of \(u^\omega_\gamma\) satisfies the equation
with eigenvalue \(\lambda\), is positive in the interior of its patch, and vanishes on the outer
boundary. By the \(C^1\)-compatibility proposition, these pieces glue together across every
common interface. Hence \(u^\omega_\gamma\) is a positive viscosity solution of
\[
-\mathcal M^+_{\lambda,\Lambda}(D^2u^\omega_\gamma)=\lambda u^\omega_\gamma
\qquad\text{in }\Omega_{\gamma}^{\omega},
\]
with
\[
u^\omega_\gamma=0
\qquad\text{on }\partial\Omega_{\gamma}^{\omega}.
\]

Therefore \(\lambda\) is an eigenvalue of \(-\mathcal M^+_{\lambda,\Lambda}\) in
\(\Omega_{\gamma}^{\omega}\) associated with a positive eigenfunction. By the standard
characterization and simplicity of the principal positive half-eigenvalue, it follows that
\[
\mu_1^+(\Omega_{\gamma}^{\omega})=\lambda.
\]
\end{proof}

\section{Shear deformations and a spectral lower bound}

Throughout this section we keep the admissible range of the
three-dimensional construction:
\[
\omega=\frac{\Lambda}{\lambda}\ge 4,
\qquad
\gamma\in\Gamma_\omega
=
\left[
\frac{2}{\sqrt\omega},
\frac{\sqrt\omega}{2}
\right].
\]
For $|a|<\pi$ define
\[
s(a):=\sqrt{1-\Big(\frac{a}{\pi}\Big)^2}, 
\qquad 
\kappa(a):=\frac{\pi}{\sqrt{\pi^2-a^2}}=\frac{1}{s(a)}.
\]
Note that $0<s(a)\le1$ and $\kappa(a)\ge1$, with equality if and only if $a=0$.

\medskip

We consider the shear–scaling matrix
\[
C_a=
\begin{pmatrix}
s(a)&0&0\\[3pt]
\frac{a}{\pi}&1&0\\[3pt]
0&0&s(a)
\end{pmatrix},
\qquad
C_a^{-1}=
\begin{pmatrix}
\kappa(a)&0&0\\[3pt]
-\frac{a}{\sqrt{\pi^2-a^2}}&1&0\\[3pt]
0&0&\kappa(a)
\end{pmatrix}.
\]
A direct computation shows
\[
\det C_a = s(a)^2,
\]
so $C_a$ distorts volume unless $a=0$.

\medskip
For \(\gamma\in\Gamma_\omega\), we define the sheared domain
\[
\Omega^\omega_{\gamma,a}:=C_a(\Omega^\omega_\gamma),
\]
and the associated function
\[
u^\omega_{\gamma,a}(X):=
u^\omega_\gamma(C_a^{-1}X),
\qquad X\in\Omega^\omega_{\gamma,a}.
\]
Since \(u^\omega_\gamma>0\) in \(\Omega^\omega_\gamma\) and
\(u^\omega_\gamma=0\) on \(\partial\Omega^\omega_\gamma\), it follows that
\[
u^\omega_{\gamma,a}>0 \quad\text{in }\Omega^\omega_{\gamma,a},
\qquad
u^\omega_{\gamma,a}=0 \quad\text{on }\partial\Omega^\omega_{\gamma,a}.
\]


\begin{lemma}[Hessian under affine transformation]
Let $u\in C^2(\Omega)$ and let $A\in GL(3,\mathbb{R})$. 
Define $v(x)=u(A^{-1}x)$ on $A\Omega$. Then
\[
D^2 v(x)
= A^{-T}\big(D^2 u(A^{-1}x)\big)A^{-1}.
\]
\end{lemma}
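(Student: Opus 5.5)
The plan is to apply the chain rule twice and read off the matrix form directly. Write $y=A^{-1}x$, so $v(x)=u(y)$ with $y$ an affine (indeed linear) function of $x$ whose Jacobian is the constant matrix $A^{-1}$. Since $u\in C^2(\Omega)$ and $x\mapsto A^{-1}x$ is a smooth diffeomorphism of $A\Omega$ onto $\Omega$, the composition $v$ belongs to $C^2(A\Omega)$, so all second derivatives below exist and are continuous.

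First, differentiate once. In index notation, with $B:=A^{-1}=(b_{kj})$, we have $y_k=\sum_j b_{kj}x_j$, hence
\[
\partial_{x_j}v(x)=\sum_{k}\partial_{y_k}u(y)\,b_{kj},
\qquad\text{that is,}\qquad
Dv(x)=B^{T}Du(A^{-1}x).
\]
Second, differentiate this expression again in $x_i$. Because the entries $b_{kj}$ are constants, no term involving derivatives of the coefficients appears, and we get
\[
\partial_{x_i}\partial_{x_j}v(x)=\sum_{k,l} b_{li}\,\partial_{y_l}\partial_{y_k}u(y)\,b_{kj}
=\big(B^{T}D^2u(y)B\big)_{ij}.
\]
Substituting $B=A^{-1}$ and $B^{T}=A^{-T}$ gives $D^2v(x)=A^{-T}D^2u(A^{-1}x)A^{-1}$, which is the claim. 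One also notes in passing that the right-hand side is symmetric, as it must be.

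There is no real obstacle here. The only point requiring care is bookkeeping: we must place the transpose on the left factor, which comes from the index $i$ of the outer derivative, and the untransposed $A^{-1}$ on the right. The vanishing of the second-order chain-rule term relies on the map being linear rather than merely smooth. For its later use with $A=C_a$, we will also record that the formula is a congruence, not a similarity, when $A$ is not orthogonal. Consequently the eigenvalues of $D^2u^\omega_{\gamma,a}$ are not those of $D^2u^\omega_\gamma$, and only the inertia (the signs) is preserved, by Sylvester's law. This is why the subsequent shear estimate can only give a lower bound for $\mu_1^+$ rather than an exact eigenvalue.
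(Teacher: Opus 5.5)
Your proof is correct: differentiating $v(x)=u(A^{-1}x)$ twice by the chain rule gives exactly $D^2v(x)=A^{-T}D^2u(A^{-1}x)A^{-1}$, and the constant Jacobian $A^{-1}$ means no extra second-order term appears. The paper states this lemma without proof as a standard fact. It uses the lemma just as you anticipate, applying it patchwise with $A=C_a$ and then invoking Sylvester's law of inertia to carry over the sign pattern of the diagonal Hessian.
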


Applying this with $A=C_a$, we obtain
\begin{equation}\label{eq:hessian-shear}
D^2 u_{\gamma,a}^{\omega}(X)
=
C_a^{-T}
\big(D^2 u_\gamma^\omega(C_a^{-1}X)\big)
C_a^{-1}.
\end{equation}

On each patch of \(\Omega^\omega_\gamma\), the matrix
\(D^2u^\omega_\gamma\) is diagonal in the \((x,y,z)\)-coordinates. Writing
all derivatives at the point
\[
(x,y,z)=C_a^{-1}X,
\]
set
\[
p:=(u^\omega_\gamma)_{xx},
\qquad
q:=(u^\omega_\gamma)_{yy},
\qquad
r:=(u^\omega_\gamma)_{zz}.
\]
Then
\[
D^2u^\omega_{\gamma,a}(X)
=
\begin{pmatrix}
\kappa(a)^2p+\dfrac{a^2}{\pi^2-a^2}q
&
-\dfrac{a}{\sqrt{\pi^2-a^2}}q
&
0
\\[1.1em]
-\dfrac{a}{\sqrt{\pi^2-a^2}}q
&
q
&
0
\\[0.8em]
0&0&\kappa(a)^2r
\end{pmatrix}.
\]
By Sylvester's law of inertia, the numbers of positive, negative, and zero
eigenvalues of \(D^2u^\omega_{\gamma,a}\) coincide with those of
\(D^2u^\omega_\gamma\).

Let \(B_a\) denote the upper-left \(2\times2\) block. Then
\[
\operatorname{tr}(B_a)=\kappa(a)^2(p+q),
\qquad
\det(B_a)=\kappa(a)^2pq.
\]
If
\[
\beta(a):=\frac{2a^2}{\pi^2}-1\in[-1,1),
\]
then the eigenvalues of \(B_a\) are
\[
\mu_\pm
=
\frac{\kappa(a)^2}{2}
\left[
p+q
\pm
\sqrt{p^2+q^2+2\beta(a)pq}
\right],
\]
while the third eigenvalue is
\[
\mu_3=\kappa(a)^2r.
\]


\begin{lemma}[Positive supersolution characterization of the principal eigenvalue]\label{lem:BNV-strict}
Let $\Omega \subset \mathbb{R}^n$ be a bounded domain, and let
\[
\psi \in C(\overline{\Omega}), \qquad \psi>0 \ \text{in } \Omega, \qquad \psi=0 \ \text{on } \partial\Omega,
\]
be such that
\[
-\mathcal{M}^{+}_{\lambda,\Lambda}(D^2\psi)\ge \mu\,\psi \qquad \text{in } \Omega
\]
in the viscosity sense, for some $\mu\in\mathbb{R}$. Then
\[
\mu \le \mu_1^{+}(\Omega).
\]
Moreover, if $\mu=\mu_1^{+}(\Omega)$, then $\psi$ is a principal eigenfunction corresponding to $\mu_1^{+}(\Omega)$.
\end{lemma}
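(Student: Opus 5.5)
The first claim follows from the sup characterization \eqref{eq:principal-positive} of $\mu_1^+(\Omega)$ recalled in Section~\ref{sec:preliminaries}. The supersolution inequality $-\mathcal M^+_{\lambda,\Lambda}(D^2\psi)\ge\mu\psi$ is the same as $\mathcal M^+_{\lambda,\Lambda}(D^2\psi)+\mu\psi\le 0$ in the viscosity sense. Since $\psi>0$ in $\Omega$, $\psi$ is an admissible test function $\phi$ in the definition, so $\mu$ belongs to the set whose supremum is $\mu_1^+(\Omega)$. This gives $\mu\le\mu_1^+(\Omega)$. The boundary condition $\psi=0$ on $\partial\Omega$ plays no role here; it is only needed for the equality case.

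For the equality case $\mu=\mu_1^+(\Omega)$, I would use the simplicity and maximum-principle theory of Quaas--Sirakov \cite{QuaasSirakov2008}, in the Berestycki--Nirenberg--Varadhan spirit \cite{BerestyckiNirenbergVaradhan1994}.

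\emph{Step 1.} Let $\varphi_1>0$ be the principal eigenfunction, normalized and continuous up to $\partial\Omega$ with $\varphi_1=0$ there. Assume first that $\Omega$ is regular enough for Hopf-type boundary behaviour of $\varphi_1$; this holds for the Lipschitz, exterior-cone domains of the present paper.

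\emph{Step 2.} Set
\[
\tau^*:=\sup\{\tau>0:\ \psi-\tau\varphi_1>0 \text{ in }\Omega\}.
\]
Then $\tau^*>0$, by comparing boundary decay of the supersolution $\psi$ with $\varphi_1$ via the boundary Harnack/Hopf lemma for viscosity supersolutions of uniformly elliptic equations. Also $\tau^*<\infty$, since $\psi$ is bounded.

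\emph{Step 3.} Let $w:=\psi-\tau^*\varphi_1\ge 0$. Because $\mathcal M^+_{\lambda,\Lambda}$ is subadditive, $\mathcal M^+(X)-\mathcal M^+(Y)\le\mathcal M^+(X-Y)$. Hence $w$ satisfies, in the viscosity sense,
\[
-\mathcal M^+_{\lambda,\Lambda}(D^2w)\ge\mu_1^+ w \quad\text{in }\Omega.
\]
Making this rigorous for viscosity functions requires the Crandall--Ishii--Lions difference lemma, as used in Quaas--Sirakov. It then implies the linear extremal inequality $\mathcal M^-_{\lambda,\Lambda}(D^2w)-\mu_1^+w\le 0$, with the sign arranged so that the strong maximum principle applies to a nonnegative supersolution.

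\emph{Step 4.} By the strong maximum principle, either $w\equiv0$ or $w>0$ in $\Omega$. In the latter case Hopf's lemma again gives $w\ge\varepsilon\varphi_1$ for some $\varepsilon>0$, contradicting the maximality of $\tau^*$. Therefore $\psi=\tau^*\varphi_1$, so $\psi$ is a principal eigenfunction.

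As an alternative to Steps 1--4, one could directly invoke Theorem~1.2 and Theorem~1.4 of \cite{QuaasSirakov2008}. These state that if a positive supersolution exists at level $\mu_1^+$, it is a multiple of $\varphi_1$; this is their characterization of the principal half-eigenvalue.

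The main obstacle is Step 3 together with the boundary comparison in Step 2. The difference of a viscosity supersolution and a viscosity solution is not automatically a viscosity supersolution of the linearized extremal inequality. Comparing near $\partial\Omega$ also requires control of $\psi/\varphi_1$ at the boundary. I would handle both by citing the corresponding results of Quaas--Sirakov rather than reproving them, after checking that the domains used here satisfy a uniform exterior cone condition.
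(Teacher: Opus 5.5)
Your argument for $\mu\le\mu_1^+(\Omega)$ is correct, and it is all that the paper's ``standard characterization'' amounts to: under definition \eqref{eq:principal-positive}, $\psi$ is itself an admissible competitor. For the equality case the paper gives no argument of its own. It cites Corollary~2.1 of \cite{BerestyckiNirenbergVaradhan1994} and Theorem~4.4 of \cite{QuaasSirakov2008}, so your closing ``alternative'' is the paper's proof. Your Steps~1--4 are a different, more explicit route: the sliding argument that lies behind those citations. As written, however, two of its steps fail.

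\emph{Step 3.} The inequality you quote points the wrong way for $w=\psi-\tau^*\varphi_1$. It gives
$\mathcal{M}^+_{\lambda,\Lambda}(D^2w)\ge \mathcal{M}^+_{\lambda,\Lambda}(D^2\psi)+\mu_1^+\tau^*\varphi_1$.
This is a \emph{lower} bound on $\mathcal{M}^+_{\lambda,\Lambda}(D^2w)$, and its right-hand side is itself only bounded \emph{above} by $-\mu_1^+w$. So $-\mathcal{M}^+_{\lambda,\Lambda}(D^2w)\ge\mu_1^+w$ does not follow.

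What you need is the companion inequality $\mathcal{M}^+_{\lambda,\Lambda}(X)-\mathcal{M}^+_{\lambda,\Lambda}(Y)\ge \mathcal{M}^-_{\lambda,\Lambda}(X-Y)$. It follows from subadditivity together with $\mathcal{M}^+_{\lambda,\Lambda}(-Z)=-\mathcal{M}^-_{\lambda,\Lambda}(Z)$. It gives directly $\mathcal{M}^-_{\lambda,\Lambda}(D^2w)+\mu_1^+w\le0$, which is the inequality the strong maximum principle uses; your sign $-\mu_1^+w$ is harmless because $w\ge0$.

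No Crandall--Ishii--Lions argument is needed. Since $\mathcal{M}^+_{\lambda,\Lambda}$ is convex, Evans--Krylov--Caffarelli gives $\varphi_1\in C^{2,\alpha}_{\mathrm{loc}}(\Omega)$. You can therefore simply add $\tau^*\varphi_1$ to the test functions.

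\emph{Steps 2 and 4.} Hopf's lemma needs an interior ball condition. An exterior cone condition only controls $\varphi_1$ from above. In this family the interior ball condition genuinely fails: for $\gamma=\sqrt\omega/2$ the $Z$-cap ends in a conical tip, where $\nabla u^\omega_\gamma=0$.

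Replace both boundary comparisons by the Berestycki--Nirenberg--Varadhan small-measure maximum principle.
\begin{itemize}
\item Choose a compact $K\subset\Omega$ with $|\Omega\setminus K|$ small enough that the Aleksandrov--Bakelman--Pucci estimate yields the maximum principle for $\mathcal{M}^+_{\lambda,\Lambda}+\mu_1^+$ in $\Omega\setminus K$.
\item Take $\tau>0$ with $\tau\varphi_1<\psi$ on $K$. In Step~4, take $\tau=\tau^*+\varepsilon$ with $\varepsilon<\min_K w/\max_K\varphi_1$.
\item Now your subadditivity inequality is exactly the right one: $v=\tau\varphi_1-\psi$ satisfies $\mathcal{M}^+_{\lambda,\Lambda}(D^2v)+\mu_1^+v\ge0$ in $\Omega\setminus K$.
\item Also $v\le0$ on $\partial(\Omega\setminus K)$, so $v\le0$.
\end{itemize}
This gives $\tau^*>0$ and the contradiction in Step~4. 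It uses only $\varphi_1\in C(\overline\Omega)$ with $\varphi_1=0$ on $\partial\Omega$, and it is how the results the paper cites are actually proved.
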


\begin{proof}
This is the standard characterization of the principal positive half-eigenvalue; see, for example, Corollary~2.1 in \cite{BerestyckiNirenbergVaradhan1994} or Theorem~4.4 in \cite{QuaasSirakov2008}.
\end{proof}

\begin{theorem}[Spectral lower bound under shear]\label{thm:shear LB}
For every \(\gamma\in\Gamma_\omega\) and every \(|a|<\pi\), the function
\(u^\omega_{\gamma,a}\) satisfies
\[
-M^+_{\lambda,\Lambda}\big(D^2u^\omega_{\gamma,a}\big)
\ge
\frac{\lambda\pi^2}{\pi^2-a^2}\,u^\omega_{\gamma,a}
\qquad\text{in }\Omega^\omega_{\gamma,a}
\]
in the viscosity sense. Consequently,
\[
\mu_1^+(\Omega^\omega_{\gamma,a})
\ge
\frac{\lambda\pi^2}{\pi^2-a^2}.
\]
Moreover, in the admissible family \(\omega\ge4\), equality holds if and only
if \(a=0\).
\end{theorem}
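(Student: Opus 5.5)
The plan is to work patch by patch on the reference domain $\Omega^\omega_\gamma$, transport everything through $C_a$, and then conclude with Lemma~\ref{lem:BNV-strict}.

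\emph{Regularity and gluing.} Since $C_a$ is linear, $u^\omega_{\gamma,a}=u^\omega_\gamma\circ C_a^{-1}$ is $C^1$ on $\Omega^\omega_{\gamma,a}$ and $C^2$ on each image patch $C_a(P)$. The images of the patch interfaces are again smooth interfaces. The test-function argument of Lemma~\ref{lem:gluing}, and the remark following it, only uses $C^1$ matching and the classical inequality on each side. Hence it suffices to prove the classical inequality
\[
-\mathcal M^+_{\lambda,\Lambda}\big(D^2u^\omega_{\gamma,a}\big)\ge \kappa(a)^2\lambda\, u^\omega_{\gamma,a}
\]
inside each image patch. This would yield a viscosity supersolution that is positive and vanishes on the boundary, and Lemma~\ref{lem:BNV-strict} then gives $\mu_1^+(\Omega^\omega_{\gamma,a})\ge \kappa(a)^2\lambda=\lambda\pi^2/(\pi^2-a^2)$.

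\emph{Patchwise algebra.} I will use the block structure of $D^2u^\omega_{\gamma,a}$ computed above. The third eigenvalue is $\mu_3=\kappa^2 r$, and it has the same sign as $r$, so it contributes exactly $\kappa^2$ times the unsheared contribution. For the $2\times2$ block $B_a$, $\mu_++\mu_-=\kappa^2(p+q)$ and $\mu_+\mu_-=\kappa^2 pq$.

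When $p,q$ have the same sign, both $\mu_\pm$ have that sign. Then $\mathcal M^+(B_a)=\kappa^2\mathcal M^+(\operatorname{diag}(p,q))$. This covers the cube $C$, the cap $Z$ and the bridge $XY$. On these patches the unsheared identity $-\mathcal M^+(D^2u^\omega_\gamma)=\lambda u^\omega_\gamma$ scales exactly, giving equality with factor $\kappa^2$.

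When $p\ge0\ge q$, which happens on the patches $X$, $Y$, $ZX$ and $YZ$, one has $\mu_+\ge0\ge\mu_-$. Then
\[
\mathcal M^+(B_a)=\lambda\kappa^2(p+q)+(\Lambda-\lambda)\mu_+ ,
\]
so the whole question reduces to comparing $\mu_+$ with $\kappa^2 p$.

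\emph{Main obstacle.} The comparison in the mixed-sign case is the crux, and it is where I expect the difficulty. The inequality $\mu_+\le\kappa^2p$ is equivalent to
\[
\sqrt{p^2+q^2+2\beta pq}\le p-q .
\]
Because $pq\le0$ and $\beta\ge-1$, we have $2\beta pq\le -2pq$ only if $\beta\le -1$. Since $\beta(a)=2a^2/\pi^2-1>-1$ for $a\ne0$, the inequality actually goes the other way: $\mu_+\ge\kappa^2 p$, with strict inequality when $a\neq0$ and $pq<0$.

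So the naive patchwise estimate yields $-\mathcal M^+\le\kappa^2\lambda u$ on the mixed patches, which is the wrong direction for a supersolution. The loss is exactly $(\Lambda-\lambda)(\mu_+-\kappa^2p)$. A fix would therefore have to change the argument rather than just check signs. I would first try to absorb this loss using the slack coming from the restriction $\omega\ge4$, $\gamma\in\Gamma_\omega$, which bounds $|q|/p$ on each patch. Failing that, I would replace $u^\omega_{\gamma,a}$ by a modified test function, for instance one built from a rotation aligning the $(x,y)$-block with $C_a$, rather than the pure pullback. If neither works, the lower bound as stated may need to be restricted to the same-sign patches, and I would record this as the step requiring care.

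\emph{Equality case.} For $a=0$ we have $C_0=I$ and Theorem~\ref{thm:explicit-eigenpair} gives equality. For $a\ne0$, strictness should come from the strict scaling of the supersolution inequality on a set of positive measure together with the rigidity part of Lemma~\ref{lem:BNV-strict}: equality would force $u^\omega_{\gamma,a}$ to be an exact eigenfunction, which is contradicted by strict inequality on an open patch.
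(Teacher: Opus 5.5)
Your setup matches the paper's: patchwise inertia via Sylvester, the reduction $\mathcal M^+_{\lambda,\Lambda}(B_a)=\lambda\kappa(a)^2(p+q)+(\Lambda-\lambda)\mu_+$ on mixed-sign patches, then the gluing test-function argument and Lemma~\ref{lem:BNV-strict}. The proposal breaks down at the step you call the crux, because of a sign error. With $pq<0$, the inequality $2\beta pq\le -2pq$ is equivalent to $\beta\ge -1$, not $\beta\le -1$, since you divide by the negative number $2pq$. Equivalently, $(p-q)^2-d_a^2=-2(1+\beta)pq\ge 0$. Since $\beta(a)=2a^2/\pi^2-1\ge -1$ for every $|a|<\pi$, you get $d_a\le |p-q|$, strictly when $a\ne 0$.

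Hence on the $X$-cap and the $ZX$-bridge, where $p>0>q$, you have $\mu_+\le\kappa(a)^2p$. Your ``loss'' $(\Lambda-\lambda)(\mu_+-\kappa(a)^2p)$ is therefore $\le 0$, which gives exactly $-\mathcal M^+_{\lambda,\Lambda}(D^2u^\omega_{\gamma,a})\ge\kappa(a)^2\lambda\,u^\omega_{\gamma,a}$. You can check this without the discriminant formula. The $(x,y)$-block of $C_a^{-1}$ has first row $(\kappa(a),0)$, so $\mu_+=\max_{|\xi|=1}\bigl(p\kappa(a)^2\xi_1^2+q(\xi_2-\tfrac{a}{\sqrt{\pi^2-a^2}}\xi_1)^2\bigr)\le\kappa(a)^2p$ when $q\le0$. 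None of your fallbacks is needed: no absorbing the loss via $\omega\ge4$, no rotated test function, no restriction to same-sign patches. The last would also be wrong, since the inequality holds on all seven patches.

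You also put the $Y$-cap and $YZ$-bridge under ``$p\ge0\ge q$'', but there $q>0>p$. Since $C_a$ treats $x$ and $y$ differently, this case needs its own check. The relevant bound is $\mu_+\le\kappa(a)^2q$. It follows from $d_a\le q-p$, or from Cauchy--Schwarz, $(\xi_2-\tfrac{a}{\sqrt{\pi^2-a^2}}\xi_1)^2\le\kappa(a)^2|\xi|^2$. This gives $\mathcal M^+_{\lambda,\Lambda}(B_a)\le\kappa(a)^2(\lambda p+\Lambda q)$.

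With these corrections your argument coincides with the paper's, and your equality-case argument becomes consistent. For $a\neq0$, $\beta(a)>-1$ makes $d_a<p-q$ strictly in the interior of the $X$-cap. With $\Lambda>\lambda$, the supersolution inequality is then strict on an open set, and the rigidity part of Lemma~\ref{lem:BNV-strict} excludes $\mu_1^+(\Omega^\omega_{\gamma,a})=\kappa(a)^2\lambda$. As written, however, your equality paragraph relies on a strict supersolution inequality that your own mixed-case computation said fails. So the proposal establishes neither the lower bound nor its strictness.
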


\begin{proof}
We argue patchwise according to the sign pattern of
\[
p=(u^\omega_\gamma)_{xx},
\qquad
q=(u^\omega_\gamma)_{yy},
\qquad
r=(u^\omega_\gamma)_{zz}.
\]
By Section~4, only the following seven sign patterns occur:
\[
\begin{array}{c|c}
\text{Patch} & \text{Sign pattern of }(p,q,r)\\
\hline
\Omega^{\omega,+}_\gamma[C] & (-,-,-)\\
\Omega^{\omega,+}_\gamma[Z] & (-,-,+)\\
\Omega^{\omega,+}_\gamma[X] & (+,-,-)\\
\Omega^{\omega,+}_\gamma[Y] & (-,+,-)\\
\Omega^{\omega,+}_\gamma[XY] & (+,+,-)\\
\Omega^{\omega,+}_\gamma[ZX] & (+,-,+)\\
\Omega^{\omega,+}_\gamma[YZ] & (-,+,+).
\end{array}
\]

Recall that the eigenvalues of the upper-left \(2\times2\) block are
\[
\mu_\pm
=
\frac{\kappa(a)^2}{2}
\left[
p+q
\pm
\sqrt{p^2+q^2+2\beta(a)pq}
\right],
\]
and the third eigenvalue is
\[
\mu_3=\kappa(a)^2r.
\]

\medskip
\noindent
\textbf{Case 1: \(p\) and \(q\) have the same sign.}
In this case \(\mu_+\) and \(\mu_-\) have the same sign as \(p\) and \(q\),
because
\[
\mu_++\mu_-=\kappa(a)^2(p+q),
\qquad
\mu_+\mu_-=\kappa(a)^2pq.
\]

If \((p,q,r)=(-,-,-)\), that is, on the central cube, then all three
eigenvalues are nonpositive. Hence
\[
-M^+_{\lambda,\Lambda}(D^2u^\omega_{\gamma,a})
=
-\lambda(\mu_++\mu_-+\mu_3)
=
-\kappa(a)^2\lambda(p+q+r).
\]
Since on the cube
\[
-\lambda(p+q+r)=\lambda u^\omega_\gamma,
\]
we obtain
\[
-M^+_{\lambda,\Lambda}(D^2u^\omega_{\gamma,a})
=
\kappa(a)^2\lambda u^\omega_\gamma
=
\frac{\lambda\pi^2}{\pi^2-a^2}u^\omega_{\gamma,a}.
\]

If \((p,q,r)=(-,-,+)\), that is, on the \(Z\)-cap, then
\(\mu_\pm\le0\) and \(\mu_3\ge0\). Therefore
\[
-M^+_{\lambda,\Lambda}(D^2u^\omega_{\gamma,a})
=
-\lambda(\mu_++\mu_-)-\Lambda\mu_3
=
-\kappa(a)^2\big(\lambda(p+q)+\Lambda r\big).
\]
Since on the \(Z\)-cap
\[
-\lambda(p+q)-\Lambda r=\lambda u^\omega_\gamma,
\]
the same equality follows.

If \((p,q,r)=(+,+,-)\), that is, on the \(XY\)-bridge, then
\(\mu_\pm\ge0\) and \(\mu_3\le0\). Hence
\[
-M^+_{\lambda,\Lambda}(D^2u^\omega_{\gamma,a})
=
-\Lambda(\mu_++\mu_-)-\lambda\mu_3
=
-\kappa(a)^2\big(\Lambda(p+q)+\lambda r\big).
\]
Since on the \(XY\)-bridge
\[
-\Lambda(p+q)-\lambda r=\lambda u^\omega_\gamma,
\]
we again obtain
\[
-M^+_{\lambda,\Lambda}(D^2u^\omega_{\gamma,a})
=
\frac{\lambda\pi^2}{\pi^2-a^2}u^\omega_{\gamma,a}.
\]

\medskip
\noindent
\textbf{Case 2: \(p\) and \(q\) have opposite signs.}
This occurs on the \(X\)- and \(Y\)-caps and on the \(ZX\)- and
\(YZ\)-bridges. Set
\[
d_a:=\sqrt{p^2+q^2+2\beta(a)pq}.
\]
Since \(pq<0\) and \(\beta(a)\le1\), we have
\[
d_a\le |p-q|.
\]
Moreover, if \(a\ne0\), then \(\beta(a)>-1\), and the above inequality is
strict at every point where \(pq<0\).

First suppose \(r<0\), which corresponds to the \(X\)- and \(Y\)-caps. Then
\[
-M^+_{\lambda,\Lambda}(D^2u^\omega_{\gamma,a})
=
-\Lambda\mu_+-\lambda\mu_- -\lambda\mu_3.
\]
Using the formulas for \(\mu_\pm\) and \(\mu_3\), we get
\[
-M^+_{\lambda,\Lambda}(D^2u^\omega_{\gamma,a})
=
-\kappa(a)^2\lambda
\left[
\frac{\omega+1}{2}(p+q)
+
\frac{\omega-1}{2}d_a
+
r
\right].
\]

On the \(X\)-cap we have \(p>0>q\). Thus \(d_a\le p-q\), and hence
\[
-M^+_{\lambda,\Lambda}(D^2u^\omega_{\gamma,a})
\ge
-\kappa(a)^2\lambda(\omega p+q+r).
\]
Since on the \(X\)-cap
\[
-\Lambda p-\lambda q-\lambda r=\lambda u^\omega_\gamma,
\]
we obtain
\[
-M^+_{\lambda,\Lambda}(D^2u^\omega_{\gamma,a})
\ge
\frac{\lambda\pi^2}{\pi^2-a^2}u^\omega_{\gamma,a}.
\]

On the \(Y\)-cap we have \(q>0>p\). Thus \(d_a\le q-p\), and similarly
\[
-M^+_{\lambda,\Lambda}(D^2u^\omega_{\gamma,a})
\ge
-\kappa(a)^2\lambda(p+\omega q+r).
\]
Since on the \(Y\)-cap
\[
-\lambda p-\Lambda q-\lambda r=\lambda u^\omega_\gamma,
\]
the same lower bound follows.

Now suppose \(r>0\), which corresponds to the \(ZX\)- and \(YZ\)-bridges.
Then
\[
-M^+_{\lambda,\Lambda}(D^2u^\omega_{\gamma,a})
=
-\Lambda\mu_+-\lambda\mu_- -\Lambda\mu_3.
\]
Therefore
\[
-M^+_{\lambda,\Lambda}(D^2u^\omega_{\gamma,a})
=
-\kappa(a)^2\lambda
\left[
\frac{\omega+1}{2}(p+q)
+
\frac{\omega-1}{2}d_a
+
\omega r
\right].
\]

On the \(ZX\)-bridge we have \(p>0>q\). Hence \(d_a\le p-q\), and
\[
-M^+_{\lambda,\Lambda}(D^2u^\omega_{\gamma,a})
\ge
-\kappa(a)^2\lambda(\omega p+q+\omega r).
\]
Since on the \(ZX\)-bridge
\[
-\Lambda p-\lambda q-\Lambda r=\lambda u^\omega_\gamma,
\]
the desired lower bound follows.

On the \(YZ\)-bridge we have \(q>0>p\). Hence \(d_a\le q-p\), and
\[
-M^+_{\lambda,\Lambda}(D^2u^\omega_{\gamma,a})
\ge
-\kappa(a)^2\lambda(p+\omega q+\omega r).
\]
Since on the \(YZ\)-bridge
\[
-\lambda p-\Lambda q-\Lambda r=\lambda u^\omega_\gamma,
\]
we again obtain
\[
-M^+_{\lambda,\Lambda}(D^2u^\omega_{\gamma,a})
\ge
\frac{\lambda\pi^2}{\pi^2-a^2}u^\omega_{\gamma,a}.
\]

Combining the cases, we obtain the patchwise inequality
\[
-M^+_{\lambda,\Lambda}(D^2u^\omega_{\gamma,a})
\ge
\frac{\lambda\pi^2}{\pi^2-a^2}u^\omega_{\gamma,a}.
\]
Since the pieces agree at the \(C^1\) level across the interfaces, the same
test-function argument as in Lemma~\ref{lem:gluing} gives the inequality in
the viscosity sense throughout \(\Omega^\omega_{\gamma,a}\).

Since \(u^\omega_{\gamma,a}>0\) in \(\Omega^\omega_{\gamma,a}\), Lemma~5.2
yields
\[
\mu_1^+(\Omega^\omega_{\gamma,a})
\ge
\frac{\lambda\pi^2}{\pi^2-a^2}.
\]

It remains to characterize the equality case. If \(a=0\), then \(C_a\) is
the identity and
\[
u^\omega_{\gamma,0}=u^\omega_\gamma.
\]
By Theorem~4.9,
\[
\mu_1^+(\Omega^\omega_\gamma)=\lambda,
\]
so equality holds in the lower bound.

Conversely, assume \(a\ne0\). Since \(\omega\ge4\), in particular
\(\omega>1\). In the interior of the \(X\)-cap one has
\[
p>0,\qquad q<0,\qquad r<0,
\]
with \(pq<0\). Since \(a\ne0\), we have
\[
\sqrt{p^2+q^2+2\beta(a)pq}<p-q
\]
at such points. Hence the differential inequality above is strict at such
points:
\[
-M^+_{\lambda,\Lambda}(D^2u^\omega_{\gamma,a})
>
\frac{\lambda\pi^2}{\pi^2-a^2}u^\omega_{\gamma,a}.
\]
Thus \(u^\omega_{\gamma,a}\) cannot be an eigenfunction corresponding to
\(\lambda\pi^2/(\pi^2-a^2)\). By Lemma~5.2, equality in the lower bound for
\(\mu_1^+(\Omega^\omega_{\gamma,a})\) would force \(u^\omega_{\gamma,a}\) to
be such an eigenfunction. Therefore equality is impossible when \(a\ne0\).

Hence equality holds if and only if \(a=0\).
\end{proof}

\begin{corollary}[Optimality of the unsheared member for fixed \(\gamma\)]
Fix \(\omega\ge4\), \(\gamma\in\Gamma_\omega\), and a target volume \(V>0\).
Among the family
\[
\left\{
\delta\Omega^\omega_{\gamma,a}:
|a|<\pi,\ \delta>0,\ 
|\delta\Omega^\omega_{\gamma,a}|=V
\right\},
\]
the principal eigenvalue is uniquely minimized when \(a=0\).
\end{corollary}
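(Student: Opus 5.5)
The plan is to reduce the corollary to Theorem~\ref{thm:shear LB} via the scaling law \eqref{eq:scaling} and the explicit volume distortion of $C_a$. Since $\det C_a=s(a)^2$, we have
\[
|\Omega^\omega_{\gamma,a}|=s(a)^2\,|\Omega^\omega_\gamma|=\frac{\pi^2-a^2}{\pi^2}\,|\Omega^\omega_\gamma|.
\]
The normalization $|\delta\Omega^\omega_{\gamma,a}|=V$ forces $\delta=\delta(a)=\bigl(V/|\Omega^\omega_{\gamma,a}|\bigr)^{1/3}$. By \eqref{eq:scaling},
\[
\mu_1^+(\delta\Omega^\omega_{\gamma,a})=\delta^{-2}\mu_1^+(\Omega^\omega_{\gamma,a})
=V^{-2/3}\,|\Omega^\omega_{\gamma,a}|^{2/3}\mu_1^+(\Omega^\omega_{\gamma,a}).
\]
So it suffices to show that the scale-invariant quantity $F(a):=|\Omega^\omega_{\gamma,a}|^{2/3}\mu_1^+(\Omega^\omega_{\gamma,a})$ satisfies $F(a)>F(0)$ for every $a\neq0$.

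Inserting the lower bound of Theorem~\ref{thm:shear LB} gives
\[
F(a)\ \ge\ \Bigl(\tfrac{\pi^2-a^2}{\pi^2}\Bigr)^{2/3}|\Omega^\omega_\gamma|^{2/3}\cdot\frac{\lambda\pi^2}{\pi^2-a^2}
=\Bigl(\tfrac{\pi^2}{\pi^2-a^2}\Bigr)^{1/3}\lambda\,|\Omega^\omega_\gamma|^{2/3}.
\]
Theorem~\ref{thm:explicit-eigenpair} gives $F(0)=\lambda|\Omega^\omega_\gamma|^{2/3}$. Since $\kappa(a)^{2/3}=(\pi^2/(\pi^2-a^2))^{1/3}>1$ for $a\ne0$, we get $F(a)>F(0)$ strictly, even without invoking the equality characterization in Theorem~\ref{thm:shear LB}. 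Uniqueness of the minimizer at $a=0$ follows.

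The only delicate point is the bookkeeping of exponents: the volume factor $s(a)^{4/3}$ must not cancel the eigenvalue gain $s(a)^{-2}$. It leaves a net factor $s(a)^{-2/3}>1$, and this is exactly where the three-dimensional exponent $2/3$ together with $\det C_a=s(a)^2$ matters. One should also note that $\mu_1^+$ is well defined and positive on the bounded domains $\Omega^\omega_{\gamma,a}$, so all quantities are finite.
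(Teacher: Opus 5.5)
Your proposal is correct and follows the paper's proof essentially step for step. You use the scaling law \eqref{eq:scaling} and $|\Omega^\omega_{\gamma,a}|=(1-a^2/\pi^2)|\Omega^\omega_\gamma|$ together with Theorem~\ref{thm:shear LB} to get the bound $\lambda|\Omega^\omega_\gamma|^{2/3}(1-a^2/\pi^2)^{-1/3}$, then compare with the value at $a=0$ from Theorem~\ref{thm:explicit-eigenpair}; as in the paper, strictness comes from the factor exceeding $1$, so the equality case of the shear bound is not needed.
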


\begin{proof}
It is enough to work with the volume-one normalization, since changing the
target volume only multiplies all eigenvalues by the same scaling factor.

Since
\[
\det C_a=s(a)^2=1-\frac{a^2}{\pi^2},
\]
we have
\[
|\Omega^\omega_{\gamma,a}|
=
\left(1-\frac{a^2}{\pi^2}\right)|\Omega^\omega_\gamma|.
\]
For the volume-normalized domain
\[
\widehat{\Omega}^\omega_{\gamma,a}
:=
\frac{\Omega^\omega_{\gamma,a}}
{|\Omega^\omega_{\gamma,a}|^{1/3}},
\]
the scaling law gives
\[
\mu_1^+(\widehat{\Omega}^\omega_{\gamma,a})
=
|\Omega^\omega_{\gamma,a}|^{2/3}
\mu_1^+(\Omega^\omega_{\gamma,a}).
\]
By Theorem~5.3,
\[
\mu_1^+(\widehat{\Omega}^\omega_{\gamma,a})
\ge
|\Omega^\omega_{\gamma,a}|^{2/3}
\frac{\lambda\pi^2}{\pi^2-a^2}.
\]
Using the volume formula, we obtain
\[
|\Omega^\omega_{\gamma,a}|^{2/3}
\frac{\lambda\pi^2}{\pi^2-a^2}
=
\lambda|\Omega^\omega_\gamma|^{2/3}
\left(1-\frac{a^2}{\pi^2}\right)^{-1/3}.
\]
The factor
\[
\left(1-\frac{a^2}{\pi^2}\right)^{-1/3}
\]
is uniquely minimized at \(a=0\), and it is strictly larger than \(1\) when
\(a\ne0\). Since equality is attained at \(a=0\) by Theorem~4.9, the
volume-normalized principal eigenvalue is uniquely minimized at \(a=0\).
\end{proof}

\begin{corollary}[Symmetric minimizer]\label{cor:min-at-1}
Let \(\omega=\Lambda/\lambda\ge4\). Among all volume-normalized domains
\[
\left\{
\frac{\Omega^\omega_{\gamma,a}}
{|\Omega^\omega_{\gamma,a}|^{1/3}}
:
\gamma\in\Gamma_\omega,\ |a|<\pi
\right\},
\]
the principal eigenvalue attains its unique minimum at the symmetric
unsheared configuration \((\gamma,a)=(1,0)\). Equivalently,
\[
\mu_1^+
\left(
\frac{\Omega^\omega_1}{|\Omega^\omega_1|^{1/3}}
\right)
=
\min_{\gamma\in\Gamma_\omega,\ |a|<\pi}
\mu_1^+
\left(
\frac{\Omega^\omega_{\gamma,a}}
{|\Omega^\omega_{\gamma,a}|^{1/3}}
\right).
\]
\end{corollary}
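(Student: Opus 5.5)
The plan has two steps. First reduce to the unsheared slice $a=0$ using the preceding corollary; then minimize a volume functional in $\gamma$.

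\textbf{Step 1 (remove the shear).} By Theorem~\ref{thm:shear LB} and the scaling law \eqref{eq:scaling}, for every admissible $(\gamma,a)$,
\[
|\Omega^\omega_{\gamma,a}|^{2/3}\mu_1^+(\Omega^\omega_{\gamma,a})
\ge \lambda\,|\Omega^\omega_\gamma|^{2/3}\Big(1-\frac{a^2}{\pi^2}\Big)^{-1/3}
\ge \lambda\,|\Omega^\omega_\gamma|^{2/3}.
\]
The first inequality is strict when $a\neq0$, and equality holds at $a=0$ by Theorem~\ref{thm:explicit-eigenpair}. The problem therefore reduces to showing that
\[
V(\gamma):=|\Omega^\omega_\gamma|
\]
is uniquely minimized over $\Gamma_\omega$ at $\gamma=1$. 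This uses that $\mu_1^+(\Omega^\omega_\gamma)=\lambda$ is independent of $\gamma$.

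\textbf{Step 2 (volume functional).} I would write $V(\gamma)=8\,|\Omega^{\omega,+}_\gamma|$ and sum the contributions of the seven patches. The cube contributes $\alpha^3$, which does not depend on $\gamma$. The caps contribute explicit double integrals, for instance
\[
\int_0^{\alpha}\!\!\int_0^{\alpha}\sqrt\omega\,\arcsin\!\Big(\frac{\gamma(\cos x+\cos y)}{\sqrt\omega}\Big)\,dx\,dy,
\]
and the bridges contribute integrals of $\arccos$ expressions. In the bridges I would substitute $\sigma=\sin s$, $\tau=\sin t$ to make the domains of integration simplices. The goal is a formula of the form $V(\gamma)=F(\gamma)+F(\gamma^{-1})$, or more generally $V(\gamma)=G(\gamma)$ with $G(\gamma)=G(\gamma^{-1})$ up to the explicit symmetric structure.

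Where does the involution $\gamma\mapsto\gamma^{-1}$ come from? Dividing $u_\gamma^\omega$ by $\gamma$ swaps the roles of the pair $(x,y)$ and of $z$ with weights $1$ and $\gamma^{-1}$. This suggests a symmetry that pairs the $Z$-cap with the $X,Y$-caps, and the $XY$-bridge with the $ZX,YZ$-bridges. With $V(\gamma)=H(\log\gamma)$, the claim $H$ even would give $V'(1)=0$. The remaining task is to show that $H$ is strictly increasing for $\log\gamma>0$.

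\textbf{Main obstacle.} I expect the hard step to be this strict monotonicity for $\gamma>1$ (equivalently, strict convexity of $H$). I would first differentiate under the integral signs. The integrands are smooth because the arguments of $\arcsin$ stay $\le1$, which is the role of $\omega\ge4$ and $\gamma\in\Gamma_\omega$. Boundary terms from the moving limits of the bridge regions vanish, since the integrand is zero where the height function vanishes.

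Next I would compare $\gamma\,\partial_\gamma V$ with its mirror under $\gamma\to\gamma^{-1}$. Each cap term $\sqrt\omega\arcsin(\gamma c/\sqrt\omega)$ is convex in $\gamma$ with derivative $c\,(1-\gamma^2c^2/\omega)^{-1/2}$. I would use convexity of $g(t)=\arcsin(e^{t}c)$ to show $H''>0$. The bridge terms, being integrals of $\arccos$ of linear combinations, would be handled the same way via the substitution above.

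If a clean convexity argument fails, a fallback is to pair points across $\gamma\leftrightarrow\gamma^{-1}$ and show directly that $V(\gamma)-V(1)>0$, using strict convexity of $\arcsin$ on $[0,1]$ and Jensen's inequality applied to $\gamma c+\gamma^{-1}c'$.
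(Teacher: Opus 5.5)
Your Step~1 is exactly the paper's reduction: the fixed-$\gamma$ shear corollary plus $\mu_1^+(\Omega^\omega_\gamma)=\lambda$ for every $\gamma$. Had you simply invoked the appendix result that $|\Omega^\omega_\gamma|$ is uniquely minimized at $\gamma=1$ (Corollary~\ref{cor:appendix-min}), your proof would coincide with the paper's.

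\textbf{The gap.} It lies in your own argument for that volume statement. The argument rests on $V(\gamma)=V(\gamma^{-1})$, i.e.\ evenness of $H$, and this is false for the three-dimensional family.

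\begin{itemize}
\item The weights of $u^\omega_\gamma$ are $(\gamma,\gamma,1)$. Dividing by $\gamma$ gives $(1,1,\gamma^{-1})$, whereas $u^\omega_{\gamma^{-1}}$ has weights $(\gamma^{-1},\gamma^{-1},1)$.
\item In the first case the distinguished coordinate $z$ carries the smaller weight; in the second it carries the larger one. So no coordinate permutation relates $\Omega^\omega_\gamma$ and $\Omega^\omega_{\gamma^{-1}}$, unlike the planar $1+1$ situation.
\item Correspondingly, one $Z$-cap cannot be paired with two $X,Y$-caps, nor one $XY$-bridge with two $ZX,YZ$-bridges.
\end{itemize}

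Concretely, use the appendix formulas for $Z_\gamma,X_\gamma,\mathcal{B}_\gamma,\mathcal{E}_\gamma$ together with $\int_T\arccos(\xi+\eta)\,d\xi\,d\eta=\pi/8$. For fixed $\gamma$ one finds
\[
\lim_{\omega\to\infty}|\Omega^{\omega,+}_\gamma|
=\frac{\pi^3}{8}+\pi\bigl(1+\gamma+\gamma^{-1}\bigr)+\frac{\pi}{8}\bigl(2\gamma+\gamma^{-2}\bigr).
\]
The bridge term is not invariant under $\gamma\mapsto\gamma^{-1}$: the limits at $\gamma=2$ and $\gamma=1/2$ differ by $3\pi/32$. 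Hence $V(2)\neq V(1/2)$ for all large $\omega$, where both values are admissible. So evenness supplies neither $V'(1)=0$ nor anything about the branch $\gamma<1$. Strict monotonicity for $\gamma>1$ alone does not locate the minimum. It is also not "equivalent" to strict convexity of $H$, as you state.

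\textbf{What is needed instead.}
\begin{itemize}
\item[(i)] A direct proof that $V'(1)=0$. After differentiating under the integral sign, symmetrizing in $x\leftrightarrow y$ makes $Z'_1$ cancel exactly against $2X'_1$. Symmetrizing in $\xi\leftrightarrow\eta$ gives $2\mathcal{B}'_1+\mathcal{E}'_1=0$.
\item[(ii)] Separate control of both branches. The paper does this by pointwise comparison of the differentiated integrands. For $\gamma\ge1$ it uses $c+\gamma^{-1}d\le c+d$, $\gamma^{-2}\le1$ and $P_\gamma\le A\le Q_\gamma$; for $\gamma<1$ all inequalities are reversed.
\end{itemize}

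Your convexity idea can also be salvaged, provided you run it on all of $\log\Gamma_\omega$ rather than on the positive half. In $t=\log\gamma$, the following integrands are each strictly convex (the last two because they are log-convex):
\[
\arcsin(e^t b)\ \text{with}\ b=(\cos x+\cos y)/\sqrt\omega,\qquad
\arcsin\bigl((\cos x+e^{-t}\cos y)/\sqrt\omega\bigr),
\]
\[
e^t\bigl(1-e^{2t}\eta^2/\omega\bigr)^{-1/2},\qquad
e^{-2t}\bigl(1-e^{-2t}\xi^2/\omega\bigr)^{-1/2}\bigl(1-e^{-2t}\eta^2/\omega\bigr)^{-1/2}.
\]
Hence $H$ is strictly convex, with no differentiation under the integral sign required. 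Together with $H'(0)=V'(1)=0$ from (i), this gives the unique minimum at $\gamma=1$.
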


\begin{proof}
By Corollary~5.4, for each fixed \(\gamma\in\Gamma_\omega\), the
volume-normalized eigenvalue is uniquely minimized among shears at \(a=0\).
Thus it remains to minimize over the unsheared family
\[
\left\{
\frac{\Omega^\omega_\gamma}{|\Omega^\omega_\gamma|^{1/3}}
:
\gamma\in\Gamma_\omega
\right\}.
\]
For every \(\gamma\in\Gamma_\omega\), Theorem~4.9 gives
\[
\mu_1^+(\Omega^\omega_\gamma)=\lambda.
\]
Therefore the corresponding scale-invariant quantity is
\[
|\Omega^\omega_\gamma|^{2/3}
\mu_1^+(\Omega^\omega_\gamma)
=
\lambda|\Omega^\omega_\gamma|^{2/3}.
\]
By Corollary~A.3, the volume
\[
|\Omega^\omega_\gamma|
\]
attains its unique minimum at \(\gamma=1\) on \(\Gamma_\omega\). Hence
\[
|\Omega^\omega_\gamma|^{2/3}
\mu_1^+(\Omega^\omega_\gamma)
\]
is uniquely minimized at \(\gamma=1\). Combining this with the shear
optimality from Corollary~5.4, the full two-parameter minimum is uniquely
attained at
\[
(\gamma,a)=(1,0).
\]
\end{proof}

\section*{Concluding remarks}

We constructed an explicit three--dimensional family of double--pyramidal
domains
\[
\{\Omega^\omega_\gamma:\gamma\in\Gamma_\omega\},
\qquad
\omega=\frac{\Lambda}{\lambda}\ge 4,
\]
together with an explicitly constructed piecewise eigenfunction
\(u^\omega_\gamma\) satisfying
\[
-\mathcal{M}^+_{\lambda,\Lambda}(D^2 u^\omega_\gamma)
=
\lambda\,u^\omega_\gamma
\quad\text{in } \Omega^\omega_\gamma,
\qquad
u^\omega_\gamma=0
\quad\text{on } \partial\Omega^\omega_\gamma.
\]
This identifies the principal eigenpair on each unsheared shape
(Theorem~\ref{thm:explicit-eigenpair}).

We then quantified the spectral effect of the one--parameter shear \(C_a\).
For the sheared domains \(\Omega^\omega_{\gamma,a}=C_a(\Omega^\omega_\gamma)\),
we proved the lower bound
\[
\mu_1^+(\Omega^\omega_{\gamma,a})
\ge
\frac{\lambda\pi^2}{\pi^2-a^2},
\]
with equality in the admissible family precisely when \(a=0\)
(Theorem~\ref{thm:shear LB}).

The final minimization combines two structural mechanisms developed here:
\begin{enumerate}
\item exact \(C^1\) gluing across the seven patches;
\item the monotonicity of the volume functional
\[
V(\gamma)=|\Omega^\omega_\gamma|,
\]
obtained from the pairing of the cap and bridge contributions under the
duality \(\gamma\leftrightarrow\gamma^{-1}\).
\end{enumerate}
This volume monotonicity forces \(V(\gamma)\) to attain its unique minimum
at the symmetric value \(\gamma=1\). After normalization by scaling, this
yields our main outcome:
\emph{within the class of \(\gamma\)-deformations and affine shears considered
here, the volume-normalized principal eigenvalue is minimized uniquely by the
most symmetric, unsheared configuration} (Corollary~\ref{cor:min-at-1}).

\medskip

A conceptual point deserves emphasis. In the planar construction of
Birindelli--Leoni, the symmetric configuration is closely related to a
rotated square, and this permits a partial return to a separable Laplacian
geometry. In contrast, the present three-dimensional admissible domains
retain a genuinely seven-patch double-pyramidal structure. They are not
reduced, by an orthogonal change of variables, to a cubical separable
configuration compatible with the Pucci operator. The seven-patch
construction compensates for this loss of separability.

More fundamentally, the method relies on maintaining a \emph{fixed inertia
pattern} of the Hessian on each patch. The domain is engineered so that all
trigonometric profiles remain within intervals avoiding sign changes,
ensuring that the number of positive and negative second derivatives is
constant throughout each region. This allows the Pucci operator to be
evaluated patchwise without local switching of coefficients.

\paragraph{Outlook.}
The restriction \(\omega\ge4\) is intrinsic to the present seven-patch
construction. It ensures that the admissible interval
\[
\Gamma_\omega
=
\left[
\frac{2}{\sqrt\omega},
\frac{\sqrt\omega}{2}
\right]
\]
is nonempty and that the face caps attach to the full corresponding faces of
the central cube. The intermediate range
\[
1<\omega<4
\]
is not covered by the present argument and remains open. Treating this range
would require either a different admissible geometry or a modification of the
patch construction near the cap interfaces. The formal Laplacian case
\(\omega=1\) is degenerate from the viewpoint of the anisotropy parameter,
since it leaves only the symmetric value \(\gamma=1\), and should be regarded
separately.

The preceding mechanism also suggests both opportunities and limitations.
In higher dimensions, bridge-type regions may extend beyond half-period
windows of the trigonometric profiles, leading to local sign changes of
second derivatives and hence to variation in the Hessian inertia within a
single patch. Such inertia stratification would obstruct a direct extension
of the present argument. Whether an analogous phase-controlled patch
architecture exists in dimension \(n\ge4\) remains an open question.

Other natural directions include:
\begin{itemize}
\item extending the construction to \(\mathcal{M}^-_{\lambda,\Lambda}\), to Isaacs operators,
or to anisotropic Pucci classes;
\item classifying affine, or selected nonlinear, deformations that admit sharp
spectral lower bounds and determining all equality cases;
\item establishing quantitative stability estimates measuring the growth of
\(\mu_1^+(\Omega)\) with geometric distance from \(\Omega^\omega_1\);
\item investigating numerical optimization beyond the present domain class;
\item exploring connections with Pólya--Szegő-type symmetry principles
in fully nonlinear settings.
\end{itemize}

We hope that the explicit patch architecture and the
\(\gamma\leftrightarrow\gamma^{-1}\) pairing principle provide useful tools
for these problems.

\appendix\label{app:volume-derivative}

\section{Non-separability and volume monotonicity}\label{sec:pucci-check}
\subsection{Non-separability of Eigenfunctions on Cubes}
\label{appendix:nonsep}

In this appendix we provide the proof of Theorem~\ref{thm:nonsep},
which states that when \(\Lambda>\lambda\), the eigenfunctions of the Pucci
operator \(\mathcal{M}^+_{\lambda,\Lambda}\) on Cartesian domains such as
cubes are not separable in coordinate directions.

\begin{proof}
By the scaling law for the principal half-eigenvalue, it is enough to consider
\[
Q=\left(-\frac{\pi\sqrt n}{2},\frac{\pi\sqrt n}{2}\right)^n.
\]
Let \(u(x_1,\dots,x_n)\) be a positive eigenfunction of
\(\mathcal{M}^+_{\lambda,\Lambda}\) in \(Q\) associated with the principal
eigenvalue \(\mu>0\), that is,
\begin{equation}\label{eq:eigens}
\begin{cases}
-\mathcal{M}^+_{\lambda,\Lambda}(D^2u)=\mu u & \text{in } Q,\\
u>0 & \text{in } Q,\\
u=0 & \text{on } \partial Q.
\end{cases}
\end{equation}
Assume for contradiction that \(u\) is separable:
\[
u(x_1,\dots,x_n)=\prod_{j=1}^n f(x_j),
\]
where, by symmetry and normalization, \(f\) is smooth, even, positive, and
satisfies \(f(0)=1\). In particular, \(f'(0)=0\).

Evaluating at the origin, we have
\[
D^2u(0)=f''(0)I_n.
\]
Since \(u\) has a maximum at the origin, \(f''(0)<0\). Hence
\[
-\mathcal{M}^+_{\lambda,\Lambda}(D^2u(0))
=
-\lambda n f''(0)
=
\mu u(0)
=
\mu.
\]
Therefore
\[
f''(0)=-\frac{\mu}{n\lambda}<0.
\]

We first show that \(f''<0\) throughout the interval. Suppose, to the
contrary, that \(f''(x_0)=0\) for some
\[
x_0\in\left(-\frac{\pi\sqrt n}{2},\frac{\pi\sqrt n}{2}\right).
\]
Evaluating \eqref{eq:eigens} at the point \((x_0,0,\dots,0)\), the Hessian is
diagonal because \(f'(0)=0\). Its diagonal entries are
\[
0,\quad f(x_0)f''(0),\dots,f(x_0)f''(0).
\]
Thus
\[
\mu f(x_0)
=
-\lambda(n-1)f(x_0)f''(0).
\]
Since \(f(x_0)>0\), this gives
\[
\mu=-(n-1)\lambda f''(0),
\]
which contradicts
\[
\mu=-n\lambda f''(0).
\]
Hence \(f''<0\) everywhere.

Now evaluate \eqref{eq:eigens} at \((x,0,\dots,0)\). Again the Hessian is
diagonal, with entries
\[
f''(x),\quad f(x)f''(0),\dots,f(x)f''(0),
\]
and all these entries are negative. Therefore
\[
-\lambda\left(f''(x)+(n-1)f(x)f''(0)\right)=\mu f(x).
\]
Using
\[
f''(0)=-\frac{\mu}{n\lambda},
\]
we obtain
\[
f''(x)=-\frac{\mu}{n\lambda}f(x).
\]
Together with
\[
f\left(-\frac{\pi\sqrt n}{2}\right)
=
f\left(\frac{\pi\sqrt n}{2}\right)=0,
\qquad
f(0)=1,
\]
this gives
\[
f(x)=\cos\left(\frac{x}{\sqrt n}\right),
\qquad
\mu=\lambda.
\]
Thus
\[
u(x_1,\dots,x_n)
=
\prod_{i=1}^n
\cos\left(\frac{x_i}{\sqrt n}\right).
\]

Now consider the Hessian along the diagonal \((x,\dots,x)\). Put
\[
c=\cos\left(\frac{x}{\sqrt n}\right),
\qquad
s=\sin\left(\frac{x}{\sqrt n}\right).
\]
Then
\[
D^2u(x,\dots,x)
=
\frac{1}{n}
\begin{pmatrix}
-c^n & c^{n-2}s^2 & \cdots & c^{n-2}s^2\\
c^{n-2}s^2 & -c^n & \cdots & c^{n-2}s^2\\
\vdots & \vdots & \ddots & \vdots\\
c^{n-2}s^2 & c^{n-2}s^2 & \cdots & -c^n
\end{pmatrix}.
\]
This matrix has one eigenvalue
\[
e_1=
\frac{c^{\,n-2}}{n}\big((n-1)-nc^2\big)
\]
and \(n-1\) eigenvalues
\[
e_2=\cdots=e_n
=
-\frac{c^{\,n-2}}{n}.
\]
For \(x\) close to \(\pi\sqrt n/2\), we have \(c>0\) small, and hence
\[
e_1>0,
\qquad
e_2,\dots,e_n<0.
\]
Therefore
\[
-\mathcal{M}^+_{\lambda,\Lambda}(D^2u(x,\dots,x))
=
\Lambda c^n
-
\frac{n-1}{n}(\Lambda-\lambda)c^{n-2}.
\]
This differs from
\[
\lambda c^n
=
\mu u(x,\dots,x)
\]
unless \(\Lambda=\lambda\). This contradicts our assumption
\(\Lambda>\lambda\).

Hence the eigenfunction cannot be separable.
\end{proof}

\medskip
\noindent
This structural obstruction is the key motivation for adopting rhombus and double-pyramid geometries, 
where eigenfunctions admit piecewise separability after suitable affine transformations.

\subsection{Volume monotonicity}

We now prove that the volume of the unsheared domain is minimized at the
symmetric value \(\gamma=1\). It is enough to work in the first octant, since
the full domain is obtained by reflection across the coordinate planes.

Throughout this subsection we assume
\[
\omega\ge4,
\qquad
\gamma\in\Gamma_\omega
=
\left[
\frac{2}{\sqrt\omega},
\frac{\sqrt\omega}{2}
\right],
\qquad
\alpha=\frac{\pi}{2}.
\]
Let
\[
V^+(\gamma):=|\Omega^{\omega,+}_\gamma|,
\qquad
V(\gamma):=|\Omega^\omega_\gamma|.
\]
Then
\[
V(\gamma)=8V^+(\gamma).
\]
The central cube has volume \(\alpha^3\), independent of \(\gamma\). We write
the remaining first-octant volume as
\[
V^+(\gamma)
=
\alpha^3
+
Z_\gamma+2X_\gamma+2\mathcal{B}_\gamma+\mathcal{E}_\gamma.
\]
Here \(Z_\gamma\) denotes the \(Z\)-cap contribution,
\[
Z_\gamma
=
\sqrt\omega
\int_0^\alpha\int_0^\alpha
\arcsin\left(
\frac{\gamma(\cos x+\cos y)}{\sqrt\omega}
\right)\,dx\,dy,
\]
and, after renaming variables, the two \(X\)- and \(Y\)-caps have the same
volume
\[
X_\gamma
=
\sqrt\omega
\int_0^\alpha\int_0^\alpha
\arcsin\left(
\frac{\cos x+\gamma^{-1}\cos y}{\sqrt\omega}
\right)\,dx\,dy.
\]

For the \(ZX\)- and \(YZ\)-bridges, set
\[
T:=\{(\xi,\eta)\in[0,1]^2:\xi+\eta\le1\}.
\]
Using the changes of variables
\[
\xi=\sqrt\omega
\sin\left(\frac{x-\alpha}{\sqrt\omega}\right),
\qquad
\eta=\frac{\sqrt\omega}{\gamma}
\sin\left(\frac{z-\alpha}{\sqrt\omega}\right),
\]
we obtain the common bridge volume
\[
\mathcal{B}_\gamma
=
\gamma
\int_T
\frac{\arccos(\xi+\eta)}
{\sqrt{1-\xi^2/\omega}\sqrt{1-\gamma^2\eta^2/\omega}}
\,d\xi\,d\eta.
\]
Similarly, for the \(XY\)-bridge, using
\[
\xi=\gamma\sqrt\omega
\sin\left(\frac{x-\alpha}{\sqrt\omega}\right),
\qquad
\eta=\gamma\sqrt\omega
\sin\left(\frac{y-\alpha}{\sqrt\omega}\right),
\]
we get
\[
\mathcal{E}_\gamma
=
\gamma^{-2}
\int_T
\frac{\arccos(\xi+\eta)}
{\sqrt{1-\xi^2/(\gamma^2\omega)}
 \sqrt{1-\eta^2/(\gamma^2\omega)}}
\,d\xi\,d\eta.
\]

\begin{lemma}[Differentiation under the integral sign]\label{lem:diff-under-int}
The functions
\[
Z_\gamma,\qquad X_\gamma,\qquad \mathcal{B}_\gamma,\qquad
\mathcal{E}_\gamma
\]
are differentiable for \(\gamma\in\operatorname{int}\Gamma_\omega\), and
their derivatives are obtained by differentiating under the integral sign.
The same conclusion holds for the corresponding one-sided derivatives at the
endpoints of \(\Gamma_\omega\).
\end{lemma}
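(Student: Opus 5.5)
The plan is to apply the standard Leibniz rule (dominated convergence for difference quotients) separately to each of the four integrals. For each one we need three things: a fixed $\gamma$-independent domain of integration, joint continuity of the integrand in $(\text{variables},\gamma)$, and a $\gamma$-derivative bounded by an integrable function uniformly for $\gamma$ in a compact subinterval of $\Gamma_\omega$ (for the endpoints, a one-sided neighbourhood inside $\Gamma_\omega$). The domains are already fixed: $[0,\alpha]^2$ for $Z_\gamma,X_\gamma$ and the triangle $T$ for $\mathcal B_\gamma,\mathcal E_\gamma$. The $\gamma$-dependence therefore sits only in the integrands, and the whole task is to produce dominating functions.

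For the bridge terms I expect no difficulty. In $\mathcal B_\gamma$ the factor $\arccos(\xi+\eta)$ lies in $[0,\pi/2]$ and does not depend on $\gamma$. The denominators $\sqrt{1-\xi^2/\omega}$ and $\sqrt{1-\gamma^2\eta^2/\omega}$ are bounded below uniformly, because $\xi,\eta\le1$ and $\gamma^2/\omega\le 1/4$ on $\Gamma_\omega$. Hence the integrand and its $\gamma$-derivative,
\[
\gamma\,\partial_\gamma\bigl(1-\gamma^2\eta^2/\omega\bigr)^{-1/2}=\gamma^2\eta^2\omega^{-1}\bigl(1-\gamma^2\eta^2/\omega\bigr)^{-3/2},
\]
together with the product-rule term coming from the prefactor $\gamma$, are continuous and bounded on $T\times\Gamma_\omega$. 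The same reasoning applies to $\mathcal E_\gamma$, since $1/(\gamma^2\omega)\le 1/4$ on $\Gamma_\omega$ and the prefactor $\gamma^{-2}$ is smooth there. A bounded continuous integrand with bounded continuous $\gamma$-derivative on a compact set lets us differentiate under the integral sign, including one-sided at the endpoints.

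The main obstacle is the cap terms. There $\partial_\gamma \arcsin(g)=\partial_\gamma g/\sqrt{1-g^2}$ blows up wherever the argument $g$ reaches $1$. For $Z_\gamma$ we have $g=\gamma(\cos x+\cos y)/\sqrt\omega\le 2\gamma/\sqrt\omega\le1$, with equality only at $(x,y)=(0,0)$ and only when $\gamma=\sqrt\omega/2$, which is the right endpoint. Similarly, for $X_\gamma$ equality occurs only at the corner and only at $\gamma=2/\sqrt\omega$. In the interior of $\Gamma_\omega$ we have $g\le 2\gamma/\sqrt\omega<1$ uniformly on compact subintervals, so the derivative is bounded and the interior case follows as above.

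At the endpoint, say $\gamma_0=\sqrt\omega/2$ for $Z_\gamma$, I would bound the singularity explicitly. Near the origin, $1-g\ge c\,(x^2+y^2)$ uniformly for $\gamma\le\gamma_0$, because $1-\cos x\sim x^2/2$ and $\gamma/\gamma_0\le1$. This gives $|\partial_\gamma \arcsin g|\le C/(x^2+y^2)^{1/2}$, which is integrable on $[0,\alpha]^2$ (in polar coordinates, $\int r^{-1}\,r\,dr<\infty$). This dominating function is independent of $\gamma\in[\gamma_0-\delta,\gamma_0]$, so the mean value theorem plus dominated convergence gives the one-sided derivative at $\gamma_0$ and shows it equals the integral of the derivative. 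The case of $X_\gamma$ at $\gamma=2/\sqrt\omega$ is the same after noting $\partial_\gamma g=-\gamma^{-2}\cos y/\sqrt\omega$ is bounded. This endpoint integrability estimate is the only nontrivial point, and it is exactly where dimension two of the integration domain matters.
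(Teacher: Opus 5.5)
Your proposal is correct and follows the same route as the paper. Both use dominated convergence with $\gamma$-independent integrable majorants: the kernels of $\mathcal B_\gamma$ and $\mathcal E_\gamma$ are bounded, and the cap kernels have only an integrable singularity at an endpoint. Your version is more explicit than the paper's sketch, since you locate the singularity at the corner for $Z_\gamma$ at $\gamma=\sqrt\omega/2$ and exhibit the majorant $C/(x^2+y^2)^{1/2}$.

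One harmless slip: for $\omega>4$ one has $(1+\gamma^{-1})/\sqrt\omega<1$ on all of $\Gamma_\omega$, so $X_\gamma$ has no singularity at $\gamma=2/\sqrt\omega$. Equality there requires $\omega=4$, where $\Gamma_\omega=\{1\}$, so this only makes that case easier.
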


\begin{proof}
For \(\gamma\in\Gamma_\omega\), the admissibility conditions ensure that all
arcsine and arccosine expressions appearing above are well defined. On compact
subintervals of \(\operatorname{int}\Gamma_\omega\), the denominators in the
differentiated kernels are bounded away from zero, except possibly at boundary
points of the integration regions. The possible endpoint singularities are of
square-root type and are integrable. Hence the differentiated kernels are
locally dominated by integrable functions independent of \(\gamma\). The
claim follows from dominated convergence. The same argument gives the
one-sided derivatives at the endpoints.
\end{proof}

\begin{lemma}[Volume monotonicity]\label{lem:antisym}
Let
\[
V(\gamma)=|\Omega^\omega_\gamma|.
\]
If \(\omega>4\), then
\begin{equation}\label{eq:Vprime-antisym}
V'(\gamma)<0 \quad\text{for } \gamma<1,
\qquad
V'(1)=0,
\qquad
V'(\gamma)>0 \quad\text{for } \gamma>1.
\end{equation}
For \(\omega=4\), the admissible interval reduces to the single point
\(\Gamma_\omega=\{1\}\).
\end{lemma}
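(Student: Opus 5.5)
The plan is to work with the first-octant decomposition
\[
V^+(\gamma)=\alpha^3+Z_\gamma+2X_\gamma+2\mathcal B_\gamma+\mathcal E_\gamma,
\qquad V=8V^+.
\]
By Lemma~\ref{lem:diff-under-int}, we may differentiate each term under the integral sign.

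\textbf{The degenerate case $\omega=4$.} Here $\Gamma_\omega=[2/\sqrt\omega,\sqrt\omega/2]=\{1\}$ trivially, so nothing needs to be proved. From now on we take $\omega>4$.

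\textbf{Cap terms.} Put $c:=\cos x$, $d:=\cos y$. Then
\[
Z'_\gamma=\int_0^\alpha\!\!\int_0^\alpha\frac{\sqrt\omega\,(c+d)}{\sqrt{\omega-\gamma^2(c+d)^2}}\,dx\,dy>0,
\qquad
X'_\gamma=-\gamma^{-2}\int_0^\alpha\!\!\int_0^\alpha\frac{\sqrt\omega\,d}{\sqrt{\omega-(c+\gamma^{-1}d)^2}}\,dx\,dy<0 .
\]
At $\gamma=1$ the two denominators coincide. Using the symmetry $x\leftrightarrow y$ in $Z'_1$, we get $Z'_1=2\int\!\!\int \sqrt\omega\, d/\sqrt{\omega-(c+d)^2}$, and hence $Z'_1+2X'_1=0$.

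\textbf{Bridge terms.} I will do the analogous computation for $\mathcal B_\gamma$ and $\mathcal E_\gamma$ on the simplex $T$. It is cleanest to undo the normalisation, writing $\mathcal B_\gamma$ in the variables $(\xi,\zeta)$ with $\zeta=\gamma\eta$, so that the $\gamma$-dependence sits only in the upper limit of the region. Differentiating that limit, together with the prefactors $\gamma$ and $\gamma^{-2}$ and the kernels, should give $2\mathcal B_1'+\mathcal E_1'=0$. The same $\xi\leftrightarrow\eta$ symmetry that makes the caps cancel should make the bridge contributions cancel. This yields $V'(1)=0$.

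\textbf{Sign away from $\gamma=1$.} For the strict signs I plan to show $V''>0$ on $\Gamma_\omega$ (strict convexity), which together with $V'(1)=0$ gives the claimed sign pattern. For the caps this is direct:
\begin{itemize}
\item $Z''_\gamma>0$, because $t\mapsto\arcsin(t)$ is convex on $[0,1]$;
\item $\gamma\mapsto\arcsin\bigl((c+\gamma^{-1}d)/\sqrt\omega\bigr)$ is a convex function composed with the convex decreasing function $\gamma\mapsto\gamma^{-1}$, so it is convex in $\gamma$ and $X''_\gamma>0$.
\end{itemize}
If convexity of the bridge terms fails, the fallback is to use the involution: compare $V'(\gamma)$ with $V'(\gamma^{-1})$ directly. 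I would substitute $y$-type variables so that the $\gamma$-cap integrand at $\gamma$ becomes the $\gamma^{-1}$-cap integrand scaled by $\gamma$. This should give an identity of the form $\gamma^2V'(\gamma)=-V'(\gamma^{-1})+R(\gamma)$ with $R$ of fixed sign, reducing the claim to $\gamma>1$, where $Z'$ dominates.

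\textbf{Expected obstacle.} The main difficulty is the bridges. The factor $\gamma^{-2}$ in $\mathcal E_\gamma$ decreases while the kernel increases, so convexity of $\mathcal E_\gamma$ and $\mathcal B_\gamma$ is not automatic. I expect to need a careful estimate, using $\omega>4$ and $\xi^2\le1$, to show that the kernel terms are dominated by the prefactor terms.
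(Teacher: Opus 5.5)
Your treatment of the caps is fine. The cancellation $Z'_1+2X'_1=0$ is correct, and $Z_\gamma$, $X_\gamma$ are convex in $\gamma$. For $X_\gamma$, the operative fact is that $t\mapsto\arcsin\bigl((c+td)/\sqrt\omega\bigr)$ is convex and nondecreasing because $d=\cos y\ge 0$, and it is composed with the convex map $\gamma\mapsto\gamma^{-1}$.

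The genuine gap is the bridge contribution $2\mathcal{B}_\gamma+\mathcal{E}_\gamma$, which you never actually handle:
\begin{itemize}
\item The identity $2\mathcal{B}'_1+\mathcal{E}'_1=0$ is only asserted (``should give'').
\item The sign for $\gamma\neq1$ rests on a bridge convexity you do not prove.
\item Your fallback identity $\gamma^2V'(\gamma)=-V'(\gamma^{-1})+R(\gamma)$ has no derivation. In general $V(\gamma)\neq V(\gamma^{-1})$: for fixed $\gamma$ and $\omega\to\infty$ the caps tend to the symmetric quantity $2\alpha(1+\gamma+\gamma^{-1})$, but the bridges tend to $(2\gamma+\gamma^{-2})\int_T\arccos(\xi+\eta)\,d\xi\,d\eta$. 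So no such identity comes for free.
\end{itemize}
Since the bridges are part of $V$, neither $V'(1)=0$ nor the strict signs are established as written. Also, the substitution $\zeta=\gamma\eta$ does not move all the $\gamma$-dependence into the region, since the integrand becomes $\arccos(\xi+\zeta/\gamma)$.

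Your diagnosis of the obstacle is off, and the missing step is easy. The kernel $1/(Q_\gamma(\xi)Q_\gamma(\eta))$ is \emph{decreasing} in $\gamma$, not increasing. Both bridge integrands are pointwise convex in $\gamma$ with no estimate needed:
\begin{itemize}
\item $\frac{d}{d\gamma}\bigl(\gamma/P_\gamma(\eta)\bigr)=(1-\gamma^2\eta^2/\omega)^{-3/2}$, which is nondecreasing;
\item $\gamma^{-2}/(Q_\gamma(\xi)Q_\gamma(\eta))=(\gamma^2-\xi^2/\omega)^{-1/2}(\gamma^2-\eta^2/\omega)^{-1/2}$, a product of positive, decreasing, convex functions.
\end{itemize}
Integrate this pointwise convexity rather than computing $V''$ under the integral sign. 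Lemma~\ref{lem:diff-under-int} does not cover second derivatives, and the second-derivative kernel of $Z_\gamma$ is not integrable at $\gamma=\sqrt\omega/2$. This gives strict convexity of $V$ on $\Gamma_\omega$, and together with $V'(1)=0$ the sign pattern follows. For the bridge part of $V'(1)=0$: at $\gamma=1$ one has $P_1=A=Q_1$, and after symmetrizing $2\mathcal{B}'_1$ in $\xi\leftrightarrow\eta$ its integrand coincides with that of $-\mathcal{E}'_1$.

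The paper avoids convexity altogether. It symmetrizes $2X'_\gamma$ and $2\mathcal{B}'_\gamma$ and compares them termwise with $Z'_\gamma$ and $-\mathcal{E}'_\gamma$. For $\gamma\ge1$ it uses $c+\gamma^{-1}d\le c+d$, $P_\gamma\le A\le Q_\gamma$, $\gamma^{-3}\le1$ and $\gamma^{-5}\le\gamma^2$, with all inequalities reversed for $\gamma<1$. This yields all three claims in one pass. Your convexity route, once completed as above, is equally valid and somewhat more transparent.
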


\begin{proof}
Since \(V(\gamma)=8V^+(\gamma)\), it is enough to prove the sign property for
\(V^+\). The case \(\omega=4\) is trivial because
\(\Gamma_\omega=\{1\}\). Hence we assume \(\omega>4\).

First consider the face-cap contribution
\[
C_\gamma:=Z_\gamma+2X_\gamma.
\]
By Lemma~\ref{lem:diff-under-int},
\[
Z'_\gamma
=
\int_0^\alpha\int_0^\alpha
\frac{\cos x+\cos y}
{\sqrt{
1-\dfrac{\gamma^2(\cos x+\cos y)^2}{\omega}
}}
\,dx\,dy.
\]
Also,
\[
X'_\gamma
=
-\gamma^{-2}
\int_0^\alpha\int_0^\alpha
\frac{\cos y}
{\sqrt{
1-\dfrac{(\cos x+\gamma^{-1}\cos y)^2}{\omega}
}}
\,dx\,dy.
\]
Using the symmetry of the square \([0,\alpha]^2\), we may write
\[
2X'_\gamma
=
-\gamma^{-2}
\int_0^\alpha\int_0^\alpha
\left[
\frac{\cos y}
{\sqrt{
1-\dfrac{(\cos x+\gamma^{-1}\cos y)^2}{\omega}
}}
+
\frac{\cos x}
{\sqrt{
1-\dfrac{(\cos y+\gamma^{-1}\cos x)^2}{\omega}
}}
\right]dx\,dy.
\]

Set
\[
c=\cos x,\qquad d=\cos y,\qquad S=c+d.
\]
If \(\gamma\ge1\), then
\[
c+\gamma^{-1}d\le S,
\qquad
d+\gamma^{-1}c\le S.
\]
Therefore
\[
\frac{d}
{\sqrt{1-(c+\gamma^{-1}d)^2/\omega}}
+
\frac{c}
{\sqrt{1-(d+\gamma^{-1}c)^2/\omega}}
\le
\frac{S}{\sqrt{1-S^2/\omega}}.
\]
Since \(\gamma^{-2}\le1\) and
\[
\frac{S}{\sqrt{1-\gamma^2S^2/\omega}}
\ge
\frac{S}{\sqrt{1-S^2/\omega}},
\]
we obtain
\[
C'_\gamma\ge0
\qquad\text{for }\gamma\ge1.
\]
The inequality is strict for \(\gamma>1\), except on a set of measure zero.
Hence
\[
C'_\gamma>0
\qquad\text{for }\gamma>1.
\]
Similarly, if \(0<\gamma<1\), then the above inequalities reverse and we get
\[
C'_\gamma<0.
\]
In particular,
\[
C'_1=0.
\]

We now treat the bridge contribution
\[
D_\gamma:=2\mathcal{B}_\gamma+\mathcal{E}_\gamma.
\]
Let
\[
h(\xi,\eta):=\arccos(\xi+\eta),
\]
and define
\[
A(t):=\sqrt{1-\frac{t^2}{\omega}},
\qquad
P_\gamma(t):=\sqrt{1-\frac{\gamma^2t^2}{\omega}},
\qquad
Q_\gamma(t):=\sqrt{1-\frac{t^2}{\gamma^2\omega}}.
\]
Then
\[
\mathcal{B}_\gamma
=
\gamma
\int_T
\frac{h(\xi,\eta)}
{A(\xi)P_\gamma(\eta)}
\,d\xi\,d\eta.
\]
Differentiating gives
\[
\mathcal{B}'_\gamma
=
\int_T
h(\xi,\eta)
\left[
\frac{1}{A(\xi)P_\gamma(\eta)}
+
\frac{\gamma^2\eta^2}
{\omega A(\xi)P_\gamma(\eta)^3}
\right]
\,d\xi\,d\eta.
\]
Also,
\[
\mathcal{E}_\gamma
=
\gamma^{-2}
\int_T
\frac{h(\xi,\eta)}
{Q_\gamma(\xi)Q_\gamma(\eta)}
\,d\xi\,d\eta,
\]
and hence
\[
\mathcal{E}'_\gamma
=
-\int_T
h(\xi,\eta)
\left[
\frac{2}{\gamma^3Q_\gamma(\xi)Q_\gamma(\eta)}
+
\frac{1}{\gamma^5\omega}
\left(
\frac{\xi^2}{Q_\gamma(\xi)^3Q_\gamma(\eta)}
+
\frac{\eta^2}{Q_\gamma(\xi)Q_\gamma(\eta)^3}
\right)
\right]
\,d\xi\,d\eta.
\]

Since \(T\) and \(h(\xi,\eta)\) are symmetric in \(\xi\) and \(\eta\), we may
write \(2\mathcal{B}'_\gamma\) as the integral of the sum of the above
integrand and its version with \(\xi\) and \(\eta\) interchanged.

Suppose first that \(\gamma\ge1\). Then, for \(0\le t\le1\),
\[
P_\gamma(t)\le A(t)\le Q_\gamma(t),
\qquad
\gamma^{-3}\le1,
\qquad
\gamma^{-5}\le\gamma^2.
\]
Consequently,
\[
\frac{1}{A(\xi)P_\gamma(\eta)}
\ge
\frac{1}{Q_\gamma(\xi)Q_\gamma(\eta)}
\ge
\frac{1}{\gamma^3Q_\gamma(\xi)Q_\gamma(\eta)},
\]
and
\[
\frac{\gamma^2\eta^2}
{\omega A(\xi)P_\gamma(\eta)^3}
\ge
\frac{\eta^2}
{\gamma^5\omega Q_\gamma(\xi)Q_\gamma(\eta)^3}.
\]
Applying the same estimates after interchanging \(\xi\) and \(\eta\), we get
\[
D'_\gamma=2\mathcal{B}'_\gamma+\mathcal{E}'_\gamma\ge0
\qquad\text{for }\gamma\ge1.
\]
The inequality is strict for \(\gamma>1\), except on a set of measure zero.
Thus
\[
D'_\gamma>0
\qquad\text{for }\gamma>1,
\qquad
D'_1=0.
\]

If \(0<\gamma<1\), then
\[
Q_\gamma(t)\le A(t)\le P_\gamma(t),
\qquad
\gamma^{-3}>1,
\qquad
\gamma^{-5}>\gamma^2.
\]
The preceding pointwise inequalities reverse, and we obtain
\[
D'_\gamma<0
\qquad\text{for }0<\gamma<1.
\]

Combining the cap and bridge contributions,
\[
(V^+)'(\gamma)=C'_\gamma+D'_\gamma.
\]
Therefore
\[
(V^+)'(\gamma)<0 \quad\text{for } \gamma<1,
\qquad
(V^+)'(1)=0,
\qquad
(V^+)'(\gamma)>0 \quad\text{for } \gamma>1.
\]
Since \(V(\gamma)=8V^+(\gamma)\), the same sign property holds for \(V\).
\end{proof}

\begin{corollary}[Appendix conclusion: volume minimized at \(\gamma=1\)]
\label{cor:appendix-min}
The function
\[
V(\gamma)=|\Omega^\omega_\gamma|
\]
attains its unique minimum at \(\gamma=1\) on \(\Gamma_\omega\).
\end{corollary}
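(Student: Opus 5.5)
The statement follows directly from Lemma~\ref{lem:antisym}. The plan is to integrate the sign information on $V'$ over the interval $\Gamma_\omega$ and to handle the degenerate case $\omega=4$ separately.

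\emph{Degenerate case $\omega=4$.} Here $\Gamma_\omega=\{1\}$, so there is nothing to prove: the minimum is attained at $\gamma=1$, and it is trivially unique.

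\emph{Main case $\omega>4$.} Now $\Gamma_\omega=[2/\sqrt\omega,\sqrt\omega/2]$ is a nondegenerate compact interval containing $1$ in its interior. The steps are as follows.
\begin{enumerate}
\item By Lemma~\ref{lem:diff-under-int}, $V=8V^+$ is differentiable on $\operatorname{int}\Gamma_\omega$, and it has one-sided derivatives at the endpoints. In particular, $V$ is continuous on $\Gamma_\omega$.
\item Take $\gamma\in\Gamma_\omega$ with $\gamma<1$. Lemma~\ref{lem:antisym} gives $V'>0$ on $(\gamma,1)$. The mean value theorem, applied on $[\gamma,1]$, then yields
\[
V(\gamma)-V(1)=V'(\xi)(\gamma-1)
\]
for some $\xi\in(\gamma,1)$. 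Since $V'(\xi)<0$ and $\gamma-1<0$, this product is strictly positive, so $V(\gamma)>V(1)$.
\item Take $\gamma>1$. Applying the mean value theorem on $[1,\gamma]$ in the same way, with $V'>0$ on $(1,\gamma)$, gives $V(\gamma)>V(1)$.
\end{enumerate}
Together these show that $V(\gamma)>V(1)$ for every $\gamma\ne1$ in $\Gamma_\omega$. Hence $\gamma=1$ is the unique minimizer.

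\emph{Possible obstacle.} The only delicate point is continuity of $V$ up to the endpoints of $\Gamma_\omega$, which the mean value theorem requires. If one does not want to rely on the one-sided statement in Lemma~\ref{lem:diff-under-int}, continuity can be obtained directly from dominated convergence. The integrands of $Z_\gamma$, $X_\gamma$, $\mathcal B_\gamma$ and $\mathcal E_\gamma$ are bounded uniformly in $\gamma\in\Gamma_\omega$: the arcsine and arccosine terms are at most $\pi/2$, and the denominators stay bounded below, because $\gamma^2\eta^2/\omega\le 1/4$ and $\xi^2/(\gamma^2\omega)\le 1/4$ on $T$ for admissible $\gamma$. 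Everything else is routine.
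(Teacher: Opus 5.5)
Your argument is correct and matches the paper's proof: for $\omega=4$ there is nothing to prove since $\Gamma_\omega=\{1\}$, and for $\omega>4$ the sign pattern of $V'$ from Lemma~\ref{lem:antisym} gives strict decrease of $V$ on $[2/\sqrt\omega,1]$ and strict increase on $[1,\sqrt\omega/2]$, which you simply make explicit via the mean value theorem together with a continuity check at the endpoints. One slip in step 2: it should read $V'<0$ on $(\gamma,1)$, which is the sign you actually use, not $V'>0$.
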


\begin{proof}
If \(\omega=4\), then \(\Gamma_\omega=\{1\}\), and there is nothing to prove.
If \(\omega>4\), then by Lemma~\ref{lem:antisym}, \(V\) is strictly decreasing
on
\[
\left[\frac{2}{\sqrt\omega},1\right]
\]
and strictly increasing on
\[
\left[1,\frac{\sqrt\omega}{2}\right].
\]
Hence \(V\) attains its unique minimum at \(\gamma=1\).
\end{proof}

\section*{Acknowledgements}
Mohan Mallick gratefully acknowledges the financial support of the Anusandhan National Research Foundation (ANRF), Government of India, under grant no. ANRF/ARGM/2025/002309/MTR. Second author is supported by  National Board of Higher Mathematics grant no. 02011/36/2025/NBHM/RP/9466. 
\section*{Conflict of interest}
Authors have no conflict of interest.
\subsection*{Data availability} Data sharing is not applicable to this article as no datasets were
generated or analyzed during the current study.

\bibliographystyle{plain}
\bibliography{bib/references}

\end{document}